\title{Random-field Solutions to Linear Hyperbolic Stochastic Partial Differential Equations with Variable Coefficients\thanks{The second author is supported by the grant MTM 2012-31192 from the Direcci\'on General de Investigaci\'on, Ministerio de Economia y Competitividad, Spain.  The research leading to this article has been carried out while the second author was on a Estanc\'\i{}a breve program at the University of Ferrara funded by EEBB-I-2013-06919 the Ministerio de Economia y Competitividad, Spain. The financial support and the hospitality by the staff at the University of Ferrara is gratefully acknowledged.}}
\author{Alessia Ascanelli\footnote{\sl Alessia Ascanelli, Dipartimento di Matematica ed Informatica, Universit\`a di Ferrara, Via Machiavelli n.~30, 44121 Ferrara, Italy, e-mail: alessia.ascanelli@unife.it}\ \  and Andr{\'e} S{\"u}\ss\footnote{\sl Andr\'e S\"u\ss, Departament de Probabilitat, L\`ogica i Estad\'istica, Gran Via, 585, 08007 Barcelona, Spain, e-mail: andre.suess@ub.edu}}
\date{}
\newcommand*{\e}{\mathrm{e}}
\newcommand*{\ii}{\mathrm{i}}
\newcommand*{\dbar}{\mbox{\dj}}									
\newcommand*{\scrB}{\ensuremath{\mathscr{B}}}	
\newcommand*{\scrF}{\ensuremath{\mathscr{F}}}	
\newcommand*{\caC}{\ensuremath{\mathcal{C}}}		
\newcommand*{\caD}{\ensuremath{\mathcal{D}}}	  
\newcommand*{\caE}{\ensuremath{\mathcal{E}}}		
\newcommand*{\caF}{\ensuremath{\mathcal{F}}}		
\newcommand*{\caH}{\ensuremath{\mathcal{H}}}
\newcommand*{\caM}{\ensuremath{\mathcal{M}}}
\newcommand*{\caP}{\ensuremath{\mathcal{P}}}		
\newcommand*{\caR}{\ensuremath{\mathcal{R}}}		
\newcommand*{\caS}{\ensuremath{\mathcal{S}}}		
\newcommand*{\bfP}{\ensuremath{\mathbf{P}}}		
\newcommand*{\N}{\mathbb{N}}										
\newcommand*{\Z}{\mathbb{Z}}										
\newcommand*{\R}{\mathbb{R}}										
\newcommand*{\Rd}{{\mathbb{R}^d}}							
\newcommand*{\E}{\mathbb{E}}										
\renewcommand*{\P}{\mathbb{P}}									
\newcommand{\Infkt}[1]{1_{ #1 }}								
\newcommand{\id}{\mathrm{id}}										
\DeclareMathOperator{\supp}{supp}							
\numberwithin{equation}{section}
\theoremstyle{plain}
\newtheorem{lemma}{Lemma}[section]
\newtheorem{theorem}[lemma]{Theorem}
\newtheorem{proposition}[lemma]{Proposition}
\newtheorem{corollary}[lemma]{Corollary}
\newtheorem{assumption}[lemma]{Assumption}
\theoremstyle{definition}
\newtheorem{definition}[lemma]{Definition}
\newtheorem{remark}[lemma]{Remark}
\newtheorem{example}[lemma]{Example}
\definecolor{red}{rgb}{1,0,0}
\begin{document}
\maketitle

\begin{abstract}
In this article we show the existence of a random-field solution to linear stochastic partial differential equations whose partial differential operator is hyperbolic and has variable coefficients that may depend on the temporal and spatial argument. The main tools for this, pseudo-differential and Fourier integral operators, come from microlocal analysis. The equations that we treat are second-order and higher-order strictly hyperbolic, and second-order weakly hyperbolic with uniformly bounded coefficients in space. For the latter one we show that a stronger assumption on the correlation measure of the random noise might be needed. Moreover, we show that the well-known case of the stochastic wave equation can be embedded into the theory presented in this article.
\end{abstract}

{\bf 2010 Mathematics Subject Classification:} Primary: 35L10, 60H15; Secondary: 35L40, 35S30

\bigskip

{\bf Keywords:} stochastic partial differential equations, stochastic wave equation, hyperbolic partial differential equations, fundamental solution, variable coefficients, Fourier integral operators

\section{Introduction}
In the recent years there has been a huge progress in the solution theory to stochastic partial differential equations (SPDEs). A linear SPDE is given by the following equation
\begin{equation}\label{eq:SPDE}
  Lu(t,x) = \gamma(t,x) + \sigma(t,x)\dot{F}(t,x),
\end{equation}
where $L$ is a partial differential operator, $\gamma,\sigma:\R^{1+d}\to\R$ are functions, subject to certain regularity conditions and $F$ is a random noise term that will be described in detail in Section \ref{sec:stochastics}. Due to the singularity of the random noise, the sample paths of $u$ are in most situations not in the domain of the operator $L$. One way to make sense of this equation in the case of constant coefficients is the following: we define the solution to \eqref{eq:SPDE} as a sum of a deterministic term $I_0$ accounting for the initial conditions, a stochastic and a deterministic convolution of the terms on the right-hand side with the fundamental solution $\Lambda$ to the partial differential equation (briefly, PDE in the following) $Lu=0$: 
\begin{align}\label{eq:mildsolutionSPDE}
  u(t,x) = I_0(t,x)	& + \int_0^t\int_\Rd \Lambda(t-s,x-y)\sigma(s,y)M(ds,dy) \notag	\\
										& + \int_0^t\int_\Rd \Lambda(t-s,x-y)\gamma(s,y)dyds, 
\end{align}
where $M$ is the martingale measure derived from the random noise $\dot F$, see Section \ref{sec:stochastics}. Solutions of this type are called \emph{mild solutions} and were introduced in \cite{walsh} and later generalized in \cite{dalang,conusdalang}. Note that the solution $u$ is defined as a random variable for each $(t,x)\in[0,T]\times\Rd$, where $T>0$ is the time horizon of the equation. Due to this feature, we call these solutions \emph{random-field solutions} in contrast to function-valued solutions, which cannot be evaluated in the spatial argument, but only as a Hilbert- or Banach-space valued random element in the temporal argument, see \cite{dapratozabczyk} for that theory.

Many interesting properties of random-field solutions for SPDEs have been studied for the case when the partial differential operator $L$ has constant coefficients, e.g.\ the regularity of the probability measure induced by the solution \cite{pardouxzhang,nualartquer,sanzsuess1}, large deviation principles \cite{ortizsanz,marquezsarra3}, Varadhan estimates \cite{milletsanz,sanzsuess2}, support theorems \cite{milletsanz2000,delgadosanz}, path properties such as H{\"o}lder continuity \cite{sarrasanz,dalangsanz} and much more. See also the references in these works for a more detailed account. Due to the restriction on constant coefficients, the set of concrete examples for random-field solutions to SPDEs is essentially limited to the stochastic heat equation and the stochastic wave equation (possibly with lower order terms). Note furthermore, that in \cite{sanzvuillermot} the existence of a random-field solution to a class of parabolic equations with variable coefficients has been established. 

With the present article, we aim to study the case of hyperbolic equations with variable coefficients which has, to our knowledge, not been considered yet, producing a random-field solution similar to \eqref{eq:mildsolutionSPDE}, and so enlarging the class of SPDEs which admit a random-field solution.

Let us briefly explain the contents of this article. We consider linear SPDEs whose partial differential operators have variable coefficients that may depend on space and time. For these equations we want to derive conditions on the coefficients such that the notion of random-field solution makes sense. The equations that we will consider are general second- and higher-order hyperbolic equations on the whole space $\Rd$. The main result of this paper, Theorem \ref{thm:2ndorderhyperbolic}, shows that if the coefficients of the partial differential operator
\begin{equation}\label{intro1i}
	L=\partial_t^2 - \sum_{j,k=1}^d a_{j,k}(t,x)\partial_{x_j}\partial_{x_k} - \sum_{j=1}^d b_j(t,x)\partial_ {x_j} - c(t,x)
\end{equation}
are smooth and bounded with bounded derivatives of all orders in $x$ and continuous in $t$ (the $a_{j,k}$ have to be differentiable in $t$), and the operator $L$ is of strictly hyperbolic type, i.e.\ it satisfies:
\begin{equation}\label{intro2}
	\sum_{j,k=1}^d a_{j,k}(t,x)\xi_j\xi_k \geq C|\xi|^2,
\end{equation}
for all $t\in[0,T]$ and all $x,\xi\in\Rd$, then the concept of random-field solution for the SPDE \eqref{eq:SPDE} makes sense. The main tools for achieving this objective, e.g.\ pseudo-differential operators and Fourier integral operators, come from microlocal analysis. To our knowledge, this is the first time that their full potential is rigorously applied within the theory of random-field solutions to SPDEs. Note however the case of SPDEs with a pseudo-differential operator in the framework of function-valued solutions, see \cite{tindel}.

The paper is organized as follows.

In Section \ref{sec:stochastics} we review the notions of stochastic integration with respect to martingale measures and random-field solutions to SPDEs. Since, in contrast to the classic references \cite{walsh,dalang}, we do not assume the partial differential operator to have constant coefficients, its fundamental solution is no longer stationary in time and space, i.e.\ it cannot be written as $\Lambda(t-s,x-y)$ as in \eqref{eq:mildsolutionSPDE}, but rather as $\Lambda(t,s,x,y)$. This small difference will have some consequences on the conditions for the existence of random-field solutions to SPDEs with such partial differential operators. The new existence conditions are summarized in Theorem \ref{thm:existenceanduniquenessconstant} which is the main result of the Section.

Section \ref{sec:hSPDE} is devoted to applying the concepts developed in Section \ref{sec:stochastics} to hyperbolic SPDEs. In Section \ref{sec:microlocalanalysis} we present a quick introduction to microlocal analysis, where we gather all the tools necessary for constructing the fundamental solution to an hyperbolic equation. At this point we have to note that the concept of fundamental solutions to PDEs, which is used in the framework of random-field solutions to SPDEs (in Section \ref{sec:stochastics}), is different from the one that is used in microlocal analysis (in Section \ref{sec:microlocalanalysis}). The relation between the two concepts is outlined in Remark \ref{keyrem}:
the fundamental solution in the sense of Section \ref{sec:stochastics} is the Schwartz kernel of the fundamental solution in the sense of Section \ref{sec:microlocalanalysis}. The main result of the section is Proposition \ref{prop:ftofschwartzkernel}, where we calculate the Fourier transform with respect to the second variable of the Schwartz kernel of the fundamental solution.

In the subsequent Section \ref{sec:appendixhyperbolic} we present an overview over the procedure that we will use in the remaining sections to prove the mild solutions of hyperbolic SPDEs are well-defined.
For the proof, we follow the same ideas as in \cite{AC, AC1,AC2}, where fundamental solutions to deterministic hyperbolic PDEs have been obtained (see also the generalization of the construction of the fundamental solution given in the very recent paper \cite{ACor}, that will be needed in the forthcoming paper \cite{ACS} to extend the results of the present paper to hyperbolic equations with coefficients admitting a polynomial growth as $|x|\to\infty$). 

In Section \ref{sec:2ndorderhyperbolic} we show our main result that the operator in \eqref{intro1i} under condition \eqref{intro2} has a fundamental solution that satisfies all the necessary assumptions for the well-definedness of a random-field solution for \eqref{eq:SPDE}. 
Here the procedure is the following: we reduce the second-order equation to a first-order system, for which one can compute the fundamental solution explicitly, see \cite{kumano-go}. From the fundamental solution to the system we compute the fundamental solution to the second-order equation, and show that the conditions on the fundamental solution for the well-definedness of the stochastic and deterministic convolutions, and therefore on the existence of a random-field solution to the SPDE, are fulfilled. We conclude Section \ref{sec:2ndorderhyperbolic} with an example, where we show how the classic stochastic wave equation fits in the theory here presented.

In the subsequent two sections \ref{sec:higherorderhyperbolic} and \ref{sec:weakhyperbolic} we deal with generalizations of the second-order hyperbolic equations treated in Section \ref{sec:2ndorderhyperbolic}.

Section \ref{sec:higherorderhyperbolic} is devoted to strictly hyperbolic equations of higher order $n\in\N$, $n\geq2$; we show that the coefficients have to satisfy similar conditions as in the case of second-order strictly hyperbolic equations in order to obtain the well-definedness of a random-field solution to the SPDE. However, we will show that the higher the order of the equation, the larger is the class of spectral measures that we can allow for.

Finally, in Section \ref{sec:weakhyperbolic} we relax the assumption of the strict hyperbolicity on the partial differential operator and provide an example of an operator of the form \eqref{intro1i} which does not satisfy \eqref{intro2}. We show that, in this case, random-field solutions can only be obtained under stronger conditions, needed to
deal with this degeneracy.

Throughout this article, let for all $\xi\in\Rd$, $|\xi|:= (\sum_{j=1}^d \xi_j^2)^{1/2}$ and $\langle\xi\rangle:=(1+|\xi|^2)^{1/2}$. Let moreover $\alpha$ denote a multiindex with the usual arithmetic o\-pe\-ra\-tions. We will denote partial derivatives with $\partial$. Moreover, we set $D=-\ii\partial$, $\ii$ the imaginary unit, for the sake of Fourier transform. We will denote by $\caC^m(X)$, $\caC_b^m(X)$, $\caC^m_0(X)$, $\caS(X)$, $\caD(X)$, $\caM_b(X)$, $\caS'(X)$, $\caS'_r(X)$ and $\caD'(X)$ the $m$-times continuously differentiable functions, the $m$-times continuously differentiable functions with uniformly bounded derivatives of all orders $\leq m$, the $m$-times continuously differentiable functions with compact support, 
the Schwartz functions, the test functions, the complex-valued measures with finite total variation, the tempered distributions, the tempered distributions with rapid decrease and the distributions on some finite or infinite-dimensional space $X$, respectively. 

A tempered distribution $u$ is in $\caS'_r(\R^d)$ if $\forall k\in\Z$ $\langle\cdot\rangle^ku$ is a bounded distribution; the space of bounded distributions $\mathcal D'_{L^\infty}(\R^d)$ is the dual space of $\mathcal D_{L^1}(\R^d)$ (which is the space of all $C^\infty(\R^d)$ functions $f$ such that $\partial^\alpha f\in L^1(\R^d)$, $\forall\alpha\in \Z^n_+.$). 


We shall denote by $H^r(\R^d)$ the Sobolev space of order $r\geq 0$ on $L^2(\R^d)$. We will use the notation $\N_0:=\N\cup\{0\}$ and $\R^d_*:=\Rd\backslash\{0\}$. Let furthermore $C>0$ be a generic constant, whose value can change from line to line without further notice.

\section*{Acknowledgements} We wish to thank an anonymous referee, for the constructive criticism and the suggestions, aimed at improving the overall quality of the paper.

\section{Stochastic integration with respect to martingale measures and spatially non-homogeneous SPDE}\label{sec:stochastics}
In this section we introduce the framework to treat mild solutions to linear SPDEs similarly as in \eqref{eq:mildsolutionSPDE}. We explain how stochastic integration with respect to martingale measures is defined, collect some conditions on the integrands and provide a theorem for the well-definedness of mild solutions to SPDEs in the case of variable coefficients. The main novelty of this section compared with \cite{dalang,conusdalang} is that we do not make the assumption of spatial homogeneity. The price we pay is that we cannot treat semilinear SPDEs, see the comment at the end of this section. So let us consider the following mild formulation of the SPDE in \eqref{eq:SPDE}
\begin{align}\label{eq:mildsolutionSPDE2}
  u(t,x) = I_0(t,x)	& + \int_0^t\int_\Rd \Lambda(t,s,x,y)\sigma(s,y)M(ds,dy) \\
										& + \int_0^t\int_\Rd \Lambda(t,s,x,y)\gamma(s,y)dyds. \notag
\end{align}
This is the way in which we understand the SPDE \eqref{eq:SPDE}, and in the following we provide conditions to show that each term on the right-hand side of this equality is meaningful. In fact, we call "mild random field solution to \eqref{eq:SPDE}" a family of random variables $u(t,x)$, $(t,x)\in[0,T]\times\R^d$
defined by \eqref{eq:mildsolutionSPDE2}. 

Let in the following $\{F(\phi);\; \phi\in\mathcal{C}_0^\infty(\mathbb{R}_+\times\Rd)\}$ be a Gaussian process with mean zero and covariance functional given by
\begin{equation}
	\E[F(\phi)F(\psi)] = \int_0^\infty\int_\Rd \big(\phi(t)\star\tilde{\psi}(t)\big)(x)\Gamma(dx) dt,
	\label{eq:correlation}
\end{equation}
where $\tilde{\psi}(t,x) := \psi(t,-x)$, $\star$ is the convolution operator in the $x-$variable and $\Gamma$ is a nonnegative, nonnegative definite, tempered measure on $\Rd$. Then \cite[Chapter VII, Th\'{e}or\`{e}me XVIII]{schwartz} implies that there exists a nonnegative tempered measure $\mu$ on $\Rd$ such that $\caF\mu = \Gamma$, where $\caF$ denotes the Fourier transform given for functions $f\in L^1(\Rd)$ by
\begin{equation}\label{eq:definitionfouriertransform}
	(\caF f)(\xi) := \int_\Rd \e^{-\ii x\cdot\xi}f(x)dx,
\end{equation}
where $x\cdot\xi$ denotes the inner product in $\Rd$. We can then extend the Fourier transform to tempered distributions $T\in\caS'(\Rd)$ by the relation
\begin{equation}\label{eq:definitionfouriertransformSD}
	\langle \caF T,\phi\rangle = \langle T, \caF\phi\rangle,
\end{equation}
for all $\phi\in\caS(\Rd)$.

By Parseval's identity, the right-hand side of \eqref{eq:correlation} is equal to
\begin{equation*}
	\E[F(\phi)F(\psi)] = \int_0^{\infty}\int_{\Rd}\caF\phi(t)(\xi)\overline{\caF\psi(t)}(\xi)\mu(d\xi) dt.
\end{equation*}
As explained in \cite{dalangfrangos}, by approximating indicator functions with $\caC^\infty_0$-functions, the process $F$ can be extended to a worthy martingale measure $M=(M_t(A);\; t\in\R_+, A\in\scrB_b(\Rd))$ where $\scrB_b(\Rd)$ denotes the bounded Borel subsets of $\Rd$. The natural filtration generated by this martingale measure will be denoted in the sequel by $(\scrF_t)_{t\geq 0}$.

In the following we shall use \cite{walsh,dalang,conusdalang} as reference for an integration theory with respect to the martingale measure constructed above. Fix $T>0$. For stochastic processes $f$ and $g$, indexed by $(t,x)\in[0,T]\times\Rd$ and satisfying suitable conditions, we define the pre-inner product
\begin{align}
 \langle f,g\rangle_{0}
 & = \E\bigg[\int_0^T\int_\Rd \big(f(s)\star\tilde{g}(s)\big)(x)\Gamma(dx) ds \bigg] \label{eq:norm01}\\
 & = \E\bigg[\int_0^T\int_\Rd \caF f(s)(\xi)\overline{\caF g(s)(\xi)}\mu(d\xi) ds\bigg], \label{eq:norm02}
 \end{align}
where the corresponding semi-norm $\|\cdot\|_0$ is defined in the usual way. Moreover, we define the semi-norm
\[ \|f\|_+^2 := \E\bigg[\int_0^T\int_\Rd \big(|f(s)|\star|\tilde{g}(s)|\big)(x)\Gamma(dx) ds \bigg]. \]

Let $\caE$ denote the set of {\it simple processes} $g$, that is, stochastic processes of the form
\[ g(t,x;\omega) = \sum_{j=1}^m 1_{(a_j,b_j]}(t)1_{A_j}(x)X_j(\omega), \]
for some $m\in\N$, where $0\leq a_j < b_j\leq T$, $A_j\in\scrB_b(\Rd)$ and $X_j$ is a bounded and $\scrF_{A_j}$-measurable random variable for all $1\leq j\leq n$. The stochastic integral of $g$ with respect to the martingale measure $M$, denoted by $g\cdot M$ is given by
\[ (g\cdot M)_t := \sum_{j=1}^m \big(M_{t\wedge b_j}(A_j) - M_{t\wedge a_j}(A_j)\big)X_j, \]
where $x\wedge y := \min\{x,y\}$. One can show by appying the definition that
\begin{equation}\label{eq:isometry}
  \E\big[(g\cdot M)_t^2\big] = \|g\|^2_0,
\end{equation}
for all $g\in\caE$. Following \cite{dalang}, we denote by $\caP_0$ the completion of $\caE$ with respect to $\langle\cdot,\cdot\rangle_0$. Then $\caP_0$ is a Hilbert space consisting of predictable processes which may contain tempered distributions in the $x$-argument (whose Fourier transform are functions, $\P$-almost surely).  The norm in this space is given by the $\|\cdot\|_0$-norm defined above in \eqref{eq:norm02} and for sufficiently smooth elements of $\caP_0$, this norm can be also written as in \eqref{eq:norm01}. Note that $\caP_0$ is not defined as the set of predictable processes $g$ for which $\|g\|_0 < \infty$.
In fact, it can be shown that the latter space is not complete. So we have that $\caP_0$ is the space of all integrable (with respect to $M$) processes and the stochastic integrals are defined as an $L^2(\Omega)$-limit of simple processes via the isometry \eqref{eq:isometry}. In \cite[Lemma 2.2]{sanzsuess1}, it was shown that $\caP_0=L^2_{p}([0,T]\times\Omega,\caH)$,
where here $L^2_p(\ldots)$ stands for the predictable stochastic processes in $L^2(\ldots)$ and $\caH$ is the Hilbert space which is obtained from completing the Schwartz functions with respect to the inner product $\langle\cdot,\cdot\rangle_0$.

On the other hand, we define $\caP_+$ to be the set of all predictable processes for which $\|g\|_+<\infty$. Then
$\caP_+$ is a Banach space and the simple processes are dense in this space, see \cite[Proposition 2.3]{walsh}. Note that since $\|\cdot\|_0\leq\|\cdot\|_+$, we have $\caP_+\subset\caP_0$, and this inclusion may be strict.

Now we describe how to integrate time- and space-dependent integrands of a special form $\Lambda\sigma$ into the SPDE \eqref{eq:SPDE}. Here, $\Lambda$ is the fundamental solution to the associated PDE and $\sigma$ is the coefficient on the right-hand side of the SPDE depending on the time and space parameter. That means we want to make sense of the stochastic integral
\begin{equation}\label{eq:integralLambdasigma}
	\int_0^t\int_\Rd \Lambda(t,s,x,y)\sigma(s,y)M(ds,dy).
\end{equation}
Note that in this integral and throughout this article we write $\Lambda(t,s,x,y)$ although this object will be a distribution in the last argument. This abuse of notation is for the sake of briefness.

With the help of \eqref{eq:isometry} we calculate the second moment of \eqref{eq:integralLambdasigma}, from where we can deduce sufficient conditions for its well-definedness. We will assume in the following, compare with Assumption \ref{one!}, that the spatial Fourier transform of the function $\sigma$ is a complex-valued measure with finite total variation, i.e.\ for all $s\in[0,T]$
\[ |\caF\sigma(\cdot,s)| = |\caF\sigma(\cdot,s)|(\Rd) = \sup_\pi \sum_{A\in\pi} |\caF\sigma(\cdot,s)|(A) < \infty, \]
where $\pi$ is any partition on $\Rd$ into measurable sets $A$, and the supremum is taken over all such partitions. Let throughout the remainder of the article $\nu_s:=\caF\sigma(\cdot,s)$, and let $|\nu_s|$ denote its total variation.

Now we compute the norm on the right-hand of \eqref{eq:isometry}. Using \eqref{eq:isometry}, the definition of convolution between a distribution $\nu_s$ and a function $\caF(\Lambda(t,s,x,\cdot))$, the well-known fact that the Fourier transform of a product is the convolution of the Fourier transforms, and Minkowski's integral inequality we obtain
\begin{align}\label{eq:normbound}
  & \|\Lambda(t,\cdot,x,\ast)\sigma(\cdot,\ast)\|^2_0 \notag\\
  & = \int_0^t\int_\Rd\int_\Rd \Lambda(t,s,x,y)\Lambda(t,s,x,y-z)\sigma(s,y)\sigma(s,y-z)dy\Gamma(dz)ds \notag\\
  & = \int_0^t\int_\Rd \big|\caF(\Lambda(t,s,x,\cdot)\sigma(s,\cdot))(\xi)\big|^2\mu(d\xi)ds \notag \\
  & = \int_0^t\int_\Rd\bigg|\int_\Rd \caF\Lambda(t,s,x)(\xi-\eta)\nu_s(d\eta)\bigg|^2\mu(d\xi)ds \notag\\
  &\leq \int_0^t\bigg(\int_\Rd\bigg(\int_\Rd |\caF\Lambda(t,s,x)(\xi-\eta)|^2\mu(d\xi)\bigg)^{1/2}|\nu_s(d\eta)|\bigg)^2ds\notag\\
  & \leq \int_0^t\bigg(\sup_{\eta\in\Rd} \int_\Rd |\caF\Lambda(t,s,x)(\xi+\eta)|^2 \mu(d\xi)\bigg) |\nu_s|^2ds.
\end{align}
We see that $\caF\sigma(s)$ has to have finite total variation for almost all $s\in[0,T]$ in order that the previous term be finite. We will now also assume that $\sigma(s)\in C_b(\Rd)$ for every $s\in[0,T]$ in order to simplify an argument later. In fact, this condition follows directly from the fact that $\nu_s$ has finite total variation for all $s\in[0,T]$ by considering the inverse Fourier transform (in the distributional sense) of $\nu_s$ and reminding that the Fourier transform of a measure with finite total variation is uniformly continuous, see \cite[Chapter 5, \S 26]{bill}.

Let in the following $\Delta_T$ be the simplex given by $0\leq t\leq T$ and let $0<s<t$. In order for the well-definedness of the stochastic integral, we need to assume the following.

\begin{assumption}\label{one!}
For  $(t,s,x)\in\Delta_T\times\Rd$, let $\Lambda(t,s,x)$ be a deterministic function with values in $\caS'_r(\Rd)$ and let $\sigma$ be a function in $L^2([0,T],C_b(\Rd))$ such that:
 \begin{enumerate}[label=\bfseries (A\arabic{enumi}),ref=\bfseries (A\arabic{enumi})]
    \item\label{ass:A1} the function $(t,s,x,\xi)\mapsto\caF\Lambda(t,s,x)(\xi)$ is measurable, the function $s\mapsto\caF\sigma(s)=\nu_s \in L^2([0,T],\caM_b(\Rd))$, and moreover
 		\begin{equation}\label{eq:condition1}
			\int_0^T\bigg(\sup_{\eta\in\Rd} \int_\Rd |\caF\Lambda(t,s,x)(\xi+\eta)|^2 \mu(d\xi)\bigg) |\nu_s|^2 ds < \infty.
		\end{equation}
    \item\label{ass:A2} $\Lambda$ and $\sigma$ are as in \ref{ass:A1} and
			\begin{align*}
				\lim_{h\downarrow0} \int_0^T		& \bigg(\sup_{\eta\in\Rd} \int_\Rd \sup_{r\in(s,s+h)} |\caF(\Lambda(t,s,x)-\Lambda(t,r,x))(\xi+\eta)|^2 \mu(d\xi)\bigg) |\nu_s|^2ds = 0.
			\end{align*}
  \end{enumerate}
\end{assumption}

In the case where the coefficient $\sigma$ does not depend on the spatial argument, these assumptions take on the following form.

\begin{example}\label{ex:simplecase}
  If $\sigma$ does not depend on the spatial argument, then
  \[\caF\sigma(s) = (2\pi)^d\sigma(s)\delta_0, \]
  where $\delta_0$ is the Dirac delta distribution with total variation $1$. Doing the same computations as in \eqref{eq:normbound}, we arrive at the necessary condition that
  \[ \int_0^T\sigma(s)^2\int_\Rd |\caF\Lambda(t,s,x)(\xi)|^2 \mu(d\xi)ds < \infty, \]
  which is actually weaker than \eqref{eq:condition1} in the sense that there is no supremum over $\eta$.
\end{example}

The reason for the assumption that $\Lambda(t)\in\caS'_r(\Rd)$ is that in this case the Fourier transform in the second spatial argument is a smooth function of slow growth, and the convolution of a distribution in $\caS'_r(\Rd)$ with any other distribution in $\mathcal S'(\R^d)$ is well-defined, see \cite[Chapter VII, \S 5]{schwartz} and \cite{ACSprime} for more detalis. A necessary and sufficient condition for $T\in\caS'_r(\Rd)$ is that each regularization of $T$ with a $\caC^\infty_0$-function is a Schwartz function. This will be true in our case due to Proposition \ref{prop:ftofschwartzkernel} and the fact that the Fourier transform is a bijection on the Schwartz functions, see Lemma \ref{lem:FTfundamentalsolution}.

Notice that Assumption \ref{one!} generalizes to the non spatially homogeneous case the corresponding conditions (H2) and (3.6) in \cite{conusdalang}. 
In the spatially homogeneous case investigated in \cite{dalang,conusdalang}, when $\Lambda$ is the solution of the heat or the wave equation, \cite[Lemma 6.1]{sanzbook} shows that, the corresponding condition \eqref{eq:condition1} is equivalent to
\begin{equation}\label{eq:condition3}
  \int_\Rd \frac{1}{1+|\xi|^2}\mu(d\xi) < \infty.
\end{equation}
So one of our aims in the subsequent sections is to find a similar estimate for \eqref{eq:condition1}, which reads
\begin{equation}\label{eq:condition2}
  \sup_{\eta\in\Rd}\int_\Rd \frac{1}{(1+|\xi+\eta|^2)^\kappa}\mu(d\xi) < \infty,
\end{equation}
for some $\kappa\in(0,1]$. Note that if the correlation measure $\Gamma$ is absolutely continous, then condition \eqref{eq:condition2} is equivalent to \eqref{eq:condition3}, see \cite{peszat}.

 In contrast to the methods used in the proof of \cite[Lemma 6.1]{sanzbook}, \eqref{eq:condition2} will follow easily from a quick investigation of the order of the symbol associated to the fundamental solution using the tools presented in Section \ref{sec:microlocalanalysis}.

We can now prove, similarly to \cite[Theorem 3.1]{conusdalang} that under the two assumptions above, the stochastic integral is well-defined.
\begin{theorem}\label{thm:existencestochasticintegral}
  Under Assumption \ref{one!}, we have that $\Lambda\sigma\in\caP_0$. In particular, the stochastic integral $\int_0^t\int_\Rd \Lambda(t,s,x,y)\sigma(s,y)M(ds,dy)$ is well-defined and
 \begin{align*}
	& \E\big[((\Lambda(t,\cdot,x,\ast)\sigma(\cdot,\ast))\cdot M)_t^2\big] \\
	&	\quad \leq \int_0^t\bigg(\sup_{\eta\in\Rd} \int_\Rd |\caF\Lambda(t,s,x)(\xi+\eta)|^2 \mu(d\xi)\bigg) |\nu_s|^2ds.
\end{align*}
\end{theorem}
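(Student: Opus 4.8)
The plan is to follow the standard Walsh--Dalang strategy, adapted to the non-homogeneous kernel $\Lambda(t,s,x,y)$. First I would establish the $L^2$-isometry bound for simple processes and then extend it by density. The key observation is that the $\|\cdot\|_0$-norm of the integrand $\Lambda(t,\cdot,x,\ast)\sigma(\cdot,\ast)$ has already been computed, or rather bounded, in the displayed calculation \eqref{eq:normbound}: using \eqref{eq:isometry}, Fubini's theorem and the product formula for Fourier transforms one arrives at
\[
\|\Lambda(t,\cdot,x,\ast)\sigma(\cdot,\ast)\|_0^2
\leq \int_0^t\Bigl(\sup_{\eta\in\Rd}\int_\Rd|\caF\Lambda(t,s,x)(\xi+\eta)|^2\mu(d\xi)\Bigr)\Bigl(\int_\Rd|\caF\sigma(s)(\eta)|d\eta\Bigr)^2 ds,
\]
and by Assumption \ref{ass:A1}, specifically \eqref{eq:condition1}, the right-hand side is finite. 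This gives the desired second-moment estimate once we know the integrand actually lies in $\caP_0$; the quoted inequality in the theorem statement then follows immediately from the isometry \eqref{eq:isometry}.

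The part that requires genuine work is showing $\Lambda\sigma\in\caP_0$, i.e. that this (distribution-valued) process can be approximated in the $\|\cdot\|_0$-norm by simple, predictable processes. Here I would mimic \cite[Theorem 3.1]{conusdalang}: regularize $\Lambda(t,s,x,\ast)$ in the spatial variable by convolving with an approximate identity $\psi_n\in\caC_0^\infty(\Rd)$, so that $\Lambda_n(t,s,x,\ast):=\Lambda(t,s,x,\ast)\star\psi_n$ is a genuine function (this is legitimate because $\Lambda(t,s,x)\in\caS'_r(\Rd)$, so the regularization is a Schwartz function), then further discretize $(s,y)$ on a grid to obtain elements of $\caE$. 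Predictability is automatic since $\Lambda$ and $\sigma$ are deterministic. The convergence $\Lambda_n\sigma\to\Lambda\sigma$ in $\|\cdot\|_0$ is where Assumption \ref{ass:A2} enters: the time-regularity condition there is exactly what controls the error introduced by the temporal discretization (replacing $\Lambda(t,s,x)$ by $\Lambda(t,r,x)$ for $r$ in a small interval above $s$), via dominated convergence with the dominating function supplied by \ref{ass:A1}. The spatial regularization error is handled directly on the Fourier side, since $\caF\Lambda_n(t,s,x)(\xi)=\caF\Lambda(t,s,x)(\xi)\,\caF\psi_n(\xi)$ and $\caF\psi_n\to 1$ pointwise boundedly.

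The main obstacle, and the point where the non-homogeneous setting genuinely departs from \cite{conusdalang}, is precisely the verification that the time-shift term in \ref{ass:A2} produces an admissible dominating function so that the limit may be taken inside the integral; in the classical case the kernel depends only on $t-s$ and this is more transparent, whereas here the four-variable dependence must be tracked through the regularization. Once the approximation is in place, everything else --- the isometry, Fubini, the final moment bound --- is routine. I would therefore structure the proof as: (i) recall the isometry \eqref{eq:isometry} and the norm computation \eqref{eq:normbound}; (ii) build the approximating simple processes by space-then-time mollification; (iii) prove $\|\Lambda_n\sigma-\Lambda\sigma\|_0\to 0$ using \ref{ass:A1}--\ref{ass:A2} and dominated convergence; (iv) conclude $\Lambda\sigma\in\caP_0$, hence the stochastic integral exists, and read off the second-moment inequality from \eqref{eq:isometry} and \eqref{eq:normbound}.
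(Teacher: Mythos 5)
Your proposal follows essentially the same route as the paper's proof: mollify $\Lambda$ in space with $\psi_n$ (controlled on the Fourier side via $|\caF\psi_n|\le 1$, $\caF\psi_n\to1$, with \ref{ass:A1} supplying the dominating bound), discretize in time with \ref{ass:A2} controlling that error, and read off the second-moment bound from \eqref{eq:isometry} and \eqref{eq:normbound}. The only cosmetic difference is that the paper does not discretize in the spatial variable: it shows the time-discretized mollification $\Lambda_{n,m}\sigma$ lies in $\caP_+$ (using $\sigma(s)\in L^\infty(\Rd)$ and Leibniz' formula) and then invokes the density of simple processes in $\caP_+\subset\caP_0$, rather than building elements of $\caE$ by an explicit $(s,y)$-grid.
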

\begin{proof}
Fix throughout this proof $(t,x)\in[0,T]\times\Rd$, and let $s\in[0,t)$. Take $\psi\in\caC^\infty_0(\Rd)$ such that $\supp\psi\subseteq B_d(0,1)$ (the unit ball in $\Rd$). Then set for all $n\in\N$, $\psi_n(y):=n^d\psi(ny)$ and $\Lambda_n(t,s,x):=\Lambda(t,s,x)\ast\psi_n$. Then we have $|\caF\psi_n(\xi)|\leq1$, $|\caF\psi_n(\xi)|\to1$ pointwise, and $|\caF\Lambda_n(t,s,x)(\xi)|=|\caF\Lambda(t,s,x)(\xi)||\caF\psi_n(\xi)|$. If we have that $\Lambda_n(t,\cdot,x,\ast)\sigma(\cdot,\ast)\in\caP_0$ for all $n\in\N$, then performing the same steps as in \eqref{eq:normbound} yields
\begin{align*}
  & \|(\Lambda(t,\cdot,x,\ast) - \Lambda_n(t,\cdot,x,\ast))\sigma(\cdot)\|_0^2 \\
  & \quad\leq \int_0^t\int_\Rd\bigg(\int_\Rd |\caF(\Lambda(t,s,x)-\Lambda_n(t,s,x))(\xi+\eta)||\nu_s(d\eta)|\bigg)^2\mu(d\xi) ds \\
  & \quad\leq \int_0^t\int_\Rd\bigg(\int_\Rd | \caF\Lambda(t,s,x)(\xi+\eta)|\cdot |1-\caF\psi_n(\xi+\eta)||\nu_s(d\eta)|\bigg)^2\mu(d\xi) ds;
\end{align*}
the latter term goes to zero as $n\to\infty$, since we have $|\caF\psi_n(\xi)|\to1$, thanks to the bounded convergence theorem: indeed $|1-\caF\psi_n(\xi+\eta)|\leq2$ so repeating the same computations as in \eqref{eq:normbound} we get
\begin{align*}
&\int_\Rd\bigg(\int_\Rd | \caF\Lambda(t,s,x)(\xi+\eta)|\cdot |1-\caF\psi_n(\xi+\eta)||\nu_s(d\eta)|\bigg)^2\mu(d\xi)\\
&\quad\leq  \int_\Rd\bigg(\int_\Rd 2|\caF\Lambda(t,s,x)(\xi+\eta)||\nu_s(d\eta)|\bigg)^2\mu(d\xi)\\
&\quad\leq  4\bigg(\int_\Rd\bigg(\int_\Rd |\caF\Lambda(t,s,x)(\xi+\eta)|^2\mu(d\xi)\bigg)^{1/2}|\nu_s(d\eta)|\bigg)^2\\
&\quad\leq 4\bigg( \sup_{\theta\in\Rd}\int_\Rd |\caF\Lambda(t,s,x)(\xi+\theta)|^2\mu(d\xi)\bigg)|\nu_s|^2\\
\end{align*}
which does not depend on $n$ and is in $L^1[0,T]$ by assumption \ref{ass:A1}.
\\
In order to show that $\Lambda_n(t,\cdot,x,\ast)\sigma(\cdot,\ast)\in\caP_0$, we define
\[ \Lambda_{n,m}(t,s,x,y) := \sum_{j=0}^{2^m-1} \Lambda_n(t,t_m^{j+1},x,y)\Infkt{[t_m^{j},t_m^{j+1})}(s), \]
for all $m\in\N$, where $t_m^j = jT2^{-m}$. Then $\Lambda_{n,m}(t,s,x,\ast)\in\caS(\Rd)$ and
\begin{align*}
  & \|\Lambda_{n,m}(t,\cdot,x,\ast)\sigma(\cdot,\ast)\|^2_+ \\
  & = \int_0^t\int_\Rd\int_\Rd |\Lambda_{n,m}(t,s,x,y)||\Lambda_{n,m}(t,s,x,y-z)||\sigma(s,y)||\sigma(s,y-z)|dy\Gamma(dz)ds \\
  & \leq \int_0^t \|\sigma(s)\|^2_{L^\infty(\Rd)} \bigg(\int_\Rd |\Lambda_{n,m}(t,s,x,y-z)||\Lambda_{n,m}(t,s,x,y)|dy\Gamma(dz)\bigg)ds.
\end{align*}
This is the (only) place, where we have used that $\sigma(s)\in L^\infty(\Rd)$ for almost all $s\in[0,T]$. Now Leibniz' formula \cite[Exercise 26.4]{treves} implies that for each $s\in[0,t)$, the term in the parentheses in the last line of the previous inequality is finite. Moreover, since $ \Lambda_{n,m}(t,\cdot,x,\ast)$ was a step function in $s$, it is also uniformly bounded and the fact that $\sigma(s)\in C_b(\Rd)$ for every $s\in[0,T]$ together with the assumption on $\sigma$ implies the finiteness of this term. Therefore $\Lambda_{n,m}\sigma\in\caP_+$, which implies that there exists a sequence of step functions approximating this object.

The last step in this proof is to show that $\Lambda_{n,m}(t,\cdot,x,\ast)\sigma(\cdot,\ast)$ converges to $\Lambda_n(t,\cdot,x,\ast)\sigma(\cdot,\ast)$ in $\caP_0$ for all $(t,x)\in[0,T]\times\Rd$. We compute using \eqref{eq:normbound}
\begin{align*}
  & \|(\Lambda_{n,m}(t,\cdot,x,\ast)-\Lambda_n(t,\cdot,x,\ast))\sigma(\cdot,\ast)\|_{0}^2\leq  \\
  & \leq \int_0^t\bigg(\sup_{\eta\in\Rd} \int_\Rd |\caF(\Lambda_n(t,s,x)-\Lambda_{n,m}(t,s,x))(\xi+\eta)|^2 \mu(d\xi)\bigg)|\nu_s|^2ds \\
  & \leq \int_0^t\bigg(\sup_{\eta\in\Rd} \int_\Rd \sup_{r\in(s,s+T2^{-m})} |\caF(\Lambda_n(t,s,x)-\Lambda_n(t,r,x))(\xi+\eta)|^2 \mu(d\xi)\bigg)|\nu_s|^2ds,
\end{align*}
which goes to zero by \ref{ass:A2}, which ends the proof.
\end{proof}

Now we treat the pathwise integral in \eqref{eq:mildsolutionSPDE2}. Similar to the stochastic integral we first compute an estimate for its second moment from which we can deduce suitable sufficient conditions for its existence. We assume that the spatial Fourier transform of the coefficient $\gamma(s)$ is a measure with finite total variation, denoted by $\chi_s$. We obtain the following
\begin{align}\label{eq:normcontinuitypathwise}
  \bigg(\int_0^t\int_\Rd \Lambda(t,s,x,y)\gamma(s,y)dyds\bigg)^2
  & \leq T\int_0^t\big(\caF\big(\Lambda(t,s,x,\ast)\gamma(s,\ast)\big)(0)\big)^2ds \notag\\
  & \leq C\int_0^t\bigg(\int_\Rd \caF\Lambda(t,s,x,-\eta)\chi_s(d\eta)\bigg)^2ds \notag\\
  & \leq C\int_0^t\bigg(\sup_{\eta\in\Rd} |\caF\Lambda(t,s,x)(\eta)|^2\bigg)|\chi_s|^2 ds.
\end{align}

In order to give a rigorous meaning to the pathwise integral, we assume the following.

\begin{assumption}\label{three!}
For $(t,s,x)\in\Delta_T\times\Rd$, let $\Lambda(t,s,x)$ be a deterministic function with values in $\caS'_r(\Rd)$ and let 
$\gamma\in L^2([0,T],C_b(\Rd))$
 such that
  \begin{enumerate}[label=\bfseries (A\arabic{enumi}),ref=\bfseries (A\arabic{enumi})]\setcounter{enumi}{2}
    \item\label{ass:A3} the function $(t,s,x,\xi)\mapsto\caF\Lambda(t,s,x)(\xi)$ is measurable, the function $s\mapsto\caF \gamma(s)=\chi_s \in L^2([0,T],\caM_b(\Rd))$, and moreover
		\begin{equation}\label{eq:condition4}
			\int_0^T\bigg(\sup_{\eta\in\Rd} |\caF\Lambda(t,s,x)(\eta)|^2 \bigg) |\chi_s|^2ds < \infty.
		\end{equation}
		  \item\label{ass:A4} Let $\Lambda$ and $\gamma$ be as in \ref{ass:A3}
		\begin{align*}
		\lim_{h\downarrow0} & \int_0^T\bigg(\sup_{\eta\in\Rd} \sup_{r\in(s,s+h)} |\caF(\Lambda(t,s,x)-\Lambda(t,r,x))(\eta)|^2 \bigg)|\chi_s|^2	ds = 0.
		\end{align*}
  \end{enumerate}
\end{assumption}

Similar to Example \ref{ex:simplecase}, we can weaken the assumptions \ref{ass:A3} and \ref{ass:A4} when the coefficient $\gamma$ does not depend on the spatial argument. 

Note that the two conditions \ref{ass:A3} and \ref{ass:A4} coincide with \ref{ass:A1} and \ref{ass:A2} respectively if $\mu=\delta_0$. 

Under Assumption \ref{three!}, the pathwise integral is well-defined and \eqref{eq:normcontinuitypathwise} holds; to show this, it is sufficient to repeat the very same arguments as in the proof of Theorem \ref{thm:existencestochasticintegral}, replacing $\mu$ by $\delta_0$.

We now make a last assumption on the first term $I_0$ in \eqref{eq:mildsolutionSPDE2}, that accounts for the inital conditions.

\begin{assumption}\label{five!}
  \begin{enumerate}[label=\bfseries (A\arabic{enumi}),ref=\bfseries (A\arabic{enumi})]\setcounter{enumi}{4}
		\item\label{ass:A5} For every $(t,x)\in[0,T]\times\Rd$, $I_0(t,x)$ is finite.
  \end{enumerate}
\end{assumption}

With all these preparations we can now state the existence theorem for stochastic partial differential equations which are nonhomogeneous in space.

\begin{theorem}\label{thm:existenceanduniquenessconstant}
  Under Assumptions \ref{one!}, \ref{three!} and \ref{five!}, the random-field solution of the  SPDE \eqref{eq:SPDE}, which is given in \eqref{eq:mildsolutionSPDE2}, makes sense.
\end{theorem}
\begin{proof}
  We calculate the second moment of $u(t,x)$ in \eqref{eq:mildsolutionSPDE2} for any fixed $(t,x)\in[0,T]\times\Rd$ and obtain
  \begin{align*}
    & \E\big[u(t,x)^2\big] \\
    & \leq C\bigg(I_0(t,x)^2 + \E\bigg[\bigg(\int_0^t\int_\Rd \Lambda(t,s,x,y)\sigma(s,y)M(ds,dy)\bigg)^2\bigg] \\
    & \phantom{mmm} + \bigg(\int_0^t\int_\Rd \Lambda(t,s,x,y)\gamma(s,y)dyds\bigg)^2\bigg)\\
    & \leq C\bigg(I_0(t,x)^2 + \int_0^t\sup_{\eta\in\Rd} \int_\Rd |\caF\Lambda(t,s,x)(\xi+\eta)|^2\mu(d\xi)|\nu_s|^2ds \\
    & \phantom{mmm} + \int_0^t\sup_{\eta\in\Rd} |\caF\Lambda(t,s,x)(\eta)|^2|\chi_s|^2ds\bigg),
  \end{align*}
which is finite by assumption, so that $u(t,x)$ is well-defined as a random variable in $L^2(\Omega)$ for every $(t,x)\in[0,T]\times\Rd$.
\end{proof}

Note that if in the previous inequality all the terms on the right-hand side can be uniformly bounded in $t$ and $x$, then we have for the solution that
\[ \sup_{(t,x)\in[0,T]\times\Rd} \E\big[u(t,x)^2\big] < \infty. \]
This condition is essential in order to treat semilinear SPDEs. In fact, in this situation one can reproduce the results from \cite{nualartquer} that if the fundamental solution is a function or a nonnegative distribution, one can incorporate coefficients $\sigma$ and $\gamma$ which depend on the solution $u$. However, if the fundamental solution is only a general distribution as in \cite{conusdalang}, one also needs a stationarity condition which is not satisfied in the case when the partial differential operator has variable coefficients. Therefore we keep our attention to the linear case, because we cannot tell from the methods presented in Section \ref{sec:microlocalanalysis} whether the fundamental solution is a function, a nonnegative distribution or a distribution.


\section{Application to linear hyperbolic SPDEs}\label{sec:hSPDE}
Here we present an application of the integration theory presented in the previous section. We treat a variety of linear hyperbolic SPDEs.

\subsection{Microlocal Analysis}\label{sec:microlocalanalysis}
In this section we collect some results which will be useful to us when we construct the fundamental solution to hyperbolic equations in the sections below. Our main tool will be the Fourier transform $\caF$, which was defined in \eqref{eq:definitionfouriertransform} for all functions $f\in L^1(\Rd)$ and extended to Schwartz distributions in \eqref{eq:definitionfouriertransformSD}. The inverse of the Fourier transform can be given by
\[ (\caF^{-1} f)(x) := (2\pi)^{-d}\int_\Rd \e^{\ii x\cdot\xi}f(\xi)d\xi = (2\pi)^{-d}(\caF f)(-x),  \]
for all $f\in L^1(\Rd)$, and extended to Schwartz distributions, so that for all $T\in\caS'(\Rd)$ we have $\caF^{-1}\caF T=T$.

Using the Fourier transform we can now define Fourier integral operators, that is the operators we will need in order to construct the fundamental solution. For their definition we need the following ingredients: symbols and phase functions.

\begin{definition}[Symbols]\label{CPsymbol}
Let $m\in\R$. A $\caC^\infty(\Rd\times\R_*^d)$-function $p$ is called a \emph{symbol} of class $S^m$ if for every $R>0$ and for all $\alpha,\beta\in\N^d_0$ there exists a constant $C_{R,\alpha,\beta}>0$ such that
\[ |\partial^{\alpha}_{\xi}\partial^{\beta}_{x}p(x,\xi)| \leq C_{R,\alpha,\beta}\langle\xi\rangle^{m-|\alpha|}, \]
for all $x,\xi\in\Rd$ with $|\xi|\geq R$. We say that $p$ is a symbol of order $m$.
 \end{definition}		
We can then define $S^{-\infty}:=\cap_{m\in\R}S^m$ and $S^{+\infty}:=\cup_{m\in\R}S^m$; we trivially have that for every $m_1\leq m_2$ it holds $S^{-\infty}\subset S^{m_1}\subset S^{m_2}\subset S^{+\infty}$.
The space $S^m$ endowed with the family of seminorms $(|\cdot|_{l,R}^{(m)};l\in\N_0, R>0)$ defined by
		\[ |p|_{l,R}^{(m)} := \max_{|\alpha+\beta|\leq l}\sup_{x\in\Rd,\; |\xi|\geq R} |\partial^{\alpha}_{\xi}\partial^{\beta}_{x}p(x,\xi)|\langle\xi\rangle^{-m+|\alpha|} \]
becomes a Fr\'echet space and for any $p\in S^m$, we have by definition
\begin{equation}\label{eq:pointwiseestimateold}
	|\partial^{\alpha}_{\xi}\partial^{\beta}_{x}p(x,\xi)| \leq |p|^{(m)}_{|\alpha+\beta|,R}\langle\xi\rangle^{m-|\alpha|},\qquad \forall x\in\R^d,\ |\xi|\geq R
\end{equation}
where $|p|^{(m)}_{|\alpha+\beta|,R}$ is the smallest constant assuring \eqref{eq:pointwiseestimateold}. In this paper we denote the special case $|p|^{(m)}_{l,1}$ by $|p|^{(m)}_{l}$.

\begin{definition}[Asymptotic expansion]
  Let $(p_j)_{j\in\N}$ be a sequence of symbols $p_j\in S^{m_j}$, where $(m_j)_{j\in\N}$ is a nonincreasing sequence with $m_j\to-\infty$ as $j\to\infty$. Then we say that a symbol $p\in S^m$ has the \emph{asymptotic expansion} $p \sim \sum_{j=1}^\infty p_j$,
  if for any integer $n\in\N$
  \begin{equation}\label{eq:assexp2}
		p - \sum_{j=1}^{n-1}p_j \in S^{m_n}.
  \end{equation}
\end{definition}

Note that this concept does not imply the convergence of the formal series $\sum_{j=1}^\infty p_j$ in any sense, although the order of the difference in \eqref{eq:assexp2} goes to $-\infty$.

It is possible to show, see \cite[Theorem 4.2 page 152]{chazpir}, that every symbol $p\in S^m$ is uniquely determined (modulo an element of $S^{-\infty}$) by its formal series.

\begin{remark} An equivalent (modulo $S^{-\infty}(\Rd\times\R_\ast^d)$) definition of the class of symbols is the following: a $\mathcal C^\infty(\R^{2d})$-function $p$ belongs to $S^m(\R^{2d})$, $m\in\R$, if for all $\alpha,\beta\in\N_0^d$ there exists $C_{\alpha,\beta}>0$ such that 
$$
|\partial^{\alpha}_{\xi}\partial^{\beta}_{x}p(x,\xi)| \leq C_{\alpha,\beta}\langle\xi\rangle^{m-|\alpha|}, \forall x,\xi\in\Rd.
$$
Indeed, clearly $S^m(\R^{2d})\subset S^m(\R^{d}\times\R_\ast^d)$; conversely if $p\in S^m(\R^{d}\times\R_\ast^d)$ we can take a cut-off function $\chi\in C^\infty(\R^d)$ such that $\chi(\xi)=0$ for $|\xi|\leq1/2$ and $\chi(\xi)=1$ for $|\xi|\geq1$, and write it as $p=\chi p+(1-\chi)p=q+r,$ where $q=\chi p\in S^m(\R^{2d})$ and $r=(1-\chi)p\in S^{-\infty}(\R^d\times\R_\ast^d)$.
\\
$S^m(\R^{2d})$ is a Fr\'echet space with seminorms 
		\[ |p|_{l}^{(m)} := \max_{|\alpha+\beta|\leq l}\sup_{x,\xi\in\Rd} |\partial^{\alpha}_{\xi}\partial^{\beta}_{x}p(x,\xi)|\langle\xi\rangle^{-m+|\alpha|} \]
and for any $p\in S^m(\R^{2d})$ we have 
\begin{equation}\label{eq:pointwiseestimate}|\partial^{\alpha}_{\xi}\partial^{\beta}_{x}p(x,\xi)| \leq |p|^{(m)}_{|\alpha+\beta|}\langle\xi\rangle^{m-|\alpha|},\qquad \forall x,\xi\in\R^d.
\end{equation}
\end{remark}

\begin{definition}[Phase functions]\label{def:phasefunction}
	A \emph{phase function} is a $\caC^\infty$-function $\varphi:\R^{d_1}\times\R^{d_2}\to\R$ that is homogeneous of degree one in the second argument, i.e.\ $\varphi(x,t\xi)=t\varphi(x,\xi)$ for all $t>0$, and $\nabla_{x,\xi}\varphi(x,\xi)\neq0$ in $\R^{d_1}\times\R_*^{d_2}$.
\end{definition}

\begin{definition}[Oscillatory integral distribution]
  Let $\varphi$ be a phase function and $p\in S^m$. Then the \emph{oscillatory integral distribution} of $\e^{\ii\varphi(x,\cdot)}p(x,\cdot)$ is the distribution defined for all test functions $v\in\caD(\R^{d_1})$ by
	\begin{equation}\label{eq:oscillatoryintegral}
		\bigg\langle O_S-\int_{\R^{d_2}} \e^{\ii\varphi(\cdot,\xi)}p(\cdot,\xi)\dbar\xi,v\bigg\rangle = \int_{\R^{d_2}\times\R^{d_1}} \e^{\ii\varphi(x,\xi)}p(x,\xi)v(x)dx\dbar\xi,
	\end{equation}
where $\dbar\xi := (2\pi)^{-d_2}d\xi$ and the integral on the right hand side is convergent at least in the sense of \emph{oscillatory integrals}, see \cite{hormander}.
\end{definition}

With all this we can now define the so-called \emph{Fourier integral operators} and the subclass of \emph{pseudo-differential operators}.

\begin{definition}[Fourier integral operators]\label{def:FIO}
Let $\phi$ be a $\caC^\infty$-function on $\Rd\times\Rd$, homogeneous of degree one with respect to $\xi$ and $p\in S^m$. A \emph {Fourier integral operator} $P_\phi:\caS(\Rd)\to\caS(\Rd)$ with phase function $\phi$ and symbol $p$ is defined by
	\begin{align} \label{eq:definitionFIO}
		(P_\phi v)(x)	&	= \int_\Rd \e^{\ii\phi(x,\xi)}p(x,\xi) \caF v(\xi)\dbar\xi 	= \int_\Rd\int_\Rd \e^{\ii\phi(x,\xi)-\ii y\cdot\xi}p(x,\xi)v(y)dy\dbar\xi,
	\end{align}
	for all $v\in\caS(\Rd)$. We will write $P_\phi=P_\phi(x,D_x)=p_\phi(x,D_x)$ to denote a Fourier integral operator with phase function $\phi$ and symbol $p$.
\end{definition}

Note that strictly speaking, $\phi$ in the above definition is not a phase function in the sense of Definition \ref{def:phasefunction}, but the function $\varphi(x,y,\xi):=\phi(x,\xi)-y\cdot\xi$ is indeed a phase function both with respect to the three arguments $(x,y,\xi)$ and with respect to the two arguments $(y,\xi)$. Therefore the oscillatory integral converges and we will refer in the following to $\phi$ as a ``phase function``.

Now we provide a few examples for Fourier integral operators, which will be used throughout this article.

\begin{example}\label{ex:symbols}
With the basic choice of $\phi(x,\xi)=x\cdot\xi$ as phase function, we have
\begin{equation}\label{pseudodef}
(P_\phi u)(x) = \int_\Rd \e^{\ii x\cdot\xi}p(x,\xi)\caF u(\xi)\dbar\xi = \int_\Rd\int_\Rd \e^{\ii (x-y)\cdot\xi}p(x,\xi)u(y)dy\dbar\xi.
\end{equation}
Operators of the form \eqref{pseudodef} are called \emph{pseudo-differential operators}; given a symbol $p\in S^m$ we denote by $P(x,D_x)=p(x,D_x)$ a pseudo-differential operator with symbol $p$, omitting in the notation the dependence on the phase $x\cdot\xi.$
\end{example}	
\begin{example}
The operators $\langle D_x\rangle$ and $|D_x|$ which are defined for all $f\in\caS(\Rd)$ by $\langle D_x\rangle^2f := 1+\sum_{j=1}^d \partial_{x_j}^2 f$ and $|D_x|^2f := \sum_{j=1}^d \partial_{x_j}^2 f$
are the pseudo-differential operators with the symbols $p(x,\xi) = \langle\xi\rangle$ and $p(x,\xi) = |\xi|$ respectively. Both are of order $1$.
\end{example}

Throughout all the article we are going to write, for the sake of brevity, FIO instead of Fourier integral operator, and PDO instead of pseudo-differential operator.

\begin{definition}\label{def:adjoint}
Given a FIO $P_\phi=p_\phi(x,D_x)$, we can define the adjoint $P_\phi^\ast$ of $P$ by $\langle P_\phi u,v\rangle:=\langle u,P_\phi^*v\rangle$ for all $u,v\in\caS(\Rd)$, and one can show that the adjoint $P_\phi^\ast$ has phase function $-\phi$ and symbol with the following asymptotic expansion:
\begin{equation}\label{eq:defp^*}
	p^*(x,\xi) = \sum_{\alpha\in\N_0^d} \frac{(-1)^{|\alpha|}}{\alpha!} \overline{\partial_{\xi}^{\alpha}\partial_{x}^{\alpha}p(x,\xi)}.
\end{equation}
\end{definition}

This allows us to generalize FIOs to a larger domain.

\begin{definition}\label{def:pseuDOonSchwartz}
Let $P_\phi=p_\phi(x,D_x)$ be a FIO on $\caS(\Rd)$. We can extend $P_\phi$ to the tempered distributions $T\in\caS'(\Rd)$ by defining for all $v\in\caS(\Rd)$
\[ \langle P_\phi T,v\rangle:=\langle T,P_\phi^\ast v\rangle.\]
\end{definition}

Recall that the \emph{Schwartz kernel} of a linear operator $A:\caD(\Rd)\to\caD'(\Rd)$ is the distribution $K_{A}\in\caD'(\Rd\times\Rd)$ given for all $u,v\in\caD(\Rd)$ by
\[ \langle K_{A},u\otimes v\rangle  = \langle Av,u\rangle.\] 

We see directly from the definition of FIO and Fubini's theorem w.r.t.\ $dx$ and $dy\dbar\xi$ above that for all $u,v\in\caS(\Rd)$
\begin{align*}
  \langle P_\phi v,u\rangle
  & = \int_\Rd (P_\phi v)(x) u(x) dx \\
  & = \int_\Rd \bigg(\int_{\Rd\times\Rd} \e^{\ii\phi(x,\xi)-\ii y\cdot\xi}p(x,\xi)v(y)dy\dbar\xi\bigg)u(x)dx \\
	& = \int_{\Rd\times\Rd}\int_\Rd \e^{\ii\phi(x,\xi)-\ii y\cdot\xi}p(x,\xi)u(x)v(y)dxdy\dbar\xi \\
	& = \left\langle O_S-\int_\Rd \e^{\ii\phi(x,\xi)-\ii y\cdot\xi}p(x,\xi)\dbar\xi,u\otimes v \right\rangle.
\end{align*}
This implies that the Schwartz kernel of a FIO $P_\phi$ is given by
\[ K_{P_\phi}(x,y) = O_S-\int_\Rd \e^{\ii\phi(x,\xi)-\ii y\cdot\xi}p(x,\xi)\dbar\xi. \]
Furthermore we see that for every $x\in\Rd$, $K_{P_\phi}(x,\cdot)\in\caS'(\Rd)$. This observation allows us to compute the Fourier transform in the second argument of the Schwartz kernel of a FIO for every $x\in\Rd$ fixed.

\begin{proposition}\label{prop:ftofschwartzkernel}
  Let $P_\phi$ be a FIO with symbol $p$ and let $K_{P_\phi}=(K_{P_\phi}(x,\cdot);x\in\Rd)$ denote its Schwartz kernel. Then the Fourier transform in the second argument of its Schwartz kernel,  $\caF_{y\mapsto\eta}K_{P_\phi}(x,\cdot)$, is given by
  \begin{equation}\label{eq:ftofschwartzkernel}
		(\caF_{y\mapsto\eta}K_{P_\phi}(x,\cdot))(\eta) = \e^{\ii\phi(x,-\eta)}p(x,-\eta).
  \end{equation}
\begin{proof}
Let $u,v\in\caS(\Rd)$. First we note that due to \eqref{eq:definitionfouriertransformSD}, the Fourier transform of $K_{P_\phi}(x,\cdot)$ is defined for all fixed $x\in\Rd$ by
\[  \big(\caF_{y\mapsto\eta}K_{P_\phi}(x,\cdot)\big)v = K_{P_\phi}(x,\cdot)\big(\caF_{\eta\mapsto y}v\big). \]
We compute using the second representation of the Schwartz kernel in \eqref{eq:definitionFIO}
\begin{align*}
	\langle \caF_{y\mapsto \eta}K_{P_\phi}v,u\rangle
	& = \int_\Rd \left(\big(\caF_{y\mapsto \eta}K_{P_\phi}(x,\cdot)\big)v\right) u(x)dx \\
	& = \int_\Rd \left(K_{P_\phi}(x,\cdot)\big(\caF_{\eta\mapsto y}v\big)\right)u(x) dx \\
	& = \int_\Rd\int_\Rd\int_\Rd \e^{\ii\phi(x,\xi)-\ii\xi\cdot y}p(x,\xi)\big(\caF_{\eta\mapsto y}v\big)(y)dy\dbar\xi u(x)dx \\
	& = \int_\Rd\int_\Rd \e^{\ii\phi(x,\xi)}p(x,\xi)\int_\Rd \e^{\ii y\cdot(-\xi)}\big(\caF_{\eta\mapsto y}v\big)(y) \dbar y d\xi u(x) dx \\
	& = \int_\Rd\int_\Rd \e^{\ii\phi(x,\xi)}p(x,\xi) v(-\xi) d\xi u(x)dx \\
	& = \int_\Rd\int_\Rd \e^{\ii\phi(x,-\eta)}p(x,-\eta) u(x)v(\eta) d\eta dx,
\end{align*}
where we have used throughout the calculation that $v$ and its Fourier transform are Schwartz functions, $K_{P_\phi}$ has a pointwise interpretation in $x$, and in the last line we have used the change of variable $\xi\mapsto-\eta$.
\end{proof}
\end{proposition}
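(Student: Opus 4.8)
The plan is to verify \eqref{eq:ftofschwartzkernel} by testing both sides against an arbitrary pair of Schwartz functions $u,v\in\caS(\Rd)$, using the fact that the Fourier transform on $\caS'(\Rd)$ is defined by duality via \eqref{eq:definitionfouriertransformSD}. First I would unravel the definition: for fixed $x$, the distribution $\caF_{y\mapsto\eta}K_{P_\phi}(x,\cdot)$ acts on a test function $v$ by $K_{P_\phi}(x,\cdot)(\caF_{\eta\mapsto y}v)$, and then I pair the resulting object in $x$ against $u$, obtaining the triple integral coming from the second (oscillatory-integral) representation of the Schwartz kernel in \eqref{eq:definitionFIO}, namely $\int_\Rd\int_\Rd\int_\Rd \e^{\ii\phi(x,\xi)-\ii\xi\cdot y}p(x,\xi)(\caF_{\eta\mapsto y}v)(y)\,dy\,\dbar\xi\,u(x)\,dx$.

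Next I would collapse the inner $y$-integral: the integral $\int_\Rd \e^{-\ii y\cdot\xi}(\caF_{\eta\mapsto y}v)(y)\,\dbar y$ is, up to normalization conventions, just the Fourier inversion formula applied to $\caF v$, so it evaluates to $v(-\xi)$. This leaves $\int_\Rd\int_\Rd \e^{\ii\phi(x,\xi)}p(x,\xi)v(-\xi)\,d\xi\,u(x)\,dx$, and a change of variables $\xi\mapsto-\eta$ turns this into $\int_\Rd\int_\Rd \e^{\ii\phi(x,-\eta)}p(x,-\eta)u(x)v(\eta)\,d\eta\,dx$, which is exactly the pairing of $\e^{\ii\phi(x,-\eta)}p(x,-\eta)$ (viewed as a distribution in $\eta$, parametrized by $x$) against $u\otimes v$. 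Since $u,v$ were arbitrary, this proves the identity.

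The only delicate point — and the step I would be most careful about — is the justification of the interchanges of integration. The $\xi$-integral is in general only an oscillatory integral (it need not converge absolutely), so strictly speaking one must either work with the regularized/integration-by-parts representation of the oscillatory integral discussed before Definition~\ref{def:pseuDOonSchwartz}, or invoke that $v$ and $\caF v$ are Schwartz functions so that, after performing the $y$-integration first, all subsequent integrands decay rapidly and Fubini applies. The key facts I would lean on are: (i) $\caF$ is a bijection on $\caS(\Rd)$, so $\caF_{\eta\mapsto y}v\in\caS(\Rd)$ and the inversion formula holds classically; (ii) $K_{P_\phi}(x,\cdot)$ has a genuine pointwise meaning in $x$, as noted right before the proposition, which legitimizes pulling the $x$-integral outside; and (iii) the polynomial symbol bound \eqref{eq:pointwiseestimate} combined with the rapid decay of $v(-\xi)$ makes the final double integral absolutely convergent. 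With these in hand the computation is a routine, if bookkeeping-heavy, chain of equalities, so I expect no genuine obstacle beyond tracking the Fourier-transform normalization constant $(2\pi)^{-d}$ hidden in $\dbar\xi$ and $\dbar y$.
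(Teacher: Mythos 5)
Your proposal is correct and follows essentially the same route as the paper's proof: unravel the duality definition of $\caF$ on $\caS'(\Rd)$, insert the oscillatory-integral representation of $K_{P_\phi}(x,\cdot)$, collapse the $y$-integral by Fourier inversion to get $v(-\xi)$, and change variables $\xi\mapsto-\eta$. Your additional care about justifying the Fubini interchanges and the oscillatory-integral convergence is exactly the (tacit) content of the paper's closing remark that $v$ and $\caF v$ are Schwartz and that $K_{P_\phi}$ is pointwise in $x$, so no discrepancy remains.
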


Applying this proposition, we can show the following:

\begin{lemma}\label{lem:FTfundamentalsolution}
Let $P_\phi$ be a FIO with symbol $p$, and let $K_{P_\phi}$ denote its Schwartz kernel. Then, for every $x\in\Rd$, $K_{P_\phi}(x,\cdot)\in \caS'_r(\Rd)$.
\begin{proof}
Fix $x\in\Rd$ and $\psi\in\caD(\Rd)$. We know by \cite[p.\ 244/245]{schwartz} that the regularization of $K_P$ with a $\caC_0^\infty$-function $K_{P_\phi}(x,\cdot)\star\psi$ is an infinitely differentiable function of slow growth. We show now that $K_{P_\phi}(x,\cdot)\star\psi$ is even a Schwartz function; this implies the assertion, again by \cite[p.\ 244/245]{schwartz}. For this we take the Fourier transform of $K_{P_\phi}(x,\cdot)\star\psi$ (in the sense of distributions) and using \cite[Theorem 1.5.3(2)]{kumano-go} and Proposition \ref{prop:ftofschwartzkernel}, we conclude that
\[ \caF_{y\mapsto\eta}(K_{P_\phi}(x,\cdot)\star\psi)(\eta) = \caF_{y\mapsto\eta}K_{P_\phi}(x,\eta)\caF\psi(\eta) = \e^{\ii\phi(x,-\eta)}p(x,-\eta)\caF\psi(\eta). \]
The function of the right-hand side of the previous equality is obviously in $\caC^\infty(\Rd)$ with respect to $\eta$. The fact that $\phi$ is of order $1$ in $\eta$, $p$ is of finite order in $\eta$ and $\caF\psi$ is a Schwartz function imply that the function $\eta\mapsto\caF_{y\mapsto\eta}(T(x)\star\psi)(\eta)$ is a Schwartz function, and hence its inverse Fourier transform too. This finishes the proof.
\end{proof}
\end{lemma}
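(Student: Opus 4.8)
The plan is to invoke the classical criterion of L.\ Schwartz for membership in $\caS'_r(\Rd)$: a tempered distribution $T$ lies in $\caS'_r(\Rd)$ if and only if every regularization $T\star\psi$ with $\psi\in\caD(\Rd)$ is a Schwartz function. So I would fix $x\in\Rd$; since it was already observed (just before the statement) that $K_{P_\phi}(x,\cdot)\in\caS'(\Rd)$, the only thing left to prove is that $K_{P_\phi}(x,\cdot)\star\psi\in\caS(\Rd)$ for an arbitrary $\psi\in\caD(\Rd)$.

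The natural next step is to pass to the Fourier side. Using the exchange formula for the Fourier transform of the convolution of a test function with a tempered distribution, together with Proposition \ref{prop:ftofschwartzkernel}, one obtains
\[
	\caF_{y\mapsto\eta}\big(K_{P_\phi}(x,\cdot)\star\psi\big)(\eta) = \e^{\ii\phi(x,-\eta)}\,p(x,-\eta)\,\caF\psi(\eta).
\]
Because $\caF$ and $\caF^{-1}$ are bijections of $\caS(\Rd)$ onto itself, it suffices to verify that the right-hand side is a Schwartz function of $\eta$; then $K_{P_\phi}(x,\cdot)\star\psi=\caF^{-1}(\cdots)$ is Schwartz as well, which gives the claim.

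For that verification I would run a symbol-type estimate on the product. The factor $\eta\mapsto\e^{\ii\phi(x,-\eta)}$ has modulus one since $\phi$ is real-valued, and since $\phi(x,\cdot)$ is smooth away from the origin and positively homogeneous of degree one, each of its $\eta$-derivatives of order $\geq1$ is homogeneous of degree $\leq0$, hence bounded for $|\eta|\geq1$; a Leibniz/Fa\`a di Bruno expansion then yields $|\partial_\eta^\alpha\e^{\ii\phi(x,-\eta)}|\leq C_\alpha\langle\eta\rangle^{|\alpha|}$. Combined with the symbol bounds $|\partial_\eta^\beta p(x,-\eta)|\leq C_\beta\langle\eta\rangle^{m-|\beta|}$, where $m$ is the order of $p$, the Leibniz rule shows that $\eta\mapsto\e^{\ii\phi(x,-\eta)}p(x,-\eta)$ has all derivatives bounded by a fixed power of $\langle\eta\rangle$. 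Multiplying by $\caF\psi\in\caS(\Rd)$, whose derivatives decay faster than every polynomial, the product together with all its derivatives decays rapidly, so $\e^{\ii\phi(x,-\eta)}p(x,-\eta)\caF\psi(\eta)\in\caS(\Rd)$. Hence $K_{P_\phi}(x,\cdot)\star\psi\in\caS(\Rd)$, and as $\psi\in\caD(\Rd)$ was arbitrary, $K_{P_\phi}(x,\cdot)\in\caS'_r(\Rd)$.

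The main obstacle is essentially the bookkeeping in this last step: one must make sure that the polynomial growth produced by differentiating the oscillatory factor $\e^{\ii\phi(x,-\eta)}$, and by the symbol $p$, is always dominated by the super-polynomial decay of $\caF\psi$. This is guaranteed precisely because $\phi$ is of order one and $p$ of finite order in $\eta$ while $\caF\psi$ is Schwartz; alternatively, this part can be abbreviated by quoting the mapping properties of FIOs on $\caS(\Rd)$ (as in Kumano-go), but the direct estimate keeps the proof self-contained.
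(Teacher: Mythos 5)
Your proposal is correct and follows essentially the same route as the paper: you invoke Schwartz's characterization of $\caS'_r(\Rd)$ via regularizations, compute $\caF\big(K_{P_\phi}(x,\cdot)\star\psi\big)(\eta)=\e^{\ii\phi(x,-\eta)}p(x,-\eta)\caF\psi(\eta)$ using the exchange formula and Proposition \ref{prop:ftofschwartzkernel}, and conclude that this is a Schwartz function because $\phi$ has order one, $p$ has finite order and $\caF\psi$ is Schwartz. The only difference is that you spell out the Leibniz/Fa\`a di Bruno symbol estimates that the paper leaves implicit, which makes the argument more self-contained but does not change its substance.
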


For the construction of the fundamental solution, we need to know how to multiply PDOs with FIOs. 

\begin{proposition}[\cite{kumano-go}, Theorems 10.2.1 and 10.2.2]\label{prop:productsPDOFIO}
	Let $P_\phi$ be a FIO with symbol $p\in S^{m_1}$ and let $Q$ be a PDO with symbol $q\in S^{m_2}$. Then $P_\phi Q$ and $QP_\phi$ are FIOs with phase function $\phi$ and symbols $r_1$ and $r_2$ (of order $m_1+m_2$) respectively, where $r_1$ and $r_2$ have asymptotic expansions
	\begin{align*}
		r_1(x,\xi) & \sim \sum_{\alpha\in\N_0^d} \frac{1}{\alpha!} \partial^\alpha_\xi\left(p(x,\xi)D^\alpha_x q(\tilde{\nabla}_\xi\phi(x;\xi,\xi'),\xi')\right)\bigg|_{\xi'=\xi}
	  \intertext{and}
	  r_2(x,\xi) & \sim \sum_{\alpha\in\N_0^d} \frac{1}{\alpha!} D^\alpha_{x'}\left(\partial^\alpha_{\xi'} p(x,\tilde{\nabla}_x\phi(x,x';\xi))q(x',\xi)\right)\bigg|_{x'=x},
	\end{align*}
	where \begin{align*}
  \tilde{\nabla}_x\phi(x,x';\xi) 		& = \int_0^1 \nabla_x\phi(x' + \theta(x-x'),\xi)d\theta, \\
	\tilde{\nabla}_\xi\phi(x;\xi',\xi) & = \int_0^1 \nabla_\xi\phi(x,\xi + \theta(\xi'-\xi))d\theta,
\end{align*}
respectively denote the gradient's mean value in the convex hull of $x,x'$ and $\xi,\xi'$.
\end{proposition}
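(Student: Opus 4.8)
The result is precisely Theorems 10.2.1 and 10.2.2 in \cite{kumano-go}, so the plan is only to recall the mechanism of the proof. I would treat $P_\phi Q$ first; the composition $QP_\phi$ is entirely parallel, with the phase linearized in the $x$-variable rather than in the covariable, which is why $r_2$ involves $\tilde{\nabla}_x\phi$ and derivatives of $p$ in the covariable instead of $\tilde{\nabla}_\xi\phi$ and spatial derivatives of $q$. The first step is to write the composition as an iterated oscillatory integral: inserting the pseudo-differential representation \eqref{pseudodef} of $Qv$ into the first representation in \eqref{eq:definitionFIO} of $P_\phi$ gives, for $v\in\caS(\Rd)$,
\begin{equation*}
  (P_\phi Q v)(x) = \int_\Rd\int_\Rd\int_\Rd \e^{\ii(\phi(x,\xi)-y\cdot\xi+y\cdot\eta)}\, p(x,\xi)\, q(y,\eta)\, \caF v(\eta)\, dy\,\dbar\xi\,\dbar\eta,
\end{equation*}
understood, for each fixed $\eta$, as an oscillatory integral in $(y,\xi)$ of the type \eqref{eq:oscillatoryintegral}.

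The heart of the argument is to turn this into $\int_\Rd \e^{\ii\phi(x,\eta)} r_1(x,\eta)\caF v(\eta)\dbar\eta$ for a suitable symbol $r_1$. To this end I would linearize the phase of $P_\phi$ around $\xi=\eta$ via the fundamental theorem of calculus, $\phi(x,\xi)=\phi(x,\eta)+(\xi-\eta)\cdot\tilde{\nabla}_\xi\phi(x;\xi,\eta)$ (this is exactly why $\tilde{\nabla}_\xi\phi$ was introduced before the statement), so that the exponent becomes $\phi(x,\eta)+(\xi-\eta)\cdot(\tilde{\nabla}_\xi\phi(x;\xi,\eta)-y)$. After the two admissible substitutions $\xi\mapsto\eta+\xi$ and $y\mapsto\tilde{\nabla}_\xi\phi(x;\eta+\xi,\eta)-y$ inside the oscillatory integral one is left with a model integral
\begin{equation*}
  r_1(x,\eta)=O_S-\!\int_\Rd\!\int_\Rd \e^{\ii y\cdot\xi}\, p(x,\eta+\xi)\, q\big(\tilde{\nabla}_\xi\phi(x;\eta+\xi,\eta)-y,\,\eta\big)\, dy\,\dbar\xi,
\end{equation*}
which exhibits $P_\phi Q$ as a FIO with phase $\phi$. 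It then remains to prove $r_1\in S^{m_1+m_2}$, and for this I would invoke Kumano-go's asymptotic-expansion lemma for oscillatory integrals of the form $O_S-\int\int \e^{\ii y\cdot\xi}b(y,\xi)\,dy\,\dbar\xi$ with a double-symbol amplitude $b$, whose expansion is $\sum_\alpha\frac{1}{\alpha!}\partial_\xi^\alpha D_y^\alpha b(y,\xi)\big|_{y=0,\,\xi=0}$ with the remainder left after discarding the terms with $|\alpha|<N$ lying $N$ orders lower. Applied to the amplitude above and reorganized as a double symbol frozen on the diagonal $\xi'=\xi$, this identifies the expansion with the series in the statement, and the pointwise bound \eqref{eq:pointwiseestimate} for the leading term gives the order $m_1+m_2$.

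The step I expect to be the main obstacle is the oscillatory-integral bookkeeping: justifying the changes of variables and the term-by-term Taylor expansion under the merely conditionally convergent integral sign, and — more seriously — establishing the symbol estimates for $r_1$ and for every remainder uniformly in $x$. For the latter one needs that $(\xi,\eta)\mapsto\tilde{\nabla}_\xi\phi(x;\xi,\eta)$ is bounded together with all its derivatives and behaves like a symbol of order $0$, which follows from the homogeneity of degree one of $\phi$ in its covariable together with the $\caC_b^\infty$-type bounds on the coefficients; this is what lets the composition $q(\tilde{\nabla}_\xi\phi(\cdots),\eta)$ inherit the order $m_2$ in $\eta$, so that the product with $p(x,\xi)$ lands in $S^{m_1+m_2}$ after the diagonal freezing. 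Finally, it is worth noting that the composite phase remains equal to $\phi$ only because $Q$ carries the trivial phase $x\cdot\xi$; composing two genuine FIOs would require a nondegeneracy hypothesis on the sum of the phases and would in general change the phase.
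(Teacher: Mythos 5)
The paper does not prove this proposition — it is quoted verbatim from Kumano-go (Theorems 10.2.1 and 10.2.2) — and your sketch is a faithful outline of exactly that cited argument: writing $P_\phi Q$ as an iterated oscillatory integral, linearizing the phase via $\tilde{\nabla}_\xi\phi$, changing variables to a model double-symbol integral, and invoking the asymptotic-expansion lemma to identify the symbol and its order. So your proposal is correct and takes essentially the same route the paper relies on by reference; the remaining work you flag (justifying the substitutions, the uniform symbol estimates for the remainders) is precisely what the cited theorems carry out.
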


The asymptotic expansions of the symbols to the second order are given by
\begin{align*}
  r_1(x,\xi) = & p(x,\xi)q(\nabla_\xi\phi(x,\xi),\xi) + \sum_{j=1}^d \partial_{\xi_j}p(x,\xi)D_{x_j}q(\nabla_\xi\phi(x,\xi),\xi) \\
  & + \frac{\ii}{2}p(x,\xi)\sum_{j,k=1}^d D_{x_j}D_{x_k}q(\nabla_\xi\phi(x,\xi),\xi)\frac{\partial^2\phi}{\partial\xi_j\partial\xi_k}(x,\xi) + r^*_1(x,\xi),
	\intertext{and}
  r_2(x,\xi) = & p(x,\nabla_x\phi(x,\xi))q(x,\xi) + \sum_{j=1}^d \partial_{\xi_j}p(x,\nabla_x\phi(x,\xi))D_{x_j}q(x,\xi) \\
  & - \frac{\ii}{2}\bigg(\sum_{j,k=1}^d \partial_{\xi_j}\partial_{\xi_k}p(x,\nabla_x\phi(x,\xi))\frac{\partial^2\phi}{\partial x_j\partial x_k}(x,\xi)\bigg)q(x,\xi) + r^*_2(x,\xi),
\end{align*}
where $r_1^*,r^*_2\in S^{m_1+m_2-2}$.

The following corollary immediately follows from Proposition \ref{prop:productsPDOFIO}.

\begin{corollary}\label{svilupprodpdo}
Let $P$ and $Q$ be PDOs with symbols $p(x,\xi)\in S^{m_1}$ and $q(x,\xi)\in S^{m_2}$. Then $PQ$ is a PDO with symbol $r(x,\xi)\in S^{m_1+m_2}$ having the asymptotic expansion
\[r(x,\xi) \sim \sum_{\alpha\in\N_0^d} \frac{1}{\alpha!} \partial^\alpha_\xi p(x,\xi)D^\alpha_x q(x,\xi).\]
\end{corollary}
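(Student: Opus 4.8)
The statement to prove is Corollary~\ref{svilupprodpdo}: that the product of two PDOs is again a PDO, with the stated asymptotic expansion of its symbol.

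\medskip

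The plan is to derive this as an immediate specialization of Proposition~\ref{prop:productsPDOFIO}. A PDO is, by Example~\ref{ex:symbols}(i), nothing but a FIO whose phase function is $\phi(x,\xi)=x\cdot\xi$. So I would first apply Proposition~\ref{prop:productsPDOFIO} with $P_\phi = P$ taken as the FIO with this particular phase $\phi(x,\xi)=x\cdot\xi$ and symbol $p\in S^{m_1}$, and $Q$ the PDO with symbol $q\in S^{m_2}$. The proposition then tells us that $PQ$ is a FIO with the \emph{same} phase function $\phi(x,\xi)=x\cdot\xi$ and a symbol $r\in S^{m_1+m_2}$; since a FIO with phase $x\cdot\xi$ is precisely a PDO (again by Example~\ref{ex:symbols}(i)), this already shows $PQ$ is a PDO of order $m_1+m_2$.

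\medskip

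The remaining task is to simplify the asymptotic expansion of $r$ coming from the formula for $r_1$ in Proposition~\ref{prop:productsPDOFIO},
\[
  r(x,\xi) \sim \sum_{\alpha\in\N_0^d} \frac{1}{\alpha!}\,\partial^\alpha_\xi\Bigl(p(x,\xi)\,D^\alpha_x q(\tilde{\nabla}_\xi\phi(x;\xi,\xi'),\xi')\Bigr)\Big|_{\xi'=\xi},
\]
by inserting $\phi(x,\xi)=x\cdot\xi$. The key computation is that $\nabla_\xi\phi(x,\xi)=x$, and hence $\tilde{\nabla}_\xi\phi(x;\xi',\xi)=\int_0^1 \nabla_\xi\phi(x,\xi+\theta(\xi'-\xi))\,d\theta = \int_0^1 x\,d\theta = x$, independently of $\xi,\xi'$. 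Therefore $D^\alpha_x q(\tilde{\nabla}_\xi\phi(x;\xi,\xi'),\xi')\big|_{\xi'=\xi} = D^\alpha_x q(x,\xi)$, and the expansion collapses to
\[
  r(x,\xi) \sim \sum_{\alpha\in\N_0^d} \frac{1}{\alpha!}\,\partial^\alpha_\xi\bigl(p(x,\xi)\,D^\alpha_x q(x,\xi)\bigr).
\]
A brief word is needed to pass from $\partial^\alpha_\xi\bigl(p(x,\xi)D^\alpha_x q(x,\xi)\bigr)$ to $\partial^\alpha_\xi p(x,\xi)\,D^\alpha_x q(x,\xi)$ as written in the statement; strictly speaking the Leibniz rule produces extra terms, but each such term involves $\partial^\beta_\xi p \cdot D^\alpha_x\partial^\gamma_\xi q$ with $\beta+\gamma=\alpha$ and $|\gamma|\geq 1$, which lands in a strictly lower symbol class and is absorbed into the asymptotic expansion at the next order; one can also recognize that $p(x,\xi)D^\alpha_x q(x,\xi)$ itself is the leading term and reindex, so the displayed form is the standard representative of the expansion modulo $S^{-\infty}$.

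\medskip

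There is essentially no obstacle here: the only point that merits care is the observation $\tilde{\nabla}_\xi\phi(x;\xi',\xi)\equiv x$, which is what makes the double-symbol composition formula degenerate to the familiar Kohn--Nirenberg composition formula, and the bookkeeping remark above about the Leibniz terms being lower order. Everything else is a direct citation of Proposition~\ref{prop:productsPDOFIO} and Example~\ref{ex:symbols}(i).
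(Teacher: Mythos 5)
Your overall route is exactly the paper's: the corollary is meant to be an immediate specialization of Proposition \ref{prop:productsPDOFIO} to the phase $\phi(x,\xi)=x\cdot\xi$, and your observation that $\tilde{\nabla}_\xi\phi(x;\xi',\xi)\equiv x$ is the right key point. However, your handling of the expansion contains a genuine error. In the formula for $r_1$ the substitution $\xi'=\xi$ is performed \emph{after} the differentiation $\partial^\alpha_\xi$, and the factor $D^\alpha_x q(\tilde{\nabla}_\xi\phi(x;\xi,\xi'),\xi')$ depends on $\xi$ only through $\tilde{\nabla}_\xi\phi$, which is identically $x$ for the linear phase. Hence $\partial^\alpha_\xi$ falls on $p$ alone and the expansion collapses \emph{directly} to $\sum_\alpha \frac{1}{\alpha!}\partial^\alpha_\xi p(x,\xi)\,D^\alpha_x q(x,\xi)$, with no Leibniz terms at all; this is also visible in the displayed second-order expansion after the Proposition, where the correction terms carry factors $\partial^2\phi/\partial\xi_j\partial\xi_k$, which vanish for $\phi=x\cdot\xi$. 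By instead substituting $\xi'=\xi$ first you arrive at the intermediate expansion $\sum_\alpha \frac{1}{\alpha!}\partial^\alpha_\xi\bigl(p\,D^\alpha_x q\bigr)$, and your patch — that the Leibniz cross terms are of strictly lower order and get absorbed — is false: a term $\partial^\beta_\xi p\cdot\partial^\gamma_\xi D^\alpha_x q$ with $\beta+\gamma=\alpha$, $|\gamma|\geq1$ lies in $S^{m_1+m_2-|\alpha|}$, the \emph{same} class as the retained term $\partial^\alpha_\xi p\,D^\alpha_x q$, and the two formal series are not asymptotically equal. Concretely, in $d=1$ take $p(x,\xi)=\xi$ and $q(x,\xi)=a(x)\xi$: the true symbol of $PQ$ is $a(x)\xi^2+(D_xa)(x)\xi$, which the stated Kohn--Nirenberg series reproduces, whereas $\sum_\alpha\frac{1}{\alpha!}\partial^\alpha_\xi\bigl(p\,D^\alpha_x q\bigr)$ sums to $a(x)\xi^2+2(D_xa)(x)\xi+(D_x^2a)(x)$; the discrepancy is a symbol of order $1$, not a smoothing remainder. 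So your final formula is correct, but the last step as justified would not survive scrutiny; the repair is simply to read the composition formula with the correct order of operations, after which no reindexing argument is needed.
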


A consequence of Corollary \ref{svilupprodpdo} is that the commutator $[P,Q]:=PQ-QP$ of two PDOs $P,Q$ with symbols $p\in S^{m_1}$ and $q\in S^{m_2}$ respectively is of order $m_1+m_2-1$, since the leading term of the asymptotic expansion of the symbols of both products $PQ$ and $QP$ is $p(x,\xi)q(x,\xi)$.
\\

The following Proposition states the boundedness of FIOs acting on Sobolev spaces.

\begin{proposition}[\cite{kumano-go}, Theorem 10.2.3]\label{prop:continuityFIO}
Let $P_\phi=p_\phi(x,D_x)$ and $r\in\R$. The operator $P_\phi$ defines a continuous map $H^{r+m}\longrightarrow H^r$, and there exists a constant $C=C_{r,m}>0$ and an integer $\ell\geq 0$ such that for every $u\in H^{r+m}$
\[ \|P_\phi u\|_r \leq C |p|^{(m)}_\ell \|u\|_{r+m}. \]
\end{proposition}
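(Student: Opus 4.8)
\textbf{Proof plan for Proposition \ref{prop:continuityFIO} (Calderón–Vaillancourt for FIOs).}

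The plan is to reduce the FIO case to the pseudo-differential case and then to prove the $L^2$-boundedness statement for PDOs of order zero directly, since the quantitative dependence on the seminorm $|p|^{(m)}_\ell$ is exactly what the Calderón–Vaillancourt theorem provides. First I would normalize the order: given $P_\phi = p_\phi(x,D_x)$ with $p \in S^m$, compose with the elliptic PDO $\langle D_x\rangle^{-m}$ (and $\langle D_x\rangle^{r}$) to transfer the claim to the case $r = 0$, $m = 0$. Concretely, $\langle D_x\rangle^{r} P_\phi \langle D_x\rangle^{-r-m}$ is, by Proposition \ref{prop:productsPDOFIO}, again a FIO with the same phase $\phi$ and a symbol $\tilde p \in S^0$ whose relevant seminorm $|\tilde p|^{(0)}_{\ell'}$ is bounded by $C|p|^{(m)}_{\ell}$ for a suitable $\ell$, using that each term in the asymptotic expansion involves finitely many derivatives of $p$ and of the (fixed) symbol $\langle\xi\rangle^{\pm s}$; since $\langle D_x\rangle^{\pm s}: H^a \to H^{a\mp s}$ is an isometric isomorphism, it suffices to show $\|\tilde P_\phi u\|_0 \le C|\tilde p|^{(0)}_{\ell'}\|u\|_0$ for FIOs of order zero.

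For the order-zero FIO $\tilde P_\phi$, the standard route is to estimate the operator $\tilde P_\phi \tilde P_\phi^{*}$, which by Definition \ref{def:adjoint} and Proposition \ref{prop:productsPDOFIO} is a PDO (the phases $-\phi$ and $\phi$ combine to the trivial phase $x\cdot\xi$ on the diagonal, so the composite is a genuine PDO, not just a FIO) of order $0$ with symbol controlled by $(|\tilde p|^{(0)}_{\ell'})^2$. Then $\|\tilde P_\phi u\|_0^2 = \langle \tilde P_\phi^{*}\tilde P_\phi u, u\rangle$ — or one works with $\tilde P_\phi\tilde P_\phi^{*}$ and duality — reduces everything to the $L^2$-boundedness of a zeroth-order PDO. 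For that final input I would invoke the classical Calderón–Vaillancourt estimate for PDOs: a symbol $q \in S^0$ gives an operator bounded on $L^2$ with norm $\le C \sup_{|\alpha+\beta|\le \kappa_d}\sup_{x,\xi}|\partial_\xi^\alpha\partial_x^\beta q(x,\xi)|$, where $\kappa_d$ depends only on $d$; this is precisely \cite[Theorem 10.2.3]{kumano-go} in its PDO form, or can be cited from \cite{kumano-go} directly (the reference given for the proposition). Tracking the seminorm through $q = $ symbol of $\tilde P_\phi\tilde P_\phi^{*}$ back to $|\tilde p|^{(0)}_{\ell'}$ and then back to $|p|^{(m)}_{\ell}$ fixes the value of $\ell$ in the statement.

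The main obstacle I anticipate is not the soft reduction but the \emph{bookkeeping of the seminorm dependence}: one must check that the asymptotic-expansion formulas in Proposition \ref{prop:productsPDOFIO}, together with the remainder estimates $r_1^{*}, r_2^{*} \in S^{m_1+m_2-2}$, yield a bound on the needed finitely many $C^\infty$-seminorms of the composite symbol that is \emph{linear} in $|p|^{(m)}_\ell$ (for a fixed, explicitly determined $\ell$) — in particular one needs that composing with the fixed elliptic factors $\langle D_x\rangle^{\pm s}$ does not introduce unbounded constants, which is true because their seminorms are absolute constants depending only on $r$, $m$, $d$. A secondary technical point is justifying the identity $\|\tilde P_\phi u\|_0^2 = \langle \tilde P_\phi^{*}\tilde P_\phi u, u\rangle$ on the dense subspace $\caS(\Rd)$ and then extending $\tilde P_\phi$ by continuity to all of $L^2 = H^0$; this is routine once the a priori estimate on $\caS(\Rd)$ is in hand. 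Alternatively, since the proposition is attributed verbatim to \cite[Theorem 10.2.3]{kumano-go}, the cleanest option is to present the above only as a sketch of the argument and defer the detailed seminorm tracking to that reference.
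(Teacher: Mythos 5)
The paper does not actually prove this proposition: it is imported verbatim from \cite[Theorem 10.2.3]{kumano-go}, and your sketch is essentially a reconstruction of the proof behind that citation (reduce to order zero by composing with $\langle D_x\rangle^{r}$ and $\langle D_x\rangle^{-r-m}$, then a $TT^{*}$ argument reducing matters to the Calder\'on--Vaillancourt estimate for zeroth-order PDOs, with seminorm bookkeeping throughout). So the overall route is the right one, and your final remark that one may simply defer to the reference matches what the paper in fact does.

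There is, however, one genuine gap in the sketch as written: the pivotal parenthetical claim that $\tilde P_\phi\tilde P_\phi^{*}$ ``is a genuine PDO because the phases $\phi$ and $-\phi$ combine to the trivial phase $x\cdot\xi$'' is not true for a phase that is merely smooth and positively homogeneous of degree one in $\xi$, which is all that Definition \ref{def:FIO} requires. The kernel of $\tilde P_\phi\tilde P_\phi^{*}$ involves $\phi(x,\xi)-\phi(y,\xi)=(x-y)\cdot\tilde{\nabla}_x\phi(x,y;\xi)$, and turning it into a pseudo-differential (double-symbol) kernel requires the change of variables $\xi\mapsto\eta=\tilde{\nabla}_x\phi(x,y;\xi)$ to be a global diffeomorphism with uniform bounds, i.e.\ a non-degeneracy condition such as $|\det\partial_x\partial_\xi\phi|\geq c>0$ together with symbol-type estimates on $\phi(x,\xi)-x\cdot\xi$. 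Without such a hypothesis the conclusion itself can fail: for instance $\phi(x,\xi)=x\cdot\xi-x_1\xi_1$ is admissible under Definition \ref{def:FIO}, yet the corresponding operator with symbol $1\in S^{0}$ is a restriction-type operator that is not bounded on $L^{2}$. This non-degeneracy is exactly what is built into Kumano-go's class of phase functions (his $\mathcal{P}(\tau)$ with $\tau$ small), and it is available in the paper's applications because the phases there solve the eikonal equations \eqref{eq:eikonalequation} on a sufficiently small interval $[0,\bar{T}]$ and are therefore small perturbations of $x\cdot\xi$; your proof should state this hypothesis explicitly and use it both in the $TT^{*}$ step and when invoking the composition results. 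A secondary point: Proposition \ref{prop:productsPDOFIO} as stated only gives asymptotic expansions, so for the linear-in-$|p|^{(m)}_{\ell}$ seminorm tracking you need the quantitative composition estimates of the type recorded in Proposition \ref{prop:productFIO} (or the corresponding statements in \cite{kumano-go}), not the expansion alone; note also that the symbol of $\tilde P_\phi\tilde P_\phi^{*}$ is quadratic in $\tilde p$, which is harmless only because you take a square root at the end of the $TT^{*}$ estimate.
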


Finally, we give a proposition concerning the composition of $n$ FIOs, which is a simplified version of Theorem 10.6.8 in \cite{kumano-go}, referring to \cite{kumano-go} for the details.
\begin{proposition}\label{prop:productFIO}
For $1\leq j\leq n$, let $P_{j,\phi_j}$ be  FIOs with phase functions $\phi_j$ and symbols $p_j\in S^{m_j}$.
There exist a symbol $p$ of order $m=m_1+\ldots+m_n$ and a phase function $\phi$ such that
$(P_{1,\phi_1}\cdots P_{n,\phi_n})(x,D_x)=p_{\phi}(x,D_x),$
and moreover for every integer $\ell\geq 0$ there exists a constant $C_\ell>0$ and an integer $\ell'\geq 0$ such that
\begin{equation}\label{simbprodFIO}
 |p|^{(m)}_\ell\leq C_\ell^{n-1}\prod_{j=1}^n  |p_j|^{(m_j)}_{\ell'}.
 \end{equation}
\end{proposition}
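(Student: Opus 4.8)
The plan is to argue by induction on the number $n$ of factors, with the two-FIO composition theorem of \cite{kumano-go} as the base case. For $n=1$ there is nothing to do: take $p=p_1$, $\phi=\phi_1$, and \eqref{simbprodFIO} holds trivially with $\ell'=\ell$ and $C_\ell=1$. For $n=2$ one invokes the composition result for two Fourier integral operators in \cite{kumano-go} (one of the ingredients of Theorem 10.6.8, proved along the lines of Proposition \ref{prop:productsPDOFIO}): the product $P_{1,\phi_1}P_{2,\phi_2}$ is again a FIO, with a phase function $\phi$ manufactured from $\phi_1$ and $\phi_2$ and a symbol $p\in S^{m_1+m_2}$ whose principal part is, up to composition with a gradient of the phase, the product $p_1p_2$; and, crucially, with a seminorm bound $|p|^{(m_1+m_2)}_\ell\le C_\ell\,|p_1|^{(m_1)}_{\ell'}\,|p_2|^{(m_2)}_{\ell'}$ in which $C_\ell$ and $\ell'$ depend only on $\ell$, the dimension, the orders $m_1,m_2$, and fixed seminorms of $\phi_1,\phi_2$.

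For the inductive step I would assume the statement for $n-1$ factors and apply it to $P_{2,\phi_2}\cdots P_{n,\phi_n}$, obtaining a FIO $q_\psi(x,D_x)$ with $q\in S^{m_2+\cdots+m_n}$, a phase function $\psi$, and $|q|^{(m_2+\cdots+m_n)}_k\le C_k^{\,n-2}\prod_{j=2}^n|p_j|^{(m_j)}_{k'}$ for every $k$. Then $P_{1,\phi_1}\bigl(q_\psi(x,D_x)\bigr)$ is, by the $n=2$ case, a FIO $p_\phi(x,D_x)$ with $p$ of order $m=m_1+\cdots+m_n$ and $|p|^{(m)}_\ell\le C_\ell\,|p_1|^{(m_1)}_{\ell'}\,|q|^{(m_2+\cdots+m_n)}_{\ell'}$. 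Feeding in the bound on $|q|$ at seminorm index $\ell'$ gives $|p|^{(m)}_\ell\le C_\ell\,C_{\ell'}^{\,n-2}\,|p_1|^{(m_1)}_{\ell'}\prod_{j=2}^n|p_j|^{(m_j)}_{\ell''}$ for a suitable larger index $\ell''$. Since the whole chain is built from exactly $n-1$ two-operator compositions, one finishes by replacing every constant by $\widetilde C_\ell:=\max\{C_\ell,C_{\ell'},\dots\}$ and every seminorm index by the largest one occurring, relabelled $\ell'$, which yields $|p|^{(m)}_\ell\le\widetilde C_\ell^{\,n-1}\prod_{j=1}^n|p_j|^{(m_j)}_{\ell'}$, i.e.\ \eqref{simbprodFIO}.

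The delicate point, and the reason this is only a ``simplified'' version of \cite[Theorem 10.6.8]{kumano-go}, is the bookkeeping of constants and seminorm indices through the induction. One must check that the two-FIO composition constant $C_\ell$ does not itself depend on $n$ --- which in turn requires that the phase functions produced by iterated composition stay in the admissible class of \cite{kumano-go} with their relevant seminorms controlled uniformly, so that the $C_\ell$ at each of the $n-1$ steps can be bounded by a single constant and the global factor is exactly $C_\ell^{n-1}$ rather than something growing faster in $n$. Keeping the seminorm index $\ell'$ finite (it may depend on $n$ and on the orders $m_j$, but for fixed $n$ it is a single number shared by all factors) is routine once the composition theorem is quoted with explicit seminorm control. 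I would also remark that the composed phases genuinely remain ``phase functions'' in the sense of the discussion after Definition \ref{def:FIO}, which is the case in all the strictly and weakly hyperbolic settings considered below, so the base case applies at every step.
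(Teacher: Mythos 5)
The paper does not actually prove this statement: it is quoted as a simplified version of \cite[Theorem 10.6.8]{kumano-go}, whose proof treats the multi-product of all $n$ operators at once, building a single multi-phase $\phi=\phi_1\#\cdots\#\phi_n$ and deriving symbol estimates with a constant and a seminorm index that are independent of $n$ by working in a class of phase functions with uniformly controlled (and suitably small) deviation from $x\cdot\xi$. Your proposal replaces this by induction on pairwise composition, and that is exactly where the gap lies. The whole content of \eqref{simbprodFIO} --- and the only reason the paper needs it --- is that $C_\ell$ and $\ell'$ do not depend on the number of factors: in \eqref{eq:calderonvaillancourt}--\eqref{eq:normWn} the proposition is applied to products of $\nu-1$ entries of $W_1$ for \emph{all} $\nu\in\N$, $\bar{\sigma}$ is a supremum of the seminorms $|q_j|^{(0)}_{\ell'}$ at one fixed index $\ell'$, and the convergence of $\sum_\nu W_\nu$ rests on the bound being exactly of the form $C^{\nu-2}\bar{\sigma}^{\nu-1}/(\nu-1)!$ with $C$, $\ell'$ fixed. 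Your induction does not deliver this: at each step the two-fold composition constant depends on seminorms of the phases involved, but the inner phase is the \emph{composed} phase produced by the previous steps, so its seminorms must themselves be shown to be bounded uniformly in the number of compositions; likewise the seminorm index escalates along the chain $\ell\to\ell'\to\ell''\to\cdots$, so a priori the final index depends on $n$. You flag both points as ``the delicate point'' and ``routine'', but they are precisely the non-trivial part; asserting that the composed phases ``stay in the admissible class with their relevant seminorms controlled uniformly'' is assuming the conclusion of Kumano-go's multi-product machinery rather than proving it.

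Concretely, two things would have to be supplied to make the induction work, and neither is sketched: (i) a quantitative stability result for phase functions under composition, of the type in \cite[Sections 10.4--10.6]{kumano-go}, showing that the phases generated by the eikonal equations on $[0,\bar T]$ (with $\bar T$ small) have deviations from $x\cdot\xi$ whose \emph{sum} over the factors stays below a fixed threshold, so that every composed phase lies in one fixed admissible class with uniformly bounded seminorms; and (ii) a version of the two-fold composition estimate in which $C_\ell$ and $\ell'$ depend only on that class (not on the individual phases), so that the same pair $(C_\ell,\ell')$ can be reused at every one of the $n-1$ steps. Without (i) and (ii), the inductive argument only yields a bound of the form $C_\ell(n)\prod_j|p_j|^{(m_j)}_{\ell'(n)}$ with unspecified growth in $n$, which is not \eqref{simbprodFIO} and would not suffice for the estimate \eqref{claim} and the convergence of the series defining $E$ in \eqref{eq:E}. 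This is why the paper (and Kumano-go) avoid the naive induction and prove the $n$-fold statement directly.
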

The phase function of the composition $P_{1,\phi_1}\cdots P_{n,\phi_n}$ can be explicitly computed, see Section 4.5 in \cite{kumano-go}, especially formulas (5.4) and (5.5). In the statement of Proposition \ref{prop:productFIO} we focus only on formula \eqref{simbprodFIO} which is crucial in the construction of the fundamental solution, without being precise about the phase function $\phi$, which will not be used in our computations.

We conclude this section with a remark that comments on the two concepts of fundamental solutions to PDEs that we deal with in this article.

\begin{remark}\label{keyrem}
In this paper we are going to construct the fundamental solution to an initial value problem (in the sense of \cite[Section 10.7]{kumano-go}) of the form
\begin{equation}\label{initval}
	\left\{
	\begin{array}{ll}
		L(t,x,D_x)U(t,x) 	& = G(t,x),\quad (t,x)\in[0,T]\times\R^d \\
		U(0,x) 						& = U_0(x),
	\end{array}
	\right.
\end{equation}
where $L=\partial_t-\ii\caD(t,x,D_x)+\caR(t,x,D_x)$ is a square matrix of PDOs with symbols of first order, $\caD$ is the diagonal principal part and $\caR$ is some PDO of order less than $1$, that satisfies some conditions. That is, we are going to construct a family of FIOs $E(t,s)$, indexed by two time parameters $(t,s)\in\Delta_{\bar{T}}$ (see the lines above Assumption \ref{one!} for the definition), where $0<\bar{T}\leq T$ is the (modified) time horizon of the PDE, such that
 \[
	\left\{
	\begin{array}{ll}
		LE(t,s) & = 0, 	\quad(t,s)\in\Delta_{\bar{T}}\\
		E(s,s) 	&	= \id \quad s\in[0,T].
	\end{array}
	\right.
\]
Then, we can compute the solution of Problem \eqref{initval} using Duhamel's formula
\begin{equation}\label{duham}
	U(t,x) = (E(t,0)U_0)(x) + \int_0^t (E(t,s)G(s))(x)ds.
\end{equation}
This equality rewritten, using the the definition of Schwartz kernels, with $\Lambda$ denoting the Schwartz kernel of $E$, is given by
\begin{align*}
	U(t,x) & = \langle\Lambda(t,0,x,\cdot),U_0\rangle + \int_0^t\langle\Lambda(t,s,x,\cdot),G(s,\cdot)\rangle ds,
	\intertext{or, using the abuse of notation from Section \ref{sec:stochastics}}
	U(t,x) & = \int_\Rd \Lambda(t,0,x,y)U_0(y)dy + \int_0^t\int_\Rd \Lambda(t,s,x,y)G(s,y)dyds.
\end{align*}
In the case of constant coefficients, we get
\begin{align*}
	U(t,x) & = \int_\Rd \Lambda(t,x-y)U_0(y)dy + \int_0^t\int_\Rd \Lambda(t-s,x-y)G(s,y)dyds,
\end{align*}
and
$\Lambda$ can be shown to be the solution to the abstract Cauchy problem
 \[
	\begin{cases}
		L\Lambda(t,x)	& =	\delta_{0,0}, \quad (t,x)\in[0,T]\times\Rd,	\\
		\Lambda(0,x)	& =	0,  					\quad x\in\R^d,
	\end{cases}
\]
where $\delta_{0,0}$ is the space-time Dirac distribution in $(0,0)$. This concept of fundamental solution, which is fairly common in PDE theory, is the one that we refered to in the Introduction and in Section \ref{sec:stochastics}.

From now on we will refer to both concepts as fundamental solution when there is no risk of confusion. We will however make the distinction when applying Proposition \ref{prop:ftofschwartzkernel}.
\end{remark}


\subsection{Solution theory for strictly hyperbolic PDE with variable coefficients}\label{sec:appendixhyperbolic}
In this section we present in detail how to arrive at the representation for the solution to an hyperbolic PDE with time and space depending coefficients. More specifically, we focus on the Cauchy problem
\begin{equation}\label{eq:equation1}
	\begin{cases}
		P(t,x,D_t,D_x)u(t,x)	= f(t,x), &	(t,x)\in(0,T]\times\Rd,	\\
		D_t^ju(0,x)= u_j(x),  & 0\leq j\leq n-1,\ x\in\Rd,
	\end{cases}
\end{equation}
where $P$ is the partial differential operator given for $n\in\N$, $n\geq 2$, by
\begin{equation}\label{eq:higherorderPDO}
P(t,x,D_t,D_x) =D_t^n + \sum_{j=0}^{n-1}\sum_{|\alpha|\leq n-j} a_{\alpha,j}(t,x)D_x^\alpha D_t^j,
\end{equation}
and for the right-hand side we choose an arbitrary $f\in C([0,T], H^r(\Rd))$, $r\in\R$.
We assume that $P$ is strictly hyperbolic, that is the symbol of the principal part, given by
\[
p_n(t,x,\tau,\xi)=\tau^n + \sum_{j=0}^{n-1}\sum_{|\alpha|= n-j} a_{\alpha,j}(t,x)\xi^\alpha\tau^j,
\]
factorizes w.r.t. $\tau$ as
\begin{equation}\label{stricthyphigh1}
p_n(t,x,\tau,\xi)=\prod_{j=1}^n (\tau+\lambda_j(t,x,\xi)),
\end{equation}
where the $n$ characteristic roots $-\lambda_j$ of  $p_{n}$ are  such that $\lambda_j(t,x,\xi)\in\R$ for all $1\leq j\leq n$, and
\begin{equation}\label{stricthyphigh2} |\lambda_j(t,x,\xi)-\lambda_k(t,x,\xi)|\geq c|\xi|,
\end{equation}
for some $c>0$ and for all $j\neq k$. 
\\
Note that in the case of second order equations, the principal symbol becomes (omitting the dependence on $(t,x)$ of the coefficients)
\[\tau^2 + \sum_{|\alpha|= 2} a_{\alpha,0}\xi^\alpha+\sum_{|\alpha|= 1} a_{\alpha,1}\xi^\alpha\tau:= \tau^2 - \sum_{1\leq j,k\leq d} a_{j,k}\xi_j\xi_k- \sum_{1\leq j\leq d} a_{j}\xi_j\tau,\]
and we can explicitly compute the roots $-\lambda_j$; the strict hyperbolicity condition can be expressed in this case explicitly on the coefficients as
\begin{equation}\label{eq:stricthyperbolicity2}
	   \left(\sum_{j=1}^d a_{j}\xi_j\right)^2+4\sum_{j,k=1}^d a_{j,k}\xi_j\xi_k \geq C|\xi|^2,
	\end{equation}
	since \eqref{eq:stricthyperbolicity2} is enough to ensure the roots to be real and distinct for $\xi\neq 0$.
	And if $P$ is given by \eqref{intro1i}, i.e. $a_{\alpha,1}(t,x)\equiv0$ for all $|\alpha|=1$, then condition \eqref{eq:stricthyperbolicity2} is exactly \eqref{intro2}. In the general case, one cannot compute $\lambda_j$ explicitly because of the lack of a general resolution formula for higher order polynomial equations. 

In this section we want to construct a representation of the solution of \eqref{eq:equation1} by producing its fundamental solution.	
The construction presented here follows the procedure in \cite{AC,AC1,AC2} and \cite[Section 10.7]{kumano-go} and goes in three steps: first we reduce the higher-order hyperbolic equation to a first-order system, then we compute the fundamental solution to the resulting first-order system and finally, we obtain a representation formula for the fundamental solution to the higher-order equation.

\smallskip
\noindent {\it First step: Reduction to a first-order system.} Before starting with the construction, we need to point out that the factorization \eqref{stricthyphigh1} of the principal symbol of $P$ can be brought to the level of operators producing the following factorization for the principal part of the operator $P$:
\begin{equation}\label{factop}
P(t,x,D_t,D_x)=\prod_{j=1}^n (D_t+\lambda_j(t,x,D_x))+\sum_{j=0}^{n-1}S_j(t,x,D_x)D_t^j,
\end{equation}
where $S_j$ are PDOs with symbols $S_j(t,x,\xi)\in C([0,T];S^{n-j-1})$. This is only a computation, which makes use of Corollary \ref{svilupprodpdo} and works thanks to \eqref{stricthyphigh1}; for a detailed proof of \eqref{factop} we refer to \cite[Proposition 3.2, $p=1$, $n=0$]{AB}.

Let us now define the vector $V:=(v_1,\ldots,v_n)$ as follows:
\begin{equation}\label{reductiongeneral}
\begin{cases}
 	v_1 :=	\langle D_x\rangle^{n-1}u, \\
 	v_j :=	\langle D_x\rangle^{n-j}(D_t+\lambda_{j-1})\ldots(D_t + \lambda_1)u,\quad j=2,\ldots,n.
 	\end{cases}
\end{equation}
With these definitions we compute, at operator's level, for all $j=1,\ldots,n-1$
\begin{align*}
  (D_t+\lambda_j)v_j
  & = \langle D_x\rangle^{n-j}(D_t + \lambda_j)(D_t+\lambda_{j-1})\ldots(D_t+\lambda_{1})u \\
  & \phantom{= } + [\lambda_j,\langle D_x\rangle^{n-j}](D_t+\lambda_{j-1})\ldots(D_t+\lambda_{1})u \\
  & = \langle D_x\rangle v_{j+1} + [\lambda_j,\langle D_x\rangle^{n-j}]\langle D_x\rangle^{-(n-j)}v_j,
\end{align*}
and for $j=n$, by \eqref{factop} we get
\begin{align*}
  (D_t+\lambda_n)v_n =\prod_{j=1}^n (D_t+\lambda_j(t,x,D_x))u=f-\sum_{j=0}^{n-1}S_j(t,x,D_x)D_t^ju.
\end{align*}
By the reduction \eqref{reductiongeneral}, working by induction (for a proof, see \cite[formula (4.8), $p=1$]{AB}), we get
\[D_t^ju=\langle D_x\rangle^{-(n-j-1)}\sum_{\ell=1}^{j+1}S_\ell^{(0)}(t,x,D_x)v_\ell,\]
where $S_\ell^{(0)}$ are PDOs of order zero, $1\leq \ell\leq j+1$, and so
\[  (D_t+\lambda_n)v_n=f-\sum_{j=1}^{n}R_j(t,x,D_x)v_j,\]
for some PDOs $R_j$ of order zero. 

Summing up, the Cauchy problem \eqref{eq:equation1} is equivalent to the first-order system
\begin{align}\label{eq:KandR}
\begin{cases}
	\bfP(t,x,D_t,D_x)V(t,x) = G(t,x) &(t,x)\in (0,T]\times\Rd,\\
	V(0,x) = V_0(x), & x\in \Rd,
\end{cases}
\end{align}
where $V=(v_1,\ldots,v_n)^T$, $\bfP = D_t + K + \caR$, with 
\begin{equation}\label{eq:operatorhigherorder}
K(t,x,D_x)=
\begin{pmatrix}
\lambda_1& -\langle D_x\rangle	&	0	&	0	&	\cdots	&	0	&	0	\\
0	& \lambda_2& -\langle D_x\rangle	&	0	&	\cdots	&	0	&	0	\\	\vdots	&	\vdots	&	\ddots	&	\ddots	&	\cdots	&	\vdots	&	\vdots	\\
0	&	&	\cdots	&	&	&	\lambda_{n-1}& -\langle D_x\rangle\\	0	&	&	\cdots	&	&	&	0	& \lambda_{n} \\
 	\end{pmatrix},
\end{equation}
$\caR$ is a matrix of PDOs of order zero, $G=(0,\ldots,f)^T$ and
\begin{equation}\label{V0high}
V_0=(v_{0,j})_{1\leq j\leq n}^T:=\left(\sum_{\ell=0}^{j-1}S_\ell^{(n-\ell-1)}u_\ell\right)_{1\leq j\leq n}^T,
\end{equation}
with $S_\ell^{(n-\ell-1)}$ PDOs with symbols $S_\ell^{(n-\ell-1)}(x,\xi)\in S^{n-\ell-1}$, $0\leq \ell\leq n-1$, and $u_j$ the Cauchy data of the original equation \eqref{eq:equation1}.

Now we want to diagonalize the principal part of the operator matrix in \eqref{eq:operatorhigherorder}. To this end, we start working at the level of symbols and look for a diagonalizer of the bidiagonal matrix $K(t,x,\xi)$; it is easy to check that $M(t,x,\xi)=(m_{ij}(t,x,\xi))_{i,j=1,\ldots,n}$, with $m_{i,i}=1$, $m_{i,j}=0$ for $i>j$ and
\begin{equation}\label{emmij}
m_{i,j}(t,x,\xi) = \frac{(-1)^{j-1}\langle\xi\rangle^{j-i}}{\prod_{k=1}^{j-1} (\lambda_j(t,x,\xi)-\lambda_k(t,x,\xi))}
\end{equation}
for $i<j$ is a diagonalizer of $K(t,x,\xi)$. Note that the symbols $m_{i,j}$ are in $\caC([0,T],S^0)$, and that the matrix $M$ is invertible thanks to its special structure and to condition \eqref{stricthyphigh2}, and the inverse $M^{-1}$ is a matrix of symbols of order zero. 

Coming now to the level of operators, we define the operator matrix $M(t,x,D_x)$ with symbol $M(t,x,\xi)$; then we set $$W:=M^{-1}V,\;  \tilde{\bfP}:= M^{-1}\bfP M,$$ $W_0:=M^{-1}V_0$, $\tilde{G}:=M^{-1}G$ so that we obtain the system of first-order equations
\begin{equation}\label{tlachi}
\begin{cases}
	\tilde{\bfP}W = \tilde{G} &\text{ on } (0,T]\times\R^d, \\
	W(0)  = W_0  &\text{ on } \R^d,
\end{cases}
\end{equation}
where
\begin{equation}\label{fatPgeneral}
\tilde{\bfP} = D_t + K_1 + \tilde\caR,
\end{equation}
$K_1$ is a diagonal operator matrix with $\lambda_1,\ldots,\lambda_n$ as entries, and $\tilde\caR$ is an $n\times n$-operator matrix with elements in $\caC([0,T],S^0)$.

\smallskip

\noindent{\it Second step: Computing the (fundamental) solution to system \eqref{tlachi}.}
The system in \eqref{tlachi} is in the form of \cite[Section 10.7]{kumano-go}, thus by \cite[Theorem 10.7.2]{kumano-go} it admits a unique solution, that we construct here below. 

To this end, let $\phi_j = \phi_j(t,s,x,\xi)$, $1\leq j\leq n,$ be the solutions to the so-called \emph{eikonal equations} given by
\begin{equation}\label{eq:eikonalequation}
\begin{cases}
\partial_t \phi_j(t,s,x,\xi) + \lambda_j(t,x,\nabla_x\phi_j(t,s,x,\xi))= 0, & (t,s,x,\xi)\in\Delta_{\bar{T}}\times\Rd\times\Rd,\\
\phi_j(s,s,x,\xi)= x\cdot\xi, & s\in[0,\bar{T}].
\end{cases}
\end{equation}
where $x,\xi\in\Rd$ and $(t,s)\in\Delta_{\bar{T}}$, where $0<\bar{T}\leq T$ is sufficiently small. Indeed, \cite[Theorem 10.4.1]{kumano-go} states that for a sufficiently small $0<\bar{T}\leq T$ there exists a unique solution to the eikonal equations \eqref{eq:eikonalequation}, $1\leq j\leq n$. We define the operator matrix
\[ I_\phi(t,s) = \begin{pmatrix} I_{\phi_1}(t,s) & & 0 \\ & \ddots & \\ 0 & & I_{\phi_n}(t,s)\end{pmatrix}, \]
where $I_{\phi_j}$ are the FIOs with phase function $\phi_j$ and symbol $1$. From this definition together with Proposition \ref{prop:productsPDOFIO} we see that
\begin{align}\label{eq:r_0}
  & D_t I_{\phi_j} + \lambda_j(t,x,D_x)I_{\phi_j} \notag\\
  & = \int_\Rd \e^{\ii\phi_j(t,s,x,\xi)}\frac{\partial\phi_j}{\partial t}(t,s,x,\xi)\dbar\xi + \int_\Rd \e^{\ii\phi_j(t,s,x,\xi)} \lambda_j(t,x,\nabla_x \phi_j(t,s,x,\xi))\dbar\xi \notag\\
	& \phantom{=} + \int_\Rd \e^{\ii\phi_j(t,s,x,\xi)} b_{0,j}(t,s,x,\xi) \dbar\xi,
\end{align}
where $b_{0,j}(t,s)\in S^0$. The first two integral terms on the right-hand side of \eqref{eq:r_0} cancel by the definition of $\phi_j$.

Denoting by $B_{0,j}(t,s,x,D_x)$ the PDOs with symbols $b_{0,j}(t,s,x,\xi)$ in \eqref{eq:r_0}, we define the family $(W_1(t,s);(t,s)\in\Delta_{\bar{T}})$ of FIOs by
\begin{align}\label{eq:W_1}
  & W_1(t,s,x,D_x)  \\
  & := -\ii\left( \begin{pmatrix} B_{0,1}(t,s) & & 0 \\ & \ddots & \\ 0 & & B_{0,n}(t,s)\end{pmatrix} + \tilde\caR(t,x,D_x)\right)I_{\phi}(t,s,x,D_x). \nonumber
\end{align}
From \eqref{fatPgeneral}, \eqref{eq:r_0}, \eqref{eq:eikonalequation} and \eqref{eq:W_1} we obtain that
\begin{equation}\label{heart}
\tilde{\bfP}(t,x,D_x)I_{\phi}(t,s,x,D_x) = \ii W_1(t,s,x,D_x),
\end{equation}
that is $\ii W_1$ is the residual of system \eqref{tlachi} for $I_\phi$. We define then by induction the sequence of $n\times n$-matrices of FIOs, denoted by $(W_\kappa(t,s);(t,s)\in\Delta_{\bar{T}})_{\kappa\in\N}$, by
\begin{equation}\label{eq:Wn+1}
  W_{\kappa+1}(t,s,x,D_x) = \int_s^t W_1(t,\theta,x,D_x)W_\kappa(\theta,s,x,D_x)d\theta.
\end{equation}

We now claim that the operator norms of $W_\kappa$, seen as operators from the Sobolev space $H^r$ for any fixed $r$ into itself, can be estimated from above by
\begin{equation}\label{claim}
\|W_\kappa(t,s)\|\leq \frac{C_r^{\kappa-1}|t-s|^{\kappa-1}}{(\kappa-1)!} \leq \frac{C_r^{\kappa-1}\bar{T}^{\kappa-1}}{(\kappa-1)!},
\end{equation}
for all $(t,s)\in\Delta_{\bar{T}}$ and $\kappa\in\N$, where $C_r$ is a constant which only depends on the index of the Sobolev space, thanks to Propositions \ref{prop:continuityFIO} and \ref{prop:productFIO}. Indeed, to deal with the operator norms in \eqref{claim}, we need to explicitly write the matrices $W_\kappa$; an induction in \eqref{eq:Wn+1} easily shows that
 \begin{equation}\label{eq:wn+12}
 	W_\kappa(t,s) = \int_s^t\int_s^{\theta_1}\ldots\int_s^{\theta_{\kappa-2}} W_1(t,\theta_1)\ldots W_1(\theta_{\kappa-2},\theta_{\kappa-1})d\theta_{\kappa-1}\ldots d\theta_1.
 \end{equation}
 The integrand is a product of $\kappa-1$ $n\times n$-matrices  of FIOs, therefore it is an operator matrix whose entries consist of $n^{\kappa-2}$ summands of products of $\kappa-1$ FIOs. Denoting by $Q_1\ldots Q_{\kappa-1}$ one of these products, where each of the $Q_j$ is one of the $n^2$ entries of the $n\times n$-matrix of FIOs $W_1$, we have from Proposition \ref{prop:productFIO} that $Q_1\ldots Q_{\kappa-1}$ is again a FIO with symbol $\sigma_{\kappa-1}$ of order zero, and for all $\ell\in\N$ there exists $C_\ell>0$ and $\ell'\in\N_0$ such that
 \[ |\sigma_{\kappa-1}(t,\theta_1,\ldots,\theta_{\kappa-1})|_{\ell}^{(0)} \leq C_\ell^{\kappa-2} |q_1(t,\theta_1)|_{\ell'}^{(0)} \ldots |q_{\kappa-1}(\theta_{\kappa-2},\theta_{\kappa-1})|_{\ell'}^{(0)}, \]
 where for $j=1,\ldots,\kappa-1$, $q_j(t,s)$ denotes the symbol of the FIO $Q_j(t,s)$, $(t,s)\in\Delta_{\bar{T}}$.
 Now we set
 \[ \bar{\sigma} := \sup_{j=1,\ldots,\kappa-1}\sup_{(t,s)\in\Delta_{\bar{T}}} |q_j(t,s)|_{\ell'}^{(0)} < \infty, \]
 so that
 \[ |\sigma_{\kappa-1}(t,\theta_1,\ldots,\theta_{\kappa-1})|_{\ell}^{(0)} \leq C_\ell^{\kappa-2} \bar{\sigma}^{\kappa-1}.\]
 By Proposition \ref{prop:continuityFIO} applied to products of the type $Q_1\ldots Q_{\kappa-1}$ and from the previous inequality, for every $r\geq0$ there exist constants $C_r>0$ (depending only on the index of the Sobolev space) and $\ell_r\in\N_0$ such that for all $u\in H^r$
 \begin{align}\label{eq:calderonvaillancourt}
 	\|Q_1(t,\theta_1)\ldots Q_{\kappa-1}(\theta_{\kappa-2}\theta_{\kappa-1})u\|_r
 	&	\leq C_r|\sigma_{\kappa-1}(t,\theta_1,\ldots,\theta_{\kappa-1})|^{(0)}_{\ell_r}\|u\|_r \notag\\
 	&	\leq C_r C_{\ell_r}^{\kappa-2} \bar{\sigma}^{\kappa-1}\|u\|_r.
 \end{align}
 Therefore, in the operator matrix $W_1(t,\theta_1)\ldots W_1(\theta_{\kappa-2},\theta_{\kappa-1})$, the operator norm of each entry can be bounded from above by $n^{\kappa-2}C_rC_{\ell_r}^{\kappa-2}\bar{\sigma}^{\kappa-1}$, since there are $n^{\kappa-2}$ products of $\kappa-1$ FIOs. Now by \eqref{eq:wn+12} and \eqref{eq:calderonvaillancourt} we deduce that
 \begin{align}\label{eq:normWn}
   \|W_\kappa(t,s)\|
   & \leq \int_s^t\int_s^{\theta_1}\ldots\int_s^{\theta_{\kappa-2}} \|W_1(t,\theta_1)\ldots W_1(\theta_{\kappa-2},\theta_{\kappa-1})\| d\theta_{\kappa-1}\ldots d\theta_1 \notag\\
 	& \leq n^{\kappa-2}C_rC_{\ell_r}^{\kappa-2}\bar{\sigma}^{\kappa-1}\int_s^t\int_s^{\theta_1}\ldots\int_s^{\theta_{\kappa-2}} d\theta_{\kappa-1}\ldots d\theta_1 \notag\\
 	& \leq \frac{n^{\kappa-2}C_rC_{\ell_r}^{\kappa-2}\bar{\sigma}^{\kappa-1}|t-s|^{\kappa-1}}{(\kappa-1)!}= \frac{\tilde C_r^{\kappa-1}|t-s|^{\kappa-1}}{(\kappa-1)!}
 \end{align}
 for a new constant $\tilde C_r$ depending only on $r$, which yields the claim \eqref{claim}.

Now, using the estimate \eqref{claim} one can show that the sequence of FIOs defined for all $(t,s)\in\Delta_{\bar{T}}$ and all $N\in\N$ by
\begin{equation}\label{eq:En}
	E_N(t,s) = I_\phi(t,s) + \int_s^t I_\phi(t,\theta)\sum_{\kappa=1}^N W_\kappa(\theta,s)d\theta
\end{equation}
is a well-defined FIO on $H^r$ for every $r$ and converges to the well-defined operator
\begin{equation}\label{eq:E}
	E(t,s) = I_\phi(t,s) + \int_s^t I_\phi(t,\theta)\sum_{\kappa=1}^\infty W_\kappa(\theta,s)d\theta,
\end{equation}
which is the fundamental solution to the system \eqref{tlachi} in the sense that it satisfies
\begin{equation}\label{tocheck}
	\begin{cases}
		\tilde{\bfP}E(t,s) = 0 	& (t,s)\in \Delta_{\bar{T}},\\
		E(s,s)=\id 							& s\in[0,\bar{T}].
	\end{cases}
\end{equation}
Moreover, strictly hyperbolic equations are a subclass of the class of hyperbolic equations with involutive roots (see \cite{morimoto},\cite{taniguchi}), a class of equations such that the sum in \eqref{eq:E} turns out to be a finite; thus, we can assert that $E(t,s)$ is a FIO, too. Notice that $(t,s)\mapsto E(t,s)\in\caC(\Delta_{\bar{T}})$, with values in the space of the FIOs with some phase function $\phi$ and a symbol of order $0$. This can be seen from \eqref{eq:E}, because $E$ is obtained by continuous operations of operators which are continuous in $t,s$. 
By Duhamel's formula, the unique solution to system  \eqref{tlachi} is given by
\begin{align*}
W(t) = & E(t,0)W_0 + \ii\int_0^t E(t,\theta)\tilde{G}(\theta)d\theta
\\ & =E(t,0)M^{-1}(0,x,D_x)V_0 + \ii\int_0^t E(t,\theta)M^{-1}(\theta,x,D_x)G(\theta)d\theta.
\end{align*}
The entries of the vector $W(t)$ are given for $1\leq h\leq n$ by
\begin{align}\label{eq:w(t)high}
w_k(t) = & \sum_{h=1}^n\sum_{j=1}^n\left[
e_{k,h}(t,0)m_{h,j}^{-1}(0)v_{0,j}+\ii\int_0^t e_{k,h}(t,\theta)m_{h,j}^{-1}(\theta)g_j(\theta)d\theta
\right],
\end{align}
where $m_{i,k}(t,x,D_x)$ stands for a PDO with symbol $m_{i,k}(t,x,\xi)$ as in \eqref{emmij}, and $e_{k,h}(t,s)$, $1\leq k,h\leq n,$ are the entries in the operator matrix $E(t,s)$.

\smallskip

{\it Third step: Computing the fundamental solution to the equation \eqref{eq:equation1}.}
From the solution to the first-order system we can then go back to the solution to the original equation \eqref{eq:equation1}. For this we reverse all the transformations from $u$ to $V$, then from $V$ to $W$ and get
\begin{align}\label{eq:goingbacktouhigh}
u(t)	& = \langle D_x\rangle^{-(n-1)}v_1(t) = \langle D_x\rangle^{-(n-1)}\sum_{k=1}^{n}m_{i,k}(t,x,D_x)w_k(t).
\end{align}
Combining this with \eqref{eq:w(t)high} and looking at \eqref{V0high} together with the definition of $G$, we obtain the following representation for the solution $u$ of \eqref{eq:equation1}:
\begin{align}\nonumber
u(t) =& \sum_{k=1}^n\sum_{h=1}^n\sum_{j=1}^n\sum_{\ell=0}^{j-1}\langle D_x\rangle^{-(n-1)}m_{i,k}(t)e_{k,h}(t,0)m_{h,j}^{-1}(0)S_\ell^{(n-\ell-1)}u_\ell\\
\nonumber&+
\ii\sum_{k=1}^n\sum_{h=1}^n\int_0^t \langle D_x\rangle^{-(n-1)}m_{i,k}(t)e_{k,h}(t,\theta)m_{hn}^{-1}(\theta)f(\theta)d\theta
\\\label{eq:representationuhigh}
&=\sum_{\ell=0}^{n-1}T_\ell(t)u_\ell+\int_0^t T_n(t,\theta)f(\theta)d\theta
\end{align}
where $T_\ell(t)=T_\ell(t,x,D_x)$ are FIOs with symbols of order $-\ell$ for all $0\leq \ell\leq n-1$, $T_n(t,s)=T_n(t,s,x,D_x)$ a FIO with symbol of order $-(n-1)$, and $f\in C([0,T], H^r(\R^d))$, $r$ arbitrary.
Formula \eqref{eq:representationuhigh} yields the representation that we will use for instance in \eqref{eq:representationu}.

\begin{remark}
To let the construction of this section work we do not need to ask the coefficients of \eqref{eq:higherorderPDO} to be of class $\caC^\infty_b$ with respect to the spatial argument, but only to assume that they are $\caC^\ell_b$-functions in the spatial argument for a sufficiently large $\ell\in\N$. Such an $\ell$ has to be large enough such that for every $t\in[0,T]$ Proposition \ref{prop:continuityFIO} can be applied to the entries of $W_\kappa$.
This $\ell$ cannot be computed explicitly in the general case, but for a (simple enough) example it is possible to provide its precise value. This is what we are going to do in the final Section \ref{sec:weakhyperbolic}.
\end{remark}

\subsection{Stochastic second-order hyperbolic equations - the case of strict hyperbolicity}\label{sec:2ndorderhyperbolic}
In this section we consider the case where the partial differential operator $L$ in \eqref{eq:SPDE} is given by \eqref{intro1i}. More specifically, this section is devoted to the proof of the following theorem.

\begin{theorem}\label{thm:2ndorderhyperbolic} Let us consider an SPDE \eqref{eq:SPDE} where the partial differential operator $L$ is given by 
\begin{equation}\label{1i}
	L=\partial_t^2 - \sum_{j,k=1}^d a_{j,k}(t,x)\partial_{x_j}\partial_{x_k} - \sum_{j=1}^d b_j(t,x)\partial_ {x_j} - c(t,x),
\end{equation}
where for the coefficients we assume $a_{j,k}\in C^1([0,T];\caC^{\infty}_b(\R^d))$ for $1\leq j,k\leq d,$ $b_{j}\in\caC([0,T];\caC^{\infty}_b(\R^d))$ for $1\leq j\leq d$ and $c\in\caC([0,T];\caC^{\infty}_b(\R^d))$. Suppose that $L$ is a strictly hyperbolic operator, i.e.\ there exists a constant $C>0$ such that
	\begin{equation}\label{eq:stricthyperbolicity}
	  \sum_{j,k=1}^d a_{j,k}(t,x)\xi_j\xi_k \geq C|\xi|^2,
	\end{equation}
for all $(x,\xi)\in\Rd\times\Rd$. Assume for the initial conditions that $u_0\in H^r(\Rd)$ and $u_1\in H^{r-1}(\Rd)$, where $2r>d$. Furthermore, assume for the spectral measure that \eqref{eq:condition2} with $\kappa=1$ holds, and that $\sigma$ and $\gamma$ are 
such that $\gamma,\sigma\in L^2([0,T]; C_b)$, $s\mapsto\caF\sigma(s)=\nu_s \in L^2([0,T],\caM_b(\Rd))$, $s\mapsto\caF\gamma(s)=\chi_s \in L^2([0,T],\caM_b(\Rd))$.
\\
Then, for some time horizon $0<\bar{T}\leq T$, the Schwartz kernel of the FIO $T_2$ in \eqref{eq:representationu} here below satisfies Assumptions \ref{one!}, \ref{three!} and \ref{five!}, and therefore there exists a random-field solution to the SPDE \eqref{eq:SPDE} with partial differential operator given by \eqref{intro1i}.
\end{theorem}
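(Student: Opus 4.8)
The plan is to specialize the three–step procedure of Section~\ref{sec:appendixhyperbolic} to the order $n=2$ applied to the operator $L$ in \eqref{1i}, and then to feed the resulting fundamental solution into Theorem~\ref{thm:existenceanduniquenessconstant}. First I would rewrite $L$ in terms of $D_t,D_x$ so that it takes the form \eqref{eq:higherorderPDO} with $n=2$: the strict hyperbolicity condition \eqref{eq:stricthyperbolicity} is precisely what makes the two characteristic roots $\lambda_{1,2}(t,x,\xi)=\mp\big(\sum_{j,k}a_{j,k}(t,x)\xi_j\xi_k\big)^{1/2}$ real and separated as in \eqref{stricthyphigh2}, while the regularity hypotheses ($a_{j,k}\in\caC^1([0,T];\caC^\infty_b)$, $b_j,c\in\caC([0,T];\caC^\infty_b)$) are exactly what is needed for the $\lambda_j$ to be admissible symbols, for the eikonal equations \eqref{eq:eikonalequation} to be uniquely solvable on some $[0,\bar T]$, and for all the FIO/PDO compositions entering the construction of $E(t,s)$ to map Sobolev spaces continuously. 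This yields the representation \eqref{eq:representationuhigh},
\[ u(t) = T_0(t)u_0 + T_1(t)u_1 + \int_0^t T_2(t,\theta)f(\theta)\,d\theta, \]
where $T_0(t)$ is a FIO with symbol of order $0$ and $T_1(t),T_2(t,s)$ are FIOs with symbols of order $-1$ (the gain of one order coming from the factor $\langle D_x\rangle^{-(n-1)}=\langle D_x\rangle^{-1}$). In the language of Section~\ref{sec:stochastics} and Remark~\ref{keyrem}, $\Lambda(t,s,x,\cdot)$ is the Schwartz kernel of $T_2(t,s)$ and $I_0(t,x)=(T_0(t)u_0)(x)+(T_1(t)u_1)(x)$.

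Next I would extract the one estimate everything rests on. By Lemma~\ref{lem:FTfundamentalsolution}, $\caF\Lambda(t,s,x)(\cdot)$ is a Schwartz function, so in particular \ref{ass:A1} and \ref{ass:A3} are consistent; by Proposition~\ref{prop:ftofschwartzkernel} it equals $\e^{\ii\phi(t,s,x,-\eta)}p_2(t,s,x,-\eta)$ with $\phi$ real-valued and $p_2(t,s,\cdot,\cdot)\in S^{-1}$ depending continuously on $(t,s)\in\Delta_{\bar T}$; since $\Delta_{\bar T}$ is compact, $C_0:=\sup_{(t,s)\in\Delta_{\bar T}}|p_2(t,s)|^{(-1)}_0<\infty$, and hence
\[ |\caF\Lambda(t,s,x)(\eta)| \le C_0\langle\eta\rangle^{-1} \]
for all $\eta$, uniformly in $(t,s,x)$ (adjusting $C_0$ to absorb the bounded contribution near $\eta=0$). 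Measurability of $(t,s,x,\xi)\mapsto\caF\Lambda(t,s,x)(\xi)$ follows from smoothness in $(x,\xi)$ together with continuity in $(t,s)$. Assumption \ref{ass:A5} is then immediate: by Proposition~\ref{prop:continuityFIO}, $T_0(t)\colon H^r\to H^r$ and $T_1(t)\colon H^{r-1}\to H^r$, so $u_0\in H^r$, $u_1\in H^{r-1}$ with $2r>d$ give, via the Sobolev embedding $H^r\hookrightarrow\caC_b(\Rd)$, that $I_0(t,\cdot)\in\caC_b(\Rd)$, in particular finite at every $(t,x)$.

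For \ref{ass:A1} and \ref{ass:A3} it then remains only to bound the spectral integrals. From the displayed estimate,
\[ \sup_{\eta\in\Rd}\int_\Rd |\caF\Lambda(t,s,x)(\xi+\eta)|^2\mu(d\xi) \le C_0^2\,\sup_{\eta\in\Rd}\int_\Rd \langle\xi+\eta\rangle^{-2}\mu(d\xi), \]
which is finite by the hypothesis \eqref{eq:condition2} with $\nu=1$, and uniformly in $(t,s,x)$; combined with the assumed $s\mapsto\caF\sigma(s)\in L^2([0,T];L^1(\Rd))$ this gives \eqref{eq:condition1}, so \ref{ass:A1} holds. Likewise $\sup_{\eta}|\caF\Lambda(t,s,x)(\eta)|^2\le C_0^2$, which together with $s\mapsto\caF\gamma(s)\in L^2([0,T];L^1(\Rd))$ yields \eqref{eq:condition4}, i.e.\ \ref{ass:A3}.

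The remaining assumptions \ref{ass:A2} and \ref{ass:A4}, which quantify continuity in the middle time variable, are the heart of the matter. Using the $\caC^1$-dependence on $t$ one has $\partial_s[\caF\Lambda(t,s,x)(\eta)]=\ii\,\partial_s\phi(t,s,x,-\eta)\,\caF\Lambda(t,s,x)(\eta)+\e^{\ii\phi(t,s,x,-\eta)}\partial_s p_2(t,s,x,-\eta)$, and since $\partial_s\phi$ is again homogeneous of degree one in $\eta$ while $p_2\in S^{-1}$, this produces the two-sided bound
\[ |\caF(\Lambda(t,s,x)-\Lambda(t,r,x))(\eta)| \le C\,\min\!\big(|s-r|,\langle\eta\rangle^{-1}\big), \]
uniformly in $(t,x)$. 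Inserting this into \ref{ass:A2} (and into \ref{ass:A4}, where $\mu$ is replaced by $\delta_0$), dominating the integrand by $4C_0^2\langle\xi+\eta\rangle^{-2}$ — which is $\mu$-integrable uniformly over translates by \eqref{eq:condition2} with $\nu=1$ — and letting $h\downarrow0$, a dominated-convergence argument (first in $s$, then in the translation variable $\eta$) delivers the required limits; Theorem~\ref{thm:existenceanduniquenessconstant} then applies on $[0,\bar T]$ and gives the unique random-field solution. I expect the genuinely delicate point to be exactly this last passage: because $\mu$ is only tempered and the phase $\phi$ has a modulus of continuity in $(t,s)$ that degrades linearly in $|\eta|$ (a consequence of its degree-one homogeneity), one must play the $\langle\eta\rangle^{-1}$ decay of the symbol $p_2$ against it in order to make the convergence uniform in $\eta$; everything else in the argument is bookkeeping of symbol orders.
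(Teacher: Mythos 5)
Your proposal is correct and follows essentially the same route as the paper: the $n=2$ specialization of the reduction in Section~\ref{sec:appendixhyperbolic} giving the representation \eqref{eq:representationuhigh} with $T_0$ of order $0$ and $T_1,T_2$ of order $-1$, Proposition~\ref{prop:ftofschwartzkernel} plus \eqref{eq:pointwiseestimate} to get $|\caF\Lambda(t,s,x)(\eta)|\leq C\langle\eta\rangle^{-1}$ uniformly on $\Delta_{\bar T}$, Sobolev embedding for \ref{ass:A5}, condition \eqref{eq:condition2} with $\nu=1$ for \ref{ass:A1}/\ref{ass:A3}, and dominated convergence with the dominating function $C\langle\cdot\rangle^{-1}$ for \ref{ass:A2}/\ref{ass:A4}. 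Your extra quantitative bound $\min(|s-r|,\langle\eta\rangle^{-1})$ for the time increments is a refinement of, but not a departure from, the paper's argument, which simply combines continuity of the symbol in $(t,s)$ with the same domination.
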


We will start from the representation formula for the solution that we have obtained in Section \ref{sec:appendixhyperbolic}, and we will use it to show the conditions \ref{ass:A1}-\ref{ass:A5}. 

\begin{proof}[Proof of Theorem \ref{thm:2ndorderhyperbolic}] 
Let us consider the Cauchy problem 
\begin{equation}\label{eq:equation1bis}
	\begin{cases}
		L(t,x,\partial_t,\nabla_x)u(t,x)	= f(t,x), &	(t,x)\in(0,T]\times\Rd,	\\
		u(0,x)= u_0(x),  & x\in\Rd,
		\\
		\partial_tu(0,x) = u_1(x), & x\in\Rd.
	\end{cases}
\end{equation}
Using the relation $D=-\ii\partial$ we restate \eqref{eq:equation1bis} as
\begin{equation}\label{eq:equation1'}
	\begin{cases}
		P(t,x, D_t,D_x)u(t,x)	= - f(t,x), &	(t,x)\in(0,T]\times\Rd,	\\
		u(0,x)= u_0(x),  & x\in\Rd,	\\
		D_tu(0,x) = -\ii u_1(x), & x\in\Rd,
	\end{cases}
\end{equation}
with
\[ P = D_t^2 - \sum_{j,k=1}^d a_{j,k}(t,x)D_{x_j}D_{x_k} + \ii\sum_{j=1}^d b_j(t,x)D_{x_j} + c(t,x). \]
System \eqref{eq:equation1'} is a particular case of \eqref{eq:equation1}, with $n=2$, $a_{\alpha,1}\equiv 0$ for $|\alpha|=1$, $-f$ instead of $f$, $-\ii u_1$ instead of $u_1$, so the hyperbolicity condition \eqref{eq:stricthyperbolicity} corresponds exactly to \eqref{eq:stricthyperbolicity2}. Thus, following Section \ref{sec:appendixhyperbolic} we can construct by 
\eqref{eq:representationuhigh} the following representation of the solution of  \eqref{eq:equation1'} (and 
of \eqref{eq:equation1bis}):
\begin{equation}\label{eq:representationu}
	u(t) = T_0(t)u_0 + T_1(t)u_1 + \int_0^t T_2(t,s)f(s)ds,
\end{equation}
where $T_0(t)=T_0(t,x,D_x)$ is a FIO with symbol of order zero,  $T_1(t)=T_1(t,x,D_x)$ is a FIO with symbol of order $-1$, $T_2(t,s)=T_2(t,s,x,D_x)$ is a FIO with symbol of order $-1$, $u_0$ is the initial value, $u_1$ is the initial velocity, $f$ is the right-hand side.
We formally choose as the right-hand side $f(t,x):=\gamma (t,x)+\sigma(t,x)\dot F(t,x)$. Let $\Lambda(t,\cdot,x,\ast)$ denote the Schwartz kernel of the Fourier integral operator $T_2(t,s)$. So, according to \eqref{eq:mildsolutionSPDE2}, the random-field solution to the SPDE with partial differential operator as in \eqref{1i} is given by 
\begin{align}\label{eq:representationSPDE}
	u(t,x) = \big(T_0(t)u_0 + T_1(t)u_1\big)(x) & + \int_0^t\int_\Rd \Lambda(t,s,x,y)\gamma(s,y)dyds \notag \\
						& + \int_0^t\int_\Rd \Lambda(t,s,x,y)\sigma(s,y)M(ds,dy).
\end{align}
%
Now, since $T_0, T_1$ are FIOs with symbol of order $0$, $-1$ respectively, if $u_0\in H^{r}$ and $u_1\in H^{r-1}$, then $g(t):=T_0(t)u_0 + T_1(t)u_1\in H^r$. Due to the assumption that $2r>d$, we conclude by Sobolev's Embedding Theorem that $g(t)\in\caC(\Rd)$ and therefore, the pointwise evaluatation in \eqref{eq:representationSPDE} makes sense. Moreover, since $t\mapsto g(t)$ is continuous, we have that at every point $(t,x)\in[0,\bar{T}]\times\Rd$, $g(t,x)$ is well-defined, which implies \ref{ass:A5}.

Now we deal with the third term in \eqref{eq:representationu}. We see that $T_2$ (or its Schwartz kernel $\Lambda$) is the fundamental solution to the second-order SPDE with null initial conditions. By its definition in \eqref{eq:representationu}, $T_2(t,s)$ has a symbol in $\caC(\Delta_{\bar{T}},S^{-1})$ since it depends continuously on $E$ (in \eqref{eq:E}) with symbol in $\caC(\Delta_{\bar{T}},S^{0})$ and $M(t,x,D_x)$ with symbol $M(t,x,\xi)\in\caC([0,T],S^0)$ ($M$ is continuous in time because it depends continuously on the characteristic roots, and the characteristic roots of a PDE inherit the regularity with respect to time of the coefficients of the PDE). In fact, $T_2(t,s)$ is uniformly continuous with respect to $t$ and $s$ on $\Delta_{\bar T}$.

With this we can finally show the conditions \ref{ass:A1}-\ref{ass:A4}. In order to show \ref{ass:A1} and \ref{ass:A3} with $\Lambda(t,s)$ being the Schwartz kernel of $T_2(t,s)$, for each $(t,s)\in\Delta_{\bar{T}}$ we invoke Proposition \ref{prop:ftofschwartzkernel} together with \eqref{eq:pointwiseestimate} to see that
\begin{equation}\label{uuuh} 
|\caF_{y\mapsto\eta}\Lambda(t,s,x,\cdot)(\xi)|^2 = |T_2(t,s)(x,-\xi)|^2 \leq C_{t,s}\langle\xi\rangle^{-2},
\end{equation}
where $T_2(t,s)(x,-\xi)$ denotes the symbol of the FIO $T_2(t,s)$ evaluated in $(x,-\xi)$. 
Therefore the conditions \ref{ass:A1} and \ref{ass:A3} become
\begin{align*}
  \int_0^t \sup_{\zeta\in\Rd}\int_\Rd |\caF_{y\mapsto\eta}\Lambda(t,s,x,\cdot)(\eta+\zeta)|^2\mu(d\eta)|\nu_s|^2ds & \\
   \leq \int_0^t C_{t,s}|\nu_s|^2ds &\sup_{\zeta\in\Rd}\int_\Rd \frac{1}{1+|\eta+\zeta|^2}\mu(d\eta), \\
  \int_0^t \sup_{\zeta\in\Rd} |\caF_{y\mapsto\eta}\Lambda(t,s,x,\cdot)(\zeta)|^2|\chi_s|^2ds  \leq \int_0^t C_{t,s}|\chi_s|^2ds &\sup_{\zeta\in\Rd} \frac{1}{1+|\zeta|^2}.
\end{align*}
%

The constants $C_{t,s}$ can be chosen in such a way that they are continuous in $s$ and $t$, because of \eqref{uuuh} and since $T_2(t,s)$ has a symbol in $\caC(\Delta_{\bar{T}},S^{-1})$. Therefore we have that \ref{ass:A1} holds as long as \eqref{eq:condition2} holds and \ref{ass:A3} is always satisfied.

To check the two continuity conditions \ref{ass:A2} and \ref{ass:A4}, it will suffice to show that 
\begin{equation}\label{ourstar}
\sup_{r\in(s,s+h)} |\caF(\Lambda(t,s,x)-\Lambda(t,r,x))(\xi+\eta)|^2\leq \frac{C_{t,s,h}^2}{\langle\xi+\eta\rangle^2} ,
\end{equation}
with $C_{t,s,h}\to 0$ as $h\to 0$ and $C_{t,s,h}\leq C_{\bar T}$ for every $h\in [0,t-s],$ $(t,s)\in\Delta_{\bar T}$.
Indeed, if \eqref{ourstar} holds, then:
\begin{align*}
&\lim_{h\to 0}\int_0^t\bigg(\sup_{\eta\in\Rd} \int_\Rd \sup_{r\in(s,s+h)} |\caF(\Lambda(t,s,x)-\Lambda(t,r,x))(\xi+\eta)|^2 \mu(d\xi)\bigg)|\nu_s|^2ds
\\
&\leq \lim_{h\to 0}\int_0^tC_{t,s,h}^2\bigg(\sup_{\eta\in\Rd} \int_\Rd \langle\xi+\eta\rangle^{-2} \mu(d\xi)\bigg)|\nu_s|^2ds
\\
&= \bigg(\sup_{\eta\in\Rd} \int_\Rd \langle\xi+\eta\rangle^{-2} \mu(d\xi)\bigg) \lim_{h\to 0}\int_0^tC_{t,s,h}^2|\nu_s|^2ds
\\&=0
\end{align*}
via the Dominated Convergence Theorem, thanks to assumption \eqref{eq:condition2}, the fact that $|\nu_s|^2\in L^1[0,T]$ and $C_{t,s,h}\leq C_{\bar T}$. Therefore \ref{ass:A2} holds, and also \ref{ass:A4} corresponding to the particular case $\mu=\delta_0$ in \ref{ass:A2} . 
\\
So, it only remains to check that \eqref{ourstar} holds. But this follows from the uniform continuity of $s\mapsto \caF\Lambda(t,s,\cdot)(\ast)$, formula \eqref{uuuh} and \eqref{eq:pointwiseestimate}.
Indeed, the function $s\mapsto \langle\ast\rangle\caF\Lambda(t,s,\cdot)(\ast)$ is, by \eqref{uuuh}, uniformly continuous on $[0,t]$ with values in the Fr\'echet space $S^{0}(\R^{2d})$ endowed with the norm
$$||a-b||=\displaystyle\sum_{\ell=0}^\infty\frac1{2^\ell}\frac{|a-b|_\ell^{(0)}}{1+|a-b|_\ell^{(0)}}.$$
So its modulus of continuity 
\begin{align*}
\omega_{t,s}(h)=\sup_{r\in(s,s+h)} ||\langle\ast\rangle\caF\Lambda(t,s,\cdot)(\ast)-\langle\ast\rangle\caF\Lambda(t,r,\cdot)(\ast)||\to 0
\end{align*}
as $h\to 0$. By \eqref{eq:pointwiseestimate} with $m=\ell=0$ we get 
{\begin{align}\label{eq:lastgap}
\sup_{r\in (s,s+h)} & |\langle\xi+\eta\rangle\caF\Lambda(t,s,x)(\xi+\eta)-\langle\xi+\eta\rangle\caF\Lambda(t,r,x)(\xi+\eta)|
\notag\\
&\leq \sup_{r\in(s,s+h)}|\langle\ast\rangle\caF\Lambda(t,s,\cdot)(\ast)-\langle\ast\rangle\caF\Lambda(t,r,\cdot)(\ast)|_0^{(0)}\langle\xi+\eta\rangle^0
\notag\\
&= \sup_{r\in(s,s+h)} \bigg( \frac{|\langle\ast\rangle\caF\Lambda(t,s,\cdot)(\ast)-\langle\ast\rangle\caF\Lambda(t,r,\cdot)(\ast)|_0^{(0)}}{1+ |\langle\ast\rangle\caF\Lambda(t,s,\cdot)(\ast)-\langle\ast\rangle\caF\Lambda(t,r,\cdot)(\ast)|_0^{(0)}}\times
\notag\\
& \phantom{mmm} \times(1+|\langle\ast\rangle\caF\Lambda(t,s,\cdot)(\ast)-\langle\ast\rangle\caF\Lambda(t,r,\cdot)(\ast)|_0^{(0)}) \bigg)
\notag\\
&\leq \omega_{t,s}(h)(1+ \sup_{r\in(s,s+h)}|\langle\ast\rangle\caF\Lambda(t,s,\cdot)(\ast)-\langle\ast\rangle\caF\Lambda(t,r,\cdot)(\ast)|_0^{(0)})
\notag\\
&\leq \omega_{t,s}(h)(1+2C_{\bar T}),
\end{align}}
where 
\[ C_{\bar T} := \max_{0\leq s\leq t\leq \bar T}C_{t,s}< \infty,  \]
by \eqref{uuuh} and the fact that $(t,s)\mapsto C_{t,s}$ is continuous on $\Delta_{\bar T}$. Therefore, the term in the last line of \eqref{eq:lastgap} goes to zero as $h\to 0.$ Choosing the constant $C_{t,s,h}=\omega_{t,s}(h)(1+2C_{\bar T})$ we get \eqref{ourstar}.

The proof is complete.
\end{proof}

\begin{remark}
Theorem \ref{thm:2ndorderhyperbolic} holds also if we substitute the assumptions on $\gamma$ with the (presumably) more general assumption $\gamma\in C([0,T], H^{r-1})$. In the latter case, the term $\int_0^t T_2(t,s)\gamma(s)ds$ in \eqref{eq:representationu} can be put into the deterministic function $g(t)\in H^r$, and it is pointwise well defined. In the statement of Theorem \ref{thm:2ndorderhyperbolic} we choose to ask $\gamma$ as in \ref{ass:A3}, that is to consider it in a si\-mi\-lar fashion as the stochastic integral term. The reason for this is that if the problem of nonstationary nonlinear SPDEs (where $\sigma$ and $\gamma$ may depend on the solution $u$) with a general distribution as fundamental solution is solved, then the extension of the results in Theorem \ref{thm:2ndorderhyperbolic} to the nonlinear case is possible.
\end{remark}

\begin{example}[The stochastic wave equation]\label{exwave}
Consider the stochastic wave equation in the whole space $\Rd$ for any spatial dimension $d\in\N$ given by
\begin{equation}\label{eq:wave1}
  \begin{cases}
		\left(\partial_t^2 - \displaystyle\sum_{j=1}^d\partial_{x_j}^2\right)u(t,x) = \gamma(t,x) + \sigma(t,x)\dot{F}(t,x), &\text{ in } (0,T]\times\Rd, \\
    u(0,x) = u_0, 					& \text{ on } \Rd, \\
    \partial_tu(0,x) = u_1, & \text{ on } \Rd.
  \end{cases}
\end{equation}
The symbol of the wave operator is $-\tau^2+|\xi|^2$, so the characteristic roots are given by $\tau=\pm|\xi|$. Note that they do not depend on $t$ and $x$, so the corresponding PDOs $\pm|D_x|$ commute with $D_t$, $D_x$ and functions of these operators. Setting as in Section \ref{sec:2ndorderhyperbolic}
\[ 
\begin{cases}
v_1=\langle D_x\rangle\Lambda
\\
v_2 = (D_t+|D_x|)\Lambda, 
\end{cases}
\]
the equivalent first-order system \eqref{eq:KandR} becomes
\[ \left(\begin{pmatrix} D_t & 0 \\ 0 & D_t \end{pmatrix} + \begin{pmatrix} |D_x| & -\langle D_x\rangle \\ 0 & -|D_x| \end{pmatrix}\right)\begin{pmatrix} v_1 \\ v_2 \end{pmatrix} = \begin{pmatrix} 0 \\ -f \end{pmatrix}, \]
with initial conditions $v_1(0)= \langle D_x\rangle u_0$ and $v_2(0)= -\ii u_1 + |D_x|u_0$. Note that the residual term $\caR$ in \eqref{eq:KandR} is not present. Now we diagonalize this system using the matrix in \eqref{emmij}, which has the form
$$\begin{pmatrix} 1 & m(\xi) \\ 0 & 1 \end{pmatrix},\quad m(\xi)=\langle\xi\rangle/(2|\xi|).$$
So the diagonalized system becomes with the notation $W=M^{-1}V$
\begin{align}\label{eq:wave2}
  \left(\begin{pmatrix} D_t & 0 \\ 0 & D_t \end{pmatrix} + \begin{pmatrix} |D_x| & 0 \\ 0 & -|D_x| \end{pmatrix}\right)\begin{pmatrix} w_1 \\ w_2 \end{pmatrix} & = \begin{pmatrix} mf \\ -f \end{pmatrix},
\end{align}
with initial conditions $w_1(0) = \langle D_x\rangle u_0 + \ii mu_1 - m|D_x|u_0$ and $w_2(0) = -\ii u_1 + |D_x|u_0$.

Define now $\phi_\pm$ to be the solutions to the eikonal equations which have the form
\[ \partial_t\phi_\pm(t,s,x,\xi) = \mp |\nabla_x\phi_\pm(t,s,x,\xi)|, \]
with the initial condition $\phi_\pm(s,s,x,\xi)=x\cdot\xi$. One can solve these PDEs explicitly to obtain the solutions $\phi_\pm(t,s,x,\xi) = x\cdot\xi\mp(t-s)|\xi|$ for all $0\leq s\leq t\leq T$ and all $x,\xi\in\Rd$. Moreover, $\bar{T}=T$. Now we set
\[ I_\phi:= \begin{pmatrix} I_{\phi_+} & 0 \\ 0 & I_{\phi_-} \end{pmatrix}, \]
where $I_{\phi_\pm}$ is the FIO having phase function $\phi_\pm$ and symbol $1$. One can compute for all $0\leq s\leq t\leq T$ that
\begin{align*}
  \left(\begin{pmatrix} D_t & 0 \\ 0 & D_t \end{pmatrix} + \begin{pmatrix} |D_x| & 0 \\ 0 & -|D_x| \end{pmatrix}\right)I_\phi(t,s) & = 0, \\
	I_\phi(s,s) & = \id,
\end{align*}
so that $W_1$ in \eqref{eq:W_1} is identical to zero, which means that in \eqref{heart} there is no residual $\caR$ and $I_\phi$ is the fundamental solution to this first-order system. Now Duhamel's formula implies that the solution to the system \eqref{eq:wave2} is
\begin{align*}
  \begin{pmatrix} w_1(t) \\ w_2(t) \end{pmatrix}
  = & \begin{pmatrix} I_{\phi_+}(t,0)\big(\langle D_x\rangle u_0 + \ii mu_1 - m|D_x|u_0\big) \\ I_{\phi_-}(t,0)\big(- \ii u_1 + |D_x|u_0\big) \end{pmatrix} \\
    & \phantom{mmmmmmmmm} + \ii\int_0^t \begin{pmatrix} I_{\phi_+}(t,\theta)(mf)(\theta) \\ -I_{\phi_-}(t,\theta)(f)(\theta)\end{pmatrix}d\theta.
\end{align*}
The solution $u$ to the wave equation \eqref{eq:wave1} with initial conditions $u_0$ and $u_1$ and a right-hand side $f$ can so be represented by using \eqref{eq:representationu} in the following way
\[  u(t) = T_1(t)u_0 + T_2(t)u_1 + \int_0^t T_3(t,s)f(s)ds, \]
where
\begin{align*}
  T_1(t) 		& = \langle D_x\rangle^{-1}\left[I_{\phi_+}(t,0)(\langle D_x\rangle - m\lambda) + mI_{\phi_-}(t,0)\lambda\right], \\
	T_2(t)		& = \ii\langle D_x\rangle^{-1}\left[I_{\phi_+}(t,0)m - mI_{\phi_-}(t,0)\right], \\
	T_3(t,s)	&	= \ii\langle D_x\rangle^{-1}\left[I_{\phi_+}(t,s)m - mI_{\phi_-}(t,s)\right].
\end{align*}
Due to the multiplication formulas in Proposition \ref{prop:productsPDOFIO} we can compute
\begin{align*}
  T_3(t,s)
  & = \ii \int_\Rd \e^{\ii x\cdot\xi - \ii (t-s)|\xi|} \frac{1}{2|\xi|}\dbar\xi - \ii \int_\Rd \e^{\ii x\cdot\xi + \ii (t-s)|\xi|} \frac{1}{2|\xi|}\dbar\xi \\
	& = \int_\Rd	\e^{\ii x\cdot\xi} \frac{1}{|\xi|}\frac{\e^{\ii (t-s)\xi}- e^{-\ii (t-s)\xi} }{2\ii}\dbar\xi \\
	& = \int_\Rd \e^{\ii x\cdot\xi} \frac{\sin((t-s)|\xi|)}{|\xi|}\dbar\xi,
\end{align*}
and $T_2(t) = T_3(t,0)$. We can see that this is a PDO with symbol $\sin(t|\xi|)/|\xi|$. On the other hand, $T_1(t)$ becomes
\begin{align*}
  T_1(t)
  & = \int_\Rd \e^{\ii x\cdot\xi - \ii t|\xi|} \frac{1}{\langle\xi\rangle} \bigg(\langle\xi\rangle - \frac{1}{2}\frac{\langle\xi\rangle}{|\xi|}|\xi|\bigg)\dbar\xi + \int_\Rd \e^{\ii x\cdot\xi + \ii t|\xi|}\frac{1}{\langle\xi\rangle}\frac{1}{2}\frac{\langle\xi\rangle}{|\xi|}|\xi|\dbar\xi \\
  & = \frac{1}{2}\int_\Rd \e^{\ii x\cdot\xi - \ii t|\xi|}\dbar\xi + \frac{1}{2}\int_\Rd \e^{\ii x\cdot\xi + \ii t|\xi|}\dbar\xi \\
  & = \int_\Rd \e^{\ii x\cdot\xi}\frac{\e^{\ii t|\xi|} + \e^{-\ii t|\xi|}}{2}\dbar\xi \\
  & = \int_\Rd \e^{\ii x\cdot\xi}\cos(t|\xi|)\dbar\xi \\
  & = \partial_t\int_\Rd \e^{\ii x\cdot\xi}\frac{\sin(t|\xi|)}{|\xi|}\dbar\xi.
\end{align*}
We can see that in this case all the three operators involve the inverse Fourier transform of $\sin(t|\xi|)/|\xi|$ or its derivative. Now letting $u_0=u_1=0$ and the right hand side of \eqref{eq:wave1} equal to $\delta_{0,0}$ we get $u(t)=T_3(t,0)\delta_0$ and for all $v\in\caS(\Rd)$ we have
\[\langle u(t), v\rangle=\langle\int_\Rd \e^{\ii x\cdot\xi} \frac{\sin(t|\xi|)}{|\xi|}\dbar\xi,v\rangle,\]
recovering the fact that the fundamental solution is the inverse Fourier transform of $\sin(t|\xi|)/|\xi|$.
\end{example}

\subsection{Stochastic higher-order hyperbolic equations}\label{sec:higherorderhyperbolic}
In this section we give a generalization of the solution theory presented in Section \ref{sec:2ndorderhyperbolic}. We treat higher order equations of the form
\begin{equation}\label{eq:higherorderSPDE}
P(t,x,D_t,D_x)u(t,x) = \gamma(t,x)+\sigma(t,x)\dot{F}(t,x),
\end{equation}
where for $n\in\N$, $n\geq 2$, $P$ is defined in \eqref{eq:higherorderPDO}, with some suitable coefficients $a_{\alpha,j}$, see Theorem \ref{thm:morderhyperbolic} below. As in Section \ref{sec:2ndorderhyperbolic}, we assume $P$ to be strictly hyperbolic asking \eqref{stricthyphigh1} and\eqref{stricthyphigh2} to be satisfied. Note that this section provides a generalization of Section \ref{sec:2ndorderhyperbolic} also in the case $n=2$, since here we allow for terms of type $a_{\alpha,1}(t,x)D_x^\alpha D_t$ with $|\alpha|=1$.

The result of this section is the following.

\begin{theorem}\label{thm:morderhyperbolic}
Let us consider an SPDE \eqref{eq:higherorderSPDE} where the partial differential operator $P$ is of the form \eqref{eq:higherorderPDO} with coefficients $a_{\alpha,j}\in C^{n-1}([0,T];\caC^{\infty}_b(\R^d))$ for $|\alpha|= n-j$, $a_{\alpha,j}\in C([0,T];\caC^{\infty}_b(\R^d))$ for $|\alpha|< n-j$, $0\leq j\leq n-1.$
Suppose that $P$ is a strictly hyperbolic operator, i.e. \eqref{stricthyphigh1} and \eqref{stricthyphigh2} hold. Assume for the initial conditions that $D_t^{j}u(0)=:u_j\in H^{r-j}(\Rd)$ $0\leq j\leq n-1$, where $2r>d$. Furthermore, assume that $\gamma$ and $\sigma$ are as in Theorem \ref{thm:2ndorderhyperbolic} and that
\begin{equation}\label{eq:higherordercondition}
  \sup_{\eta\in\Rd}\int_\Rd\frac{1}{(1+|\xi+\eta|^2)^{n-1}}\mu(d\xi) < \infty.
\end{equation}

Then, for some time horizon $0<\bar{T}\leq T$, the random-field solution of the SPDE \eqref{eq:higherorderSPDE} with partial differential operator \eqref{eq:higherorderPDO} is well-defined.
\end{theorem}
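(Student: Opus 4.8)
The plan is to reproduce, almost verbatim, the argument used for Theorem~\ref{thm:2ndorderhyperbolic}, now invoking the full $n$-th order construction of Section~\ref{sec:appendixhyperbolic} in place of the second-order reduction. First I would set, formally, $f(t,x):=\gamma(t,x)+\sigma(t,x)\dot F(t,x)$ as the right-hand side of the Cauchy problem \eqref{eq:equation1} with $P$ as in \eqref{eq:higherorderPDO}, and apply the three-step procedure of Section~\ref{sec:appendixhyperbolic}: reduce to the first-order system \eqref{tlachi}, construct on a possibly smaller time horizon $0<\bar{T}\le T$ the fundamental solution $E(t,s)$ of that system via \eqref{eq:E}, and transform back to obtain the representation \eqref{eq:representationuhigh},
\[
  u(t)=\sum_{\ell=0}^{n-1}T_\ell(t)u_\ell+\int_0^t T_n(t,\theta)f(\theta)\,d\theta,
\]
where $T_\ell(t)$ is a FIO with symbol of order $-\ell$ and $T_n(t,s)$ is a FIO with symbol of order $-(n-1)$, both depending continuously on $(t,s)\in\Delta_{\bar{T}}$ because $E$ and the diagonalizer $M$ do (this is where the $C^{n-1}$-in-time regularity of the top-order coefficients is used: the characteristic roots $\lambda_j$, and hence the entries \eqref{emmij}, inherit that regularity). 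Choosing $f$ as above then gives the candidate random-field solution in the form \eqref{eq:representationSPDE}, with $\Lambda(t,s,x,\cdot)$ the Schwartz kernel of $T_n(t,s)$.

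The remaining task is to check \ref{ass:A1}--\ref{ass:A5} and conclude by Theorem~\ref{thm:existenceanduniquenessconstant}. For \ref{ass:A5}: since $u_\ell\in H^{r-\ell}$ and $T_\ell(t)$ has symbol of order $-\ell$, Proposition~\ref{prop:continuityFIO} gives $T_\ell(t)u_\ell\in H^r$, so $g(t):=\sum_{\ell=0}^{n-1}T_\ell(t)u_\ell\in H^r$; as $2r>d$, Sobolev's embedding theorem makes $g(t)$ continuous, hence $g(t,x)$ finite at every $(t,x)$, and $t\mapsto g(t)$ continuous. For \ref{ass:A1} and \ref{ass:A3}: Proposition~\ref{prop:ftofschwartzkernel} together with \eqref{eq:pointwiseestimate} applied to a symbol of order $-(n-1)$ yields
\[
  |\caF_{y\mapsto\eta}\Lambda(t,s,x,\cdot)(\xi)|^2=|\sigma(T_n(t,s))(x,-\xi)|^2\le C_{t,s}\langle\xi\rangle^{-2(n-1)},
\]
with $C_{t,s}$ continuous on $\Delta_{\bar{T}}$; substituting this into \eqref{eq:condition1} and \eqref{eq:condition4} reduces both to $\sup_{\eta\in\Rd}\int_\Rd(1+|\xi+\eta|^2)^{-(n-1)}\mu(d\xi)<\infty$, which is exactly hypothesis \eqref{eq:higherordercondition}, while the $\eta$-free bound required by \ref{ass:A3} is automatic. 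For the continuity conditions \ref{ass:A2} and \ref{ass:A4}, continuity of $(t,s)\mapsto\Lambda(t,s,x,\cdot)$ together with the uniform dominating function $k(y):=C\langle y\rangle^{-(n-1)}$ (with $C$ the maximum of $\sqrt{C_{t,s}}$ over $\Delta_{\bar{T}}$) lets one pass to the limit $h\downarrow 0$ by dominated convergence, exactly as in the proof of Theorem~\ref{thm:2ndorderhyperbolic}; the pathwise-integral conditions are the same estimate with $\mu$ replaced by $\delta_0$, hence also hold. The case in which $\gamma$ or $\sigma$ do not depend on $x$ is handled by the primed assumptions in an identical way.

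I expect the only genuinely delicate point to be the bookkeeping of symbol orders through the reduction: one has to be sure that the composition $\langle D_x\rangle^{-(n-1)}m_{i,k}(t)e_{k,h}(t,\theta)m_{h,n}^{-1}(\theta)$ defining $T_n$ indeed has order $-(n-1)$ --- the gain of $n-1$ derivatives comes solely from the factor $\langle D_x\rangle^{-(n-1)}$ in \eqref{eq:goingbacktouhigh}, since $E$, the $m_{i,k}$ and $M^{-1}$ all have order $0$ --- and, relatedly, that $\bar{T}$ can be chosen small enough that all $n$ eikonal equations \eqref{eq:eikonalequation} are simultaneously solvable and the iterative bound \eqref{claim} holds. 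Note that, compared with $n=2$, the faster decay $\langle\xi\rangle^{-(n-1)}$ of $\caF\Lambda$ is precisely what allows the \emph{weaker} spectral assumption \eqref{eq:higherordercondition} in place of \eqref{eq:condition2} with $\nu=1$, so the class of admissible correlation measures enlarges as $n$ grows. Everything else is a line-by-line transcription of the second-order argument.
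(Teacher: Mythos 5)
Your proposal is correct and follows essentially the same route as the paper: start from the representation \eqref{eq:representationuhigh}, treat $g(t)=\sum_{\ell=0}^{n-1}T_\ell(t)u_\ell\in H^r$ exactly as in Theorem \ref{thm:2ndorderhyperbolic}, and exploit that $T_n(t,s)$ has order $-(n-1)$ so that $|\caF_{y\mapsto\eta}\Lambda(t,s,x,\cdot)(\xi)|^2\leq C_{t,s}\langle\xi\rangle^{-2(n-1)}$, which reduces \ref{ass:A1}--\ref{ass:A4} to \eqref{eq:higherordercondition}. Your added bookkeeping (where the gain of $n-1$ derivatives comes from, continuity in $(t,s)$, the dominating function for \ref{ass:A2}/\ref{ass:A4}) is exactly what the paper leaves implicit by referring back to the second-order proof.
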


\begin{proof} 
Let us start with the representation of the solution $u(t,x)$ of \eqref{eq:equation1} obtained in \eqref{eq:representationuhigh}:
\begin{align}\nonumber
u(t) =& \sum_{\ell=0}^{n-1}T_\ell(t)u_\ell+\int_0^t T_n(t,\theta)f(\theta)d\theta.
\end{align}
with $T_\ell(t)=T_\ell(t,x,D_x)$ FIOs of order $-\ell$ for all $0\leq \ell\leq n-1$, and $T_n(t,s)=T_n(t,s,x,D_x)$  FIO of order $-(n-1)$. 

The term $g(t)=\sum_{\ell=0}^{n-1}T_\ell(t)u_\ell\in H^r$ since $u_\ell\in H^{r-\ell}$, so this term can be treated as the corresponding one in the proof of Theorem \ref{thm:2ndorderhyperbolic}. As for the term $T_n(t,s)$, it can be handled exectly as $T_2(t,s)$ of Theorem \ref{thm:2ndorderhyperbolic}, with the only difference that $T_n(t,s)$ has order $-(n-1)$; letting $\Lambda(t,s)$  be the Schwartz kernel of $T_n(t,s)$, we come to
\begin{equation}\label{yeah} |\caF_{y\mapsto\eta}\Lambda(t,s,x,\cdot)(\xi)|^2 = |\sigma(T_n(t,s))(x,-\xi)|^2 \leq C_{t,s}\langle\xi\rangle^{-2(n-1)}, 
\end{equation}
and then the well-definedness of a random field solution follow as in the proof of  Theorem \ref{thm:2ndorderhyperbolic}.
\end{proof}

The condition \eqref{eq:higherordercondition} has already been seen in \cite{dalangmuller} when dealing with higher-order beam equations.

\subsection{Stochastic second-order hyperbolic equations - the case of weak hyperbolicity}\label{sec:weakhyperbolic}
In this section we show that the assumption of strict hyperbolicity \eqref{eq:stricthyperbolicity} on equation \eqref{1i} is an important one. In fact, if this assumption does not hold, the associated PDE might not be well posed, neither in $C^\infty$ (or in usual Sobolev spaces), see \cite{CS}, nor in weighted Sobolev spaces, see \cite{M}. However, results of "well-posedness with loss of derivatives" in Sobolev spaces can be obtained under suitable assumptions in the case of weakly hyperbolic equations, i.e.\ equations having real characteristic roots which are not necessarily distinct and separate at every time, see for example \cite{AC} and the references therein.
Here \emph{well-posedness with loss of derivatives} means that in this case the fundamental solution $E(t,s)$ results to be a Fourier integral operator of order $\delta>0$, and so by Duhamel's formula \eqref{duham} one obtains a solution $U$ which is less regular then the data, i.e.\ if the right-hand side $G$ and the initial data are in $H^r$ for some $r\geq 0$, then the solution $U$ is in $H^{r-\delta}$, as proved for instance in \cite{AC1,AC2}.

This leads to the conclusion that without the assumption in \eqref{eq:stricthyperbolicity} the Fourier transform of the fundamental solution might not behave as $\langle\xi\rangle^{-1}$ as shown in the previous section, but only as $\langle\xi\rangle^{-\kappa}$ with $\kappa\in(0,1)$.

In this section we give an example of a weakly hyperbolic equation with fundamental solution that satisfies \eqref{eq:condition2} with some $\kappa\in[0,1)$. In this section we follow the ideas from \cite{AC1}, but we have to keep a much tighter control over the constants, since their size is crucial at the end.

Let us so consider the following SPDE in spatial dimension $d=1$:
\begin{equation}\label{eq:equationweak}
  \begin{cases}
    & \left(\partial_t^2 - t^k\partial_x^2 + ct^{k\rho}\partial_x\right)u(t,x) = \dot{F}(t,x),\\
		& u(0,x) = 0,\\
		& \partial_tu(0,x) = 0,
  \end{cases}
\end{equation}
where $(t,x)\in [0,1]\times\R$, $k\in\N$ with $k\geq2$, $\rho=\frac12-\frac1k$ and $c>0$ is a constant, sufficiently small, that we will set later. Then we know by \cite[Theorem 1.1]{AC1} that the associated PDE to problem \eqref{eq:equationweak} is well-posed in Sobolev spaces with loss of derivatives, and the fundamental solution $\Lambda$ exists. Therefore, the solution to problem \eqref{eq:equationweak} is defined by
\[ u(t,x) = \int_0^t\int_\R \Lambda(t,s,x-y)M(ds,dy), \]
since the coefficients and the right-hand side of the PDE corresponding to \eqref{eq:equationweak} do not depend on the spatial argument. In this section we will construct the fundamental solution $\Lambda$ and derive an upper estimate as in \eqref{eq:condition2}.

Equation \eqref{eq:equationweak} can be reformulated using $D=-\ii\partial$:
\[	\begin{cases}
			&\left(D_t^2 - t^kD_x^2 - c\ii t^{k\rho} D_x\right)u(t,x) = - f(t,x), \\
			& u(0,x) = 0,\\
			&D_tu(0,x) = 0,
		\end{cases} \]
where the right-hand side is formally given by $f=\dot F$.

We follow the ideas of Section \ref{sec:appendixhyperbolic}; since we deal with a second order equation, we explicitly compute the characteristic roots of the partial differential operator in \eqref{eq:equationweak}. Its principal symbol is given by $\tau^2-t^k\xi^2$, so the characteristic roots are $\lambda(t,\xi) = \pm t^{k/2}|\xi|$ for all $(t,x)\in[0,1]\times\R$. Since these roots coincide (and vanish) at $t=0$ for every fixed $\xi\neq 0$ (i.e. equation \eqref{eq:equationweak} is not strictly hyperbolic), to let the ideas of Section \ref{sec:appendixhyperbolic} work we separate the roots by defining the following "approximated characteristic roots":
\[ \tilde{\lambda}(t,\xi) := \pm \sqrt{t^k+\langle\xi\rangle^{-2}}\cdot|\xi| = \pm\sqrt{1+t^k\langle\xi\rangle^2}\cdot\langle\xi\rangle^{-1}|\xi|. \]
Moreover, we set
\[ \zeta(t,\xi):=\sqrt{1+t^k\langle\xi\rangle^2}, \]
and easily see that $\zeta\in\caC([0,1],S^1)$.

Now we define, using the approximated characteristic roots instead of the characteristic roots (compare with \eqref{reductiongeneral} in the case of strict hyperbolicity)
\[
	\begin{cases}
		v_1(t,x) & := \zeta(t,D_x)u(t,x) \\
		v_2(t,x) & := (D_t + \tilde{\lambda}(t,D_x))u(t,x),
	\end{cases}
\]
and by performing similar calculations as in Section \ref{sec:appendixhyperbolic}, we obtain
\begin{align*}
	\big(D_t + \tilde{\lambda}(t,D_x)\big)v_1 
	& = \zeta(t,D_x)\big(D_t + \tilde{\lambda}(t,D_x)\big)u - \ii\frac{\partial\zeta}{\partial t}(t,D_x)u \\
	& = \zeta(t,D_x)v_2 - R_0v_1,
\end{align*}
where $R_0=R_0(t,D_x)$ is the PDO with symbol
\begin{equation}\label{eq:R0}
   r_0(t,\xi) = \frac{\ii kt^{k-1}\langle\xi\rangle^2}{2(1+t^k\langle\xi\rangle^2)} = \frac{\ii kt^{k-1}}{2(t^k+\langle\xi\rangle^{-2})},
\end{equation}
and
\[\big(D_t -  \tilde{\lambda}(t,D_x)\big)v_2 = -f(t,x) - N_0(t,D_x)v_1,\]
where $N_0(t,D_x)$ is a PDO with symbol
\begin{equation}\label{eq:N0}
  n_0(t,\xi) = -\frac{c\ii t^{k\rho} \xi}{\langle \xi\rangle\sqrt{t^k + \langle\xi\rangle^{-2}}} + \frac{\ii kt^{k-1}|\xi|}{2(t^k + \langle \xi\rangle^{-2})\langle \xi\rangle} + \frac{\xi^2}{\langle \xi\rangle^2\zeta(t,\xi)}.
\end{equation}
So we obtain the equivalent first-order system
\begin{equation}\label{eq:sec4system1}
	\begin{cases}
		\bfP V(t)		& =	G,\quad\text{in } [0,1],\\
		V(0)				& = 0,
	\end{cases}
\end{equation}
where $V=(v_1,v_2)$, $G=(0,-f)$, and
\[ \bfP = D_t + \begin{pmatrix}\tilde{\lambda}(t,D_x) & -\zeta(t,D_x) \\ 0 & -\tilde{\lambda}(t,D_x) \end{pmatrix} + \begin{pmatrix} R_0(t,D_x) & 0 \\ N_0(t,D_x) & 0 \end{pmatrix}. \]

Now, before diagonalizing the system, we investigate the order of the two symbols in \eqref{eq:R0} and \eqref{eq:N0}. For this we need three lemmas, the first one being an important integral inequality.

\begin{lemma}\label{lem:integralinequality}
	For all $\alpha,\beta,\delta>0$ we have
  \[ \int_0^1 \frac{t^\alpha}{(t^\delta + \langle\xi\rangle^{-2})^\beta} dt
  \leq \begin{cases}
  				C_{\alpha,\beta,\delta} & \text{for }\alpha-\beta\delta>-1, \\
         \frac{1}{\alpha+1} + \log\big(\langle\xi\rangle^{2\beta/(\alpha+1)}\big) & \text{for }\alpha-\beta\delta=-1, \\
         C_{\alpha,\beta,\delta}\langle\xi\rangle^{2(\beta\delta-\alpha-1)/\delta} & \text{for }\alpha-\beta\delta<-1.
       \end{cases} \]
\begin{proof}
  By separating the domain of integration into $[0,h]$ and $[h,1]$, where $h:= \langle\xi\rangle^{-2\beta/(\alpha+1)}$ we obtain
  \[ \int_0^t \frac{t^\alpha}{(t^\delta + \langle\xi\rangle^{-2})^\beta} dt \leq \langle\xi\rangle^{2\beta}\int_0^h t^\alpha dt + \int_h^1 t^{\alpha-\beta\delta}dt \leq \frac{1}{\alpha+1} + \frac{1}{\alpha-\beta\delta+1}, \]
  if $\alpha-\beta\delta>-1$; if $\alpha-\beta\delta=-1$, then
  \[ \int_0^t \frac{t^\alpha}{(t^\delta + \langle\xi\rangle^{-2})^\beta} dt \leq \frac{1}{\alpha+1} + \log\big(h^{-1}\big).\]
  In the case when $\alpha-\beta\delta<-1$, we obtain by the change of variable $t^\delta + \langle\xi\rangle^{-2}\mapsto s$
\begin{align*}
  \int_0^1 \frac{t^\alpha}{(t^\delta + \langle\xi\rangle^{-2})^\beta} dt
  & = \frac{1}{\delta}\int_{\langle\xi\rangle^{-2}}^{1+\langle\xi\rangle^{-2}} \frac{(s-\langle\xi\rangle^{-2})^{\alpha/\delta+1/\delta-1}}{s^\beta} ds \\
  & \leq  \frac{1}{\delta}\int_{\langle\xi\rangle^{-2}}^{1+\langle\xi\rangle^{-2}} s^{(\alpha+1-\delta-\beta\delta)/\delta} ds \\
  & = \frac{1}{\alpha+1-\beta\delta}\Big((1+\langle\xi\rangle^{-2})^{(\alpha+1-\beta\delta)/\delta} - \langle\xi\rangle^{-2(\alpha+1-\beta\delta)/\delta}\Big) \\
  & \leq \frac{1}{\beta\delta-\alpha-1}\langle\xi\rangle^{2(\beta\delta-\alpha-1)/\delta}. \qedhere
\end{align*}
\end{proof}
\end{lemma}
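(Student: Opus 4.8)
The plan is to estimate the integral by exploiting that on the denominator $t^\delta+\langle\xi\rangle^{-2}$ one of the two summands dominates depending on whether $t$ is small or large, and then to split $[0,1]=[0,h]\cup[h,1]$ at a well-chosen threshold $h$. The convenient choice is $h:=\langle\xi\rangle^{-2\beta/(\alpha+1)}$: on $[0,h]$ I discard $t^\delta\ge0$ to get $(t^\delta+\langle\xi\rangle^{-2})^{-\beta}\le\langle\xi\rangle^{2\beta}$, while on $[h,1]$ I discard $\langle\xi\rangle^{-2}\ge0$ to get $(t^\delta+\langle\xi\rangle^{-2})^{-\beta}\le t^{-\beta\delta}$. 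The head then contributes $\langle\xi\rangle^{2\beta}\int_0^h t^\alpha\,dt=\langle\xi\rangle^{2\beta}h^{\alpha+1}/(\alpha+1)=1/(\alpha+1)$, a constant, precisely because the exponents cancel by the choice of $h$.

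The three cases are distinguished by the behaviour of the tail $\int_h^1 t^{\alpha-\beta\delta}\,dt$. If $\alpha-\beta\delta>-1$, this is bounded by $\int_0^1 t^{\alpha-\beta\delta}\,dt=1/(\alpha-\beta\delta+1)$, so the whole integral is a $\xi$-independent constant $C_{\alpha,\beta,\delta}$. If $\alpha-\beta\delta=-1$, the tail is $\int_h^1 t^{-1}\,dt=\log(1/h)=\log\big(\langle\xi\rangle^{2\beta/(\alpha+1)}\big)$, which together with the head $1/(\alpha+1)$ gives exactly the claimed logarithmic bound.

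The remaining case $\alpha-\beta\delta<-1$ is where the naive split is too lossy (the tail integral diverges at $0$), so here I would instead substitute $s=t^\delta+\langle\xi\rangle^{-2}$, turning the integral into $\tfrac1\delta\int_{\langle\xi\rangle^{-2}}^{1+\langle\xi\rangle^{-2}}(s-\langle\xi\rangle^{-2})^{(\alpha+1)/\delta-1}s^{-\beta}\,ds$. I then bound $(s-\langle\xi\rangle^{-2})^{(\alpha+1)/\delta-1}\le s^{(\alpha+1)/\delta-1}$ — legitimate because the exponent $(\alpha+1)/\delta-1$ is $>-1$ (equivalently $\alpha+1>0$), which is where the hypothesis $\alpha>0$ is genuinely used — integrate the resulting pure power $s^{(\alpha+1-\delta-\beta\delta)/\delta}$ explicitly, and finally drop the positive subtracted term $\langle\xi\rangle^{-2(\alpha+1-\beta\delta)/\delta}$ and bound $(1+\langle\xi\rangle^{-2})^{(\alpha+1-\beta\delta)/\delta}$ by $C\langle\xi\rangle^{2(\beta\delta-\alpha-1)/\delta}$, the dominant factor being exactly as claimed.

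The main obstacle is recognizing that the last case really requires the change of variables linearizing the denominator rather than the elementary split used in the first two cases, and then checking that after the crude bound $(s-\langle\xi\rangle^{-2})^{(\alpha+1)/\delta-1}\le s^{(\alpha+1)/\delta-1}$ the integrand remains integrable near $s=\langle\xi\rangle^{-2}$; everything else reduces to routine bookkeeping with integrals of power functions.
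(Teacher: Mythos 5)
Your first two cases reproduce the paper's proof exactly: the split at $h=\langle\xi\rangle^{-2\beta/(\alpha+1)}$, the bounds $\langle\xi\rangle^{2\beta}$ on $[0,h]$ and $t^{-\beta\delta}$ on $[h,1]$, and the resulting constant and logarithm are the same, and in the third case you use the same substitution $s=t^\delta+\langle\xi\rangle^{-2}$ as the printed proof. The one genuine flaw is your justification of the crude bound $(s-\langle\xi\rangle^{-2})^{(\alpha+1)/\delta-1}\le s^{(\alpha+1)/\delta-1}$. Since $0\le s-\langle\xi\rangle^{-2}\le s$, this pointwise comparison is valid if and only if the exponent $(\alpha+1)/\delta-1$ is nonnegative, i.e.\ $\alpha+1\ge\delta$; if the exponent is negative the inequality reverses. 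The condition $(\alpha+1)/\delta-1>-1$ (equivalently $\alpha+1>0$) that you invoke only guarantees integrability of $(s-\langle\xi\rangle^{-2})^{(\alpha+1)/\delta-1}$ near the lower endpoint, not the domination you need, so ``$>-1$'' is not the right reason and this is not where positivity of $\alpha$ enters. The paper performs the same step without comment, so its argument carries the same implicit restriction $\alpha+1\ge\delta$: this is harmless in the application of Lemma~\ref{lem:4.2} (there $\alpha=k-j$, $\delta=k$, $j\in\{0,1\}$, so the exponent is $(1-j)/k\ge0$), but for instance in the estimate of $\tilde\lambda-\lambda$ the third case is used with $\alpha=0$, $\delta=k>2$, where the exponent is negative and the comparison as written fails.

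The conclusion itself is true for all $\alpha,\beta,\delta>0$ with $\alpha-\beta\delta<-1$, and the cleanest repair avoids the substitution altogether: split at $t_0:=\langle\xi\rangle^{-2/\delta}$, i.e.\ where $t^\delta=\langle\xi\rangle^{-2}$. On $[0,t_0]$ use $(t^\delta+\langle\xi\rangle^{-2})^{-\beta}\le\langle\xi\rangle^{2\beta}$, which gives $\langle\xi\rangle^{2\beta}t_0^{\alpha+1}/(\alpha+1)=\frac{1}{\alpha+1}\langle\xi\rangle^{2(\beta\delta-\alpha-1)/\delta}$; on $[t_0,1]$ use $(t^\delta+\langle\xi\rangle^{-2})^{-\beta}\le t^{-\beta\delta}$, and since $\alpha-\beta\delta<-1$ the tail is at most $\frac{1}{\beta\delta-\alpha-1}t_0^{\alpha-\beta\delta+1}=\frac{1}{\beta\delta-\alpha-1}\langle\xi\rangle^{2(\beta\delta-\alpha-1)/\delta}$. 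Both pieces have exactly the claimed order, so the lemma follows in full generality; you should either add this argument (or the hypothesis $\alpha+1\ge\delta$) rather than rely on the stated justification.
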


The other two lemma give bounds on the derivatives of some of the terms in the symbols of $R_0$ and $N_0$.

\begin{lemma}\label{lem:4.2}
	For all $l\in\N_0$, and $j\in\{0,1\}$
	\[ \int_0^1\bigg|\partial_\xi^l \frac{t^{k-j}}{t^k+\langle\xi\rangle^{-2}}\bigg|dt \leq C_l\langle\xi\rangle^{-l}, \]
	where $C_l\leq l\cdot 2^{l+2}$.
\begin{proof}
An induction shows that the partial derivatives can be bounded form above by
\[ \partial_\xi^l \frac{t^{k-j}}{t^k+\langle\xi\rangle^{-2}} \leq \tilde{C}_l\sum_{m=1}^l \langle\xi\rangle^{-l-2m} \frac{t^{k-j}}{(t^k + \langle\xi\rangle^{-2})^{m+1}}, \]
where $\tilde{C}_l\leq 2^{l+2}$. Then integrating this over $[0,1]$ and using Lemma \ref{lem:integralinequality} with $\alpha=k-j$, $\beta=m+1$ and $\delta=k$, we get
\begin{align*}
	\int_0^1\bigg|\partial_\xi^l \frac{1}{t^k+\langle\xi\rangle^{-2}}\bigg|dt
	& \leq \tilde{C}_l\sum_{m=1}^l \langle\xi\rangle^{-l-2m} \int_0^1 \frac{t^{k-j}}{(t^k + \langle\xi\rangle^{-2})^{m+1}} dt \\
	& \leq \tilde{C}_l\langle\xi\rangle^{-l+2\frac{j-1}{k}}\sum_{m=1}^l\frac{1}{mk+j-1} \\
	& \leq l\cdot \tilde{C}_l \langle\xi\rangle^{-l}. \qedhere
\end{align*}
\end{proof}
\end{lemma}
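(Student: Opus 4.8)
The plan is to split the proof into two steps: first a pointwise estimate on the $\xi$-derivatives of the integrand, then integration in $t$ by means of Lemma~\ref{lem:integralinequality}. Since the factor $t^{k-j}$ does not depend on $\xi$, it suffices to control $\partial_\xi^l\bigl((t^k+\langle\xi\rangle^{-2})^{-1}\bigr)$, then multiply by $t^{k-j}$ and integrate over $[0,1]$. Note that the estimate is meaningful only for $l\ge1$ (the stated constant $l\cdot2^{l+2}$ vanishes at $l=0$): for $l=0$ one would instead obtain a genuine positive constant, coming directly from the first branch of Lemma~\ref{lem:integralinequality} when $j=0$ and from the second branch, up to a harmless logarithm, when $j=1$; so I concentrate on $l\ge1$.

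\textbf{Step 1: pointwise bound.} I would prove by induction on $l\ge1$ that
\[ \Bigl|\partial_\xi^l \frac{1}{t^k+\langle\xi\rangle^{-2}}\Bigr| \;\le\; \tilde C_l \sum_{m=1}^l \langle\xi\rangle^{-l-2m}\,\frac{1}{(t^k+\langle\xi\rangle^{-2})^{m+1}}, \]
for a constant $\tilde C_l$ depending only on $l$; to recover the statement one needs $\tilde C_l\le2^{l+2}$. The base case $l=1$ is the identity $\partial_\xi(t^k+\langle\xi\rangle^{-2})^{-1}=2\xi\langle\xi\rangle^{-4}(t^k+\langle\xi\rangle^{-2})^{-2}$ together with $|2\xi\langle\xi\rangle^{-4}|\le2\langle\xi\rangle^{-3}$. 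For the step $l\mapsto l+1$ I differentiate the right-hand side term by term; each summand is a product $\langle\xi\rangle^{-l-2m}(t^k+\langle\xi\rangle^{-2})^{-m-1}$, and using $\partial_\xi\langle\xi\rangle^{-2}=-2\xi\langle\xi\rangle^{-4}$ together with $|\xi|\le\langle\xi\rangle$ one checks that differentiating the first factor yields a summand of the same type with $l$ replaced by $l+1$ and the index $m$ unchanged, while differentiating the second factor yields such a summand with $l$ replaced by $l+1$ and index raised to $m+1$; collecting the numerical coefficients closes the induction. Working in dimension $d=1$ is what keeps this elementary, since $\partial_\xi$ is an ordinary derivative and every factor has an explicit derivative. (Alternatively one can use the decomposition $\tfrac{1}{t^k+\langle\xi\rangle^{-2}}=t^{-k}-t^{-k}(t^k\xi^2+t^k+1)^{-1}$, so that for $l\ge1$ the derivative reduces to $\partial_\xi^l$ of a simple rational function with an explicit modulus, or Fa\`a di Bruno's formula with outer function $u\mapsto(t^k+u)^{-1}$ and inner function $\xi\mapsto\langle\xi\rangle^{-2}$, grouping terms by the number $m$ of inner-derivative factors.)

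\textbf{Step 2: integration in $t$.} Multiplying the Step~1 bound by $t^{k-j}$ and integrating over $[0,1]$, I apply Lemma~\ref{lem:integralinequality} to each summand with $\alpha=k-j$, $\beta=m+1$, $\delta=k$. Since $\alpha-\beta\delta=(k-j)-(m+1)k=-mk-j\le-2<-1$ for every $m\ge1$ (using $k\ge2$ and $j\in\{0,1\}$), the third branch of the lemma applies and gives $\int_0^1 t^{k-j}(t^k+\langle\xi\rangle^{-2})^{-m-1}\,dt\le(mk+j-1)^{-1}\langle\xi\rangle^{2(mk+j-1)/k}$. Multiplying by $\langle\xi\rangle^{-l-2m}$, the powers of $\langle\xi\rangle$ combine to $\langle\xi\rangle^{-l+2(j-1)/k}\le\langle\xi\rangle^{-l}$ because $j\le1$, and summing over $m=1,\dots,l$ one has $\sum_{m=1}^l(mk+j-1)^{-1}\le l$, since $mk+j-1\ge1$ for all such $m$. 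Together with the bound on $\tilde C_l$ this yields $C_l\le l\,\tilde C_l$, that is the claimed $C_l\le l\cdot2^{l+2}$.

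\textbf{Main obstacle.} The delicate point is not the structure of the argument but the explicit size of $\tilde C_l$ in Step~1: one must verify that the numerical coefficients produced by the $l\mapsto l+1$ induction do not grow faster than $2^{l+2}$, which is what matters here because, as stressed in this section, the precise magnitude of the constants ultimately enters the estimate on the order of the fundamental solution. The remaining ingredients — the choice of parameters in Lemma~\ref{lem:integralinequality}, checking $\alpha-\beta\delta<-1$, and absorbing $\langle\xi\rangle^{2(j-1)/k}\le1$ — are routine.
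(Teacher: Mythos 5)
Your argument is essentially the paper's own proof: the same inductive pointwise bound $\bigl|\partial_\xi^l (t^k+\langle\xi\rangle^{-2})^{-1}\bigr|\le\tilde C_l\sum_{m=1}^l\langle\xi\rangle^{-l-2m}(t^k+\langle\xi\rangle^{-2})^{-m-1}$, followed by the same application of Lemma~\ref{lem:integralinequality} with $\alpha=k-j$, $\beta=m+1$, $\delta=k$ (third branch, since $\alpha-\beta\delta=-mk-j<-1$), the same absorption of $\langle\xi\rangle^{2(j-1)/k}\le1$, and the same bound $\sum_{m=1}^l(mk+j-1)^{-1}\le l$, giving $C_l\le l\cdot\tilde C_l$. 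Like the paper, you leave the claim $\tilde C_l\le 2^{l+2}$ to an asserted induction rather than verifying the coefficient growth, and your observation that the statement is only meaningful for $l\ge1$ is fair, so there is nothing substantively different to report.
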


This lemma implies that $r_0$ in \eqref{eq:R0} is of order zero. Moreover, we immediately see that the third term on the right-hand side of \eqref{eq:N0} is of order $-1$ since $\zeta$ is of order $1$. The second symbol is of order zero, being the product of the two symbols of order zero $kt^{k-1}/(t^k + \langle \xi\rangle^{-2})$ and $|\xi|/\langle \xi\rangle$.

The following lemma is needed to investigate the first term on the right-hand side of \eqref{eq:N0}.

\begin{lemma}\label{lem:4.3}
  For all $l\in\N_0$,
  \begin{equation}\label{eq:lem43}
		\int_0^1 \bigg|\partial^l_\xi \frac{t^{k\rho}\xi}{\sqrt{1+t^k\langle\xi\rangle^2}}\bigg|dt \leq C_l^\prime\langle\xi\rangle^{-l}\log(1+\langle\xi\rangle),
\end{equation}
  for some constant $C_l^\prime<2l\cdot l!\cdot (2l-1)!!$.
\begin{proof}
First we see what happens at the level of $l=0$. Then the integrand in \eqref{eq:lem43} can be written as
\begin{equation}\label{eq:strangeterm}
	\frac{t^{k\rho} \xi}{\langle\xi\rangle\sqrt{t^k + \langle \xi\rangle^{-2}}} = \frac{\xi}{\langle\xi\rangle}\bigg(\frac{t^k}{t^k+\langle\xi\rangle^{-2}}\bigg)^\rho\frac{1}{(t^k + \langle\xi\rangle^{-2})^{1/k}},
\end{equation}
where, with the help of Lemma \ref{lem:4.2}, the first two terms can be seen to be symbols of order zero. The third term is a symbol in $S^{2/k}\subseteq S^1$, and using Lemma \ref{lem:integralinequality} with $\alpha=0$, $\beta=k^{-1}$ and $\delta=k$, we get
\begin{align}\label{eq:integralstrangeterm}
  \int_0^1 \frac{1}{(t^k + \langle\xi\rangle^{-2})^{1/k}}dt
  & \leq 1 + \log(\langle\xi\rangle^{2/k})\notag\\
  & \leq 1+\log\langle\xi\rangle.
\end{align}
The next step is to investigate the derivatives of the term in \eqref{eq:strangeterm}. We define
\begin{align*}
	\theta(t,\xi)
	:= \frac{t^{k\rho}}{\sqrt{1+t^k\langle\xi\rangle^2}}
	& = \frac{t^{k\rho}}{\langle\xi\rangle\sqrt{\langle\xi\rangle^{-2}+t^k}} \\
	& = \bigg(\frac{t^k}{\langle\xi\rangle^{-2} + t^k}\bigg)^\rho\frac{1}{(\langle\xi\rangle^{-2} + t^k)^{1/k}}\frac{1}{\langle\xi\rangle},
\end{align*}
and for all $l\in\N_0$
\begin{align*}
  \Theta_l(t,\xi)
  & := (-1)^l(2l-1)!! \frac{t^{k(\rho+l)}}{(1+t^k\langle\xi\rangle^2)^{(2l+1)/2}} \\
  & = (-1)^l(2l-1)!! \frac{t^{k(\rho+l)}}{\langle\xi\rangle^{2l+1}(\langle\xi\rangle^{-2}+t^k)^{(2l+1)/2}} \\
  & = (-1)^l(2l-1)!! \bigg(\frac{t^k}{\langle\xi\rangle^{-2}+t^k}\bigg)^{\rho+l}\frac{1}{(\langle\xi\rangle^{-2} + t^k)^{1/k}}\frac{1}{\langle\xi\rangle^{2l+1}},
\end{align*}
where $n!!$ denotes the odd factorial of an odd number, i.e.\ $n!! = n(n-2)\ldots3\cdot1$, and $(-1)!!:=1$. We have that $\Theta_0(t,\xi)=\theta(t,\xi)$ and
\[ \partial_\xi\Theta_l(t,\xi) = \Theta_{l+1}(t,\xi)\langle\xi\rangle\partial_\xi\langle\xi\rangle = \Theta_{l+1}(t,\xi)\xi. \]
Set furthermore
\begin{equation}\label{brackets} \tilde{\Theta}_l(t,\xi) := (-1)^l(2l-1)!! \bigg(\frac{t^k}{\langle\xi\rangle^{-2}+t^k}\bigg)^{\rho+l}\frac{1}{(\langle\xi\rangle^{-2} + t^k)^{1/k}}, 
\end{equation}
so $$\tilde{\Theta}_l(t,\xi)=\Theta_l(t,\xi)\langle\xi\rangle^{2l+1}.$$ Note that the term in the brackets in \eqref{brackets} is bounded by $1$.

In the sequel, we will deal with symbols of the form $p_{a_1,a_2}(\xi):=\xi^{a_1}\langle\xi\rangle^{a_2}\in S^{a_1+a_2}$, where $a_1,-a_2\in\N_0$. Note that the symbols $\tilde{\Theta}_l(t,\xi)$ and $p_{a_1,a_2}$ are bounded by $(2l-1)!!$ and $1$ respectively. With this preparation, we can evaluate the derivatives of the function on the left-hand side of \eqref{eq:strangeterm}, which is equal to $\theta(t,\xi)\xi$. Its derivatives are given by the following formula. Set $l^*:=l/2+1$ if $l$ is even and $l^*:=(l+1)/2+1$ if $l$ is odd, i.e.\ $l^*=\lceil l/2+1\rceil$. Then
\begin{align*}
  \partial^l_\xi\big(\theta(t,\xi)\xi\big) = \sum_{j=l-l^*+1}^l C_{l,j}\tilde{\Theta}_l(t,\xi)p_{2j-(l-1),2j+1}(t,\xi),
\end{align*}
where $C_{l,j}\in\N_0$ are constants which can be recursively computed. We have for instance $C_{l,l}=1$, $C_{l,l-1}=\sum_{j=1}^l j$, and all $C_{l,j}\leq 2 l!$. Therefore, using \eqref{eq:integralstrangeterm} we have that  its derivatives satisfy the following integral inequality
\begin{align}\label{eq:boundr0}
  \int_0^1 \big|\partial^l_\xi\big(\theta(t,\xi)\xi\big)\big| dt
  & \leq 2l\cdot l!\cdot (2l-1)!! \cdot\langle\xi\rangle^{-l}\int_0^1 \frac{1}{(t^k+\langle\xi\rangle)^{1/k}} dt \notag\\
  & \leq 2l\cdot l!\cdot (2l-1)!! \cdot\langle\xi\rangle^{-l}(1+\log\langle\xi\rangle) \notag\\
  & \leq C_l^\prime\langle\xi\rangle^{-l}\log(1+\langle\xi\rangle),
\end{align}
where $C_l^\prime := 2l\cdot l!\cdot (2l-1)!!$.
\end{proof}
\end{lemma}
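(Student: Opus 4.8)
The plan is to reduce the estimate \eqref{eq:lem43} for general $l$ to the case $l=0$, which is an immediate consequence of Lemma \ref{lem:integralinequality}, and then to differentiate in $\xi$ by means of a recursive bookkeeping identity that keeps the logarithmic factor and, crucially, the constants under control.

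First I would dispose of $l=0$. Writing
\[
	\frac{t^{k\rho}\xi}{\langle\xi\rangle\sqrt{t^k+\langle\xi\rangle^{-2}}}
	= \frac{\xi}{\langle\xi\rangle}\bigg(\frac{t^k}{t^k+\langle\xi\rangle^{-2}}\bigg)^{\rho}\frac{1}{(t^k+\langle\xi\rangle^{-2})^{1/k}},
\]
the first two factors are bounded by $1$ uniformly in $t\in[0,1]$ (for the middle one this is the same kind of estimate as in Lemma \ref{lem:4.2}, using $\rho=\tfrac12-\tfrac1k\ge0$ since $k\ge2$), so the integral is at most $\int_0^1(t^k+\langle\xi\rangle^{-2})^{-1/k}\,dt$, which by Lemma \ref{lem:integralinequality} with $\alpha=0$, $\beta=1/k$, $\delta=k$ — the critical case $\alpha-\beta\delta=-1$ — is bounded by $1+\log\langle\xi\rangle^{2/k}\le 1+\log\langle\xi\rangle$. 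Since $\langle\xi\rangle\ge1$, this is $\le C_0'\log(1+\langle\xi\rangle)$ for an absolute constant, giving \eqref{eq:lem43} for $l=0$.

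For general $l$ I would introduce the hierarchy $\theta=\Theta_0$,
\[
	\Theta_j(t,\xi):=(-1)^j(2j-1)!!\,\frac{t^{k(\rho+j)}}{(1+t^k\langle\xi\rangle^2)^{(2j+1)/2}},
\]
which, using $\partial_\xi\langle\xi\rangle^2=2\xi$, obeys the one-step recursion $\partial_\xi\Theta_j=\xi\,\Theta_{j+1}$. Writing $1+t^k\langle\xi\rangle^2=\langle\xi\rangle^2(t^k+\langle\xi\rangle^{-2})$ one sees that $\widetilde\Theta_j:=\langle\xi\rangle^{2j+1}\Theta_j$ equals $(-1)^j(2j-1)!!\,\bigl(t^k/(t^k+\langle\xi\rangle^{-2})\bigr)^{\rho+j}(t^k+\langle\xi\rangle^{-2})^{-1/k}$, and since $\rho+j\ge0$ the bracket is $\le1$, so $|\widetilde\Theta_j(t,\xi)|\le(2j-1)!!\,(t^k+\langle\xi\rangle^{-2})^{-1/k}$ uniformly in $t\in[0,1]$. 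Next I would differentiate $\theta(t,\xi)\xi$ a total of $l$ times: since $\partial_\xi^m\xi$ vanishes for $m\ge2$, only $\xi\,\partial_\xi^l\Theta_0$ and $l\,\partial_\xi^{l-1}\Theta_0$ survive, and iterating $\partial_\xi\Theta_j=\xi\Theta_{j+1}$ shows $\partial_\xi^m\Theta_0$ is a sum of terms $\xi^{2j-m}\Theta_j$ over $\lceil m/2\rceil\le j\le m$ with nonnegative integer coefficients. Substituting $\Theta_j=\langle\xi\rangle^{-(2j+1)}\widetilde\Theta_j$, each resulting term of $\partial_\xi^l(\theta\xi)$ has the shape (constant)$\cdot\widetilde\Theta_j\cdot\xi^{a_1}\langle\xi\rangle^{a_2}$ with $a_1\ge0$, $a_2\le0$ and $a_1+a_2=-l$; by induction the constants are explicitly computable — the coefficient of the top term $\widetilde\Theta_l$ is $1$, that of $\widetilde\Theta_{l-1}$ is $1+2+\dots+l$, and all of them are bounded by $2\cdot l!$ — so, summing over the $O(l)$ admissible values of $j$ and using $(2j-1)!!\le(2l-1)!!$ together with $|\xi^{a_1}\langle\xi\rangle^{a_2}|\le\langle\xi\rangle^{-l}$, one gets $|\partial_\xi^l(\theta(t,\xi)\xi)|\le 2l\cdot l!\cdot(2l-1)!!\,\langle\xi\rangle^{-l}(t^k+\langle\xi\rangle^{-2})^{-1/k}$. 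Integrating over $t\in[0,1]$ and invoking the $l=0$ bound $\int_0^1(t^k+\langle\xi\rangle^{-2})^{-1/k}\,dt\le 1+\log\langle\xi\rangle$ yields \eqref{eq:lem43} with $C_l'=2l\cdot l!\cdot(2l-1)!!$.

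The main obstacle I anticipate is exactly this combinatorial bookkeeping: one must check carefully, by induction on $l$, that every term produced in differentiating $\theta\xi$ really has the claimed form with the order relation $a_1+a_2=-l$ preserved at each step, and — more delicately — keep the coefficients bounded by $2\cdot l!$ so that the final constant is precisely $2l\cdot l!\cdot(2l-1)!!$. This sharpness is not a cosmetic issue: as stressed at the start of this section, the size of $C_l'$ (and hence the admissible size of the constant $c$ in \eqref{eq:equationweak}) will be decisive in the estimates that follow. Everything else — the recursion for $\Theta_j$, the uniform bound on $\widetilde\Theta_j$, and the two appeals to Lemma \ref{lem:integralinequality} — is routine.
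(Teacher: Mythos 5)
Your proposal is correct and follows essentially the same route as the paper's proof: the same three-factor decomposition and appeal to Lemma \ref{lem:integralinequality} (critical case $\alpha-\beta\delta=-1$) for $l=0$, the same hierarchy $\Theta_j$, $\tilde\Theta_j$ with the recursion $\partial_\xi\Theta_j=\xi\Theta_{j+1}$, and the same combinatorial bookkeeping (coefficients bounded by $2\cdot l!$, at most $l$ terms, $(2j-1)!!\le(2l-1)!!$) leading to the constant $2l\cdot l!\cdot(2l-1)!!$. The only cosmetic difference is that you make the Leibniz reduction of $\partial_\xi^l(\theta\xi)$ to $\xi\partial_\xi^l\Theta_0+l\,\partial_\xi^{l-1}\Theta_0$ explicit, which the paper leaves implicit.
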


This lemma implies that the first term on the right-hand side of \eqref{eq:N0} is of order $2/k$ and satisfies the integral inequality \eqref{eq:lem43}.

The next step is to diagonalize the system \eqref{eq:sec4system1} using the matrix in \eqref{emmij}, which in this case has the form
\[
\left(\begin{array}{cc}
1&m(t,\xi)
\\
0&1
\end{array}
\right)\]
with
\[ m(t,\xi):=\frac{\zeta(t,\xi)}{2\tilde{\lambda}(t,\xi)} = \frac{\sqrt{1+t^k\langle\xi\rangle^2}}{2\sqrt{\langle\xi\rangle^{-2}+t^k}|\xi|} = \frac{\langle\xi\rangle}{2|\xi|}. \]
Then, with the change of variable $W:=M^{-1}V$ the system becomes
\begin{equation}\label{syssec4}
	\begin{cases}
 		\tilde{\bfP}W(t,x) & = \tilde G(t,x),\ (t,x)\in(0,1]\times\R, \\
		W(0,x) & = 0,	\quad x\in\R,
	\end{cases}
\end{equation}
with $\tilde G=(mf,-f)^T$ and
\[ \tilde\bfP(t,x,D_x) = \begin{pmatrix} D_t & 0 \\ 0 & D_t\end{pmatrix} + \begin{pmatrix} \tilde{\lambda}(t,x,D_x) & 0 \\ 0 & -\tilde{\lambda}(t,x,D_x)\end{pmatrix} +\tilde\caR(t,x,D_x), \]
with
\begin{equation}\label{rsec4} \tilde\caR = \begin{pmatrix} R_0 - mN_0 & R_0m-mN_0m \\ N_0 & N_0m \end{pmatrix}. 
\end{equation}
Note that here $\tilde\caR$ is a matrix of PDOs of order $2/k$, not of order zero as in the strictly hyperbolic case, see \eqref{fatPgeneral}. 

It is important to remark here that in Section \ref{sec:appendixhyperbolic} we constructed the FIO $W_1$ in \eqref{eq:W_1} with the same order as $\tilde\caR$ and the behavior of $W_1$ was used to obtain the well-definedness of the symbol of $E_N$ in \eqref{eq:En} and its order.
More precisely,  it was crucial to have a uniform in time estimate of $\int_s^t W_1(t,\theta)d\theta$.

Here we want to follow the same ideas, so now we derive an integral estimate for the symbols of the four operators in the matrix \eqref{rsec4}. 
It can be easily checked that the symbols of the operators $mN_0$, $N_0m$ and $mN_0m$ satisfy the same integral estimate as in \eqref{eq:boundr0} with the same constants $C_l^\prime$, which was derived in Lemma \ref{lem:4.3}. For the latter estimate we consider, as in Definition \ref{CPsymbol}, the symbol only for $\xi$ outside the ball with radius $R>1$. The symbols of the other two operators $R_0$ and $R_0m$ satisfy the same integral inequality as in Lemma \ref{lem:4.2}, with the constant $C_l\cdot k$. The symbols of the four PDOs in $\tilde\caR$, denoted by $\tilde r_{i,j}$ for $i,j\in\{1,2\}$, satisfy therefore
\begin{align*}
\int_0^1 |\partial_\xi^l \tilde r_{i,j}(t,\xi)|dt &\leq \big(cC_l^\prime\log(1+\langle\xi\rangle)+C_l\cdot k\big)\langle\xi\rangle^{-l} 
\\
&\leq C_{k,l}^\prime\big(1+c\log(1+\langle\xi\rangle)\langle\xi\rangle^{-l},
\end{align*}
where $C_{k,l}^\prime\leq C_l^\prime + C_l\cdot k$. Then for $\xi$ outside a sufficiently large ball, whose radius may depend on $c$, $c\log(1+\langle\xi\rangle)$ dominates the constant $1$. 

Now, to construct the fundamental solution $E$ to the system \eqref{syssec4} we have to substitute the approximate characteristic roots by the true ones, rewriting the operator $\tilde\bfP$ in the form
\[ \tilde\bfP(t,x,D_x) = \begin{pmatrix} D_t & 0 \\ 0 & D_t\end{pmatrix} + \begin{pmatrix} \lambda(t,D_x) & 0 \\ 0 & -\lambda(t,D_x)\end{pmatrix} + \tilde{\tilde{{\caR}}}(t,x,D_x), \]
with 
\[ \tilde{\tilde{\caR}} =\tilde\caR+( \tilde{\lambda}-\lambda ) \begin{pmatrix}1 & 0 \\ 0 & -1 \end{pmatrix}. \]
We notice that
\[(\tilde\lambda-\lambda)(t,\xi)=\big(\sqrt{t^k + \langle\xi\rangle^{-2}} - t^{k/2}\big)|\xi|=\displaystyle\frac{\langle\xi\rangle^{-2}|\xi|}{\sqrt{t^k + \langle\xi\rangle^{-2}} + t^{k/2}}\]
is a symbol of order zero, and by Lemma \ref{lem:integralinequality} with $\alpha=0, \beta=1/2$, $\delta=k$ we have (since $k> 2$) 
\begin{align*}
  \int_0^1\vert(\tilde\lambda-\lambda)(t,\xi)\vert dt
  & \leq \displaystyle\frac{2}{k-2}\langle\xi\rangle^{1-(2k)^{-1}}\langle\xi\rangle^{-2}|\xi|\leq  \displaystyle\frac{2}{k-2};
\end{align*}
similarly, we can compute
\[ \int_0^1 \bigg|\partial^l_\xi\big(\tilde{\lambda}(t,\xi) - \lambda(t,\xi)\big)\bigg|dt \leq \frac{2}{k-2}C_{l}\langle\xi\rangle^{-l}, \]
with $C_{l}$ computed in Lemma \ref{lem:4.2}. This means that $\tilde{\tilde{\caR}}$ is again a matrix of PDOs of order $2/k$ and the symbols of its entries satisfy
 \begin{align}\nonumber
\int_0^1 |\partial_\xi^l \tilde{\tilde{ r}}_{i,j}(t,\xi)|dt &\leq \bigg(2cC_{k,l}^\prime\log(1+\langle\xi\rangle)+C_l\frac{2}{k-2}\bigg)\langle\xi\rangle^{-l} 
\\
\label{remainderintegral}
&\leq cC_{k,l}\log(1+\langle\xi\rangle)\langle\xi\rangle^{-l}, 
\end{align}
with $C_{k,l}\leq 2C_{k,l}^\prime + 2C_l/(k-2)$.

We define now $I_\phi, \phi_{\pm}, W_1, W_n, E_N, E$ as in Section \ref{sec:appendixhyperbolic}, with the difference that here we have $\tilde{\tilde{\caR}}$ satisfying \eqref{remainderintegral} (instead of $\tilde\caR$ of order zero as in \eqref{eq:W_1}). 

We have by Proposition \ref{prop:continuityFIO} that for every given $r>0$ there exist a constant $\tilde C_r>0$ and an integer $l_0\in\N_0$ sufficiently large such that when setting
\[ \delta_0 := \sup_{l\leq l_0} C_{k,l} = C_{k,l_0} \leq 2l_0\cdot l_0!\cdot(2l_0- 1)!!, \]
we get for every $\ell\leq\ell_0$ 
\small
\begin{align}
  & |\partial^\ell_\xi\sigma(E_N(t,s))(x,\xi)| \notag\\
  & \leq \sum_{n=1}^N\int_s^t\int_s^\theta\int_s^{\theta_1}\hskip-0.4cm\ldots\int_s^{\theta_{n-2}} \big|\partial^\ell_\xi\sigma\big(W_1(t,\theta_1)\ldots W_1(\theta_{n-2},\theta_{n-1})\big)(x,\xi)\big| d\theta_{n-1}\ldots d\theta_1d\theta \notag\\
  & \leq \langle \xi\rangle^{-\ell}(2C_{\ell_r})^{-1} \sum_{n=1}^N \frac{\left(2C_{\ell_r}c\tilde{C}_{r}\delta_0(1+\log\langle\xi\rangle)\right)^{n-1}}{(n-1)!}\leq C_{\ell_r,r}\langle\xi\rangle^{\delta-\ell},
\end{align}
\normalsize
where
$$\delta:=2C_{\ell_r}c\tilde{C}_{r}\delta_0,$$
with a new constant $C_{\ell_r,r}>0$. Thus the operator $E$, defined as in \eqref{eq:E} as the limit of $E_N$ as $N\to\infty$, is continuous from $H^{s+\delta}$ to $H^s$ for every $s\in\R$, i.e. it is an operator of order $\delta>0$. It's now easy to check that $E$ is the fundamental solution to the system
\[
	\begin{cases}
		\tilde\bfP E(t,s) = 0 & (t,s)\in\triangle_{\bar T}\\
		E(s,s)=\id & s\in [0,\bar T]
	\end{cases}
\]
for some $\bar{T}<1$. For more precise computations (in a general setting) we refer to \cite[Theorem 3.1]{AC1} or \cite[Theorem 3.1]{AC2}.

If we set $0<c<(2C_{\ell_r}\tilde{C}_r\delta_0)^{-1}$, then $0<\delta<1$. We compute the order of the fundamental solution to \eqref{eq:equationweak} reversing all the transformations from $u$ to $w$, i.e.
\[ u(t) = \zeta(t,D_x)^{-1}\big(w_1(t) + mw_2(t)\big). \]
So, by substituting the above expressions for $w$ into this equation we obtain that the fundamental solution to \eqref{eq:equationweak} is given by
\begin{align*}
    T_2(t,s)	& := \ii \zeta(t)^{-1}\big(e_{1,1}(t,s)m - e_{1,2}(t,s) + me_{2,1}(t,s)m - me_{2,2}(t,s)\big),
\end{align*}
where $e_{i,j}$ are the entries of $E$. Since $\zeta$ is an operator of order $1$ and $m$ is an operator of order $0$, we have that the fundamental solution $T_3(t,s)$ is an operator of order $\delta-1\in(-1,0)$. Therefore, a sufficient condition for the assumption \ref{ass:A1} on the spectral measure for the well-definedness of a random-field solution is
\[ \sup_{\eta\in\R} \int_\Rd \frac{1}{(1+|\xi+\eta|^2)^{1-\delta}}\mu(d\xi) < \infty. \]
The other conditions for the existence can be shown with similar arguments as the ones in the proof of Theorem \ref{thm:2ndorderhyperbolic}. We have so proved the following:
\begin{theorem} Let us consider the SPDE \eqref{eq:equationweak} If the spectral measure satisfies \eqref{eq:condition2} for some $\kappa<1$, then for some time horizon $0<\bar{T}\leq T$ 
the Schwartz kernel of the FIO $T_2$ satisfies Assumptions \ref{one!} and \ref{three!}, and therefore a random-field solution to the SPDE \eqref{eq:equationweak} is well-defined.
\end{theorem}

\end{document}